\newcommand{\bfZ}{{\bf  Z}}
\newcommand{\cB}{{\cal B}}
\newcommand{\cZ}{{\cal Z}}
\newcommand{\cL}{{\cal L}}
\newcommand{\cP}{{\cal P}}
\newcommand{\cF}{{\cal F}}
\newcommand{\cG}{{\cal G}}
\newcommand{\cN}{{\cal N}}
\newcommand{\cI}{{\cal I}}
\newcommand{\sur}{\twoheadrightarrow}
\newcommand{\cal}{\mathcal}
\newcommand{\N}{\cN}
\newcommand{\bu}{\bullet}
\newcommand{\I}{\cI}
\newcommand{\F}{\cF}
\newcommand{\Ftil}{\tilde{\cF}}
\newcommand{\G}{\cG}
\newcommand{\Gtil}{\tilde{\cG}}
\newcommand{\ftil}{\tilde f}
\newcommand{\jtil}{\tilde j}
\newcommand{\tilF}{\tilde{\cF}}
\newcommand{\Dpp}{{D^{p,\geq 0}(X)}}
\newcommand{\Dpm}{{D^{p,\leq 0}(X)}}
\newcommand{\cupl}{{\bigcup\limits}}
\newcommand{\pbar}{{\overline p}}
\newcommand{\Zet}{{\mathbb{Z}}}
\newcommand{\D}{{\mathbb{D}}}
\newcommand{\DC}{{\mathbb{DC}}}
\newcommand{\Ztop}{Z^{top}}
\newcommand{\Xtop}{X^{top}}
\newcommand{\tp}{^{top}} % space of points
\newcommand{\tpl}{^{top}_{lc}} %points defined over \kbar 
\newcommand{\ip}{{\mathbf i}} % embedding of a point of a scheme or a subset of a scheme; purely topological.
\newcommand{\md}{{}} %Category of modules
\newcommand{\uHom}{{\underline {Hom}}}
\newcommand{\uExt}{{\underline {Ext}}}
\newcommand{\cO}{{\cal O}}
\newcommand{\coh}{\mathop{\sf Coh}} %The category of coherent sheaves
\newcommand{\qcoh}{\mathop{\sf QCoh}}%The category of quasi-coherent sheaves
\newcommand{\ob}{\mathop{\mathrm{Ob}}}%The class of objects of category
\newcommand{\cone}{\mathop{\mathrm{Cone}}}%Cone of the morphism
\newcommand{\codim}{\mathop{\mathrm{codim}}}
\newcommand{\supp}{\mathop{\mathrm{supp}}}
\newcommand{\spec}{\mathop{\mathrm{Spec}}}
\newcommand{\Char}{\mathop{\mathrm{char}}}
\newcommand{\Hom}{\mathop{\mathrm{Hom}}\nolimits}
\newcommand{\Ext}{\mathop{\mathrm{Ext}}\nolimits}
\newcommand{\A}{{\sf A}} %Category A
\newcommand{\B}{{\sf B}} %Category B
\newcommand{\sG}{{\mathcal{G}}}		%Stack G
\newcommand{\sU}{{\mathcal{U}}}     %Stack U
\newcommand{\sV}{{\mathcal{V}}}     %Stack U
\newcommand{\sX}{{\mathcal{X}}}		%Stack X
\newcommand{\sY}{{\mathcal{Y}}}		%Stack Y
\newcommand{\sZ}{{\mathcal{Z}}}		%Stack Z
\newcommand{\kk}{{k}}				%Ground field
\newcommand{\kbar}{\overline\kk}	%Its algebraic closure
\newcommand{\imbed}{\hookrightarrow}
\numberwithin{equation}{section}
\newtheorem{Thm}{Theorem}
\newtheorem{Cor}[Thm]{Corollary}
\newtheorem{Lem}[Thm]{Lemma}
\newtheorem{Prop}[Thm]{Proposition}
\newtheorem{Claim}[Thm]{Claim}
\theoremstyle{definition}
\newtheorem{Def}[Thm]{Definition}
\theoremstyle{remark}
\newtheorem{Rem}[Thm]{Remark}
\newtheorem*{Rem*}{Remark}
\newtheorem{Ex}[Thm]{Example}
\numberwithin{Thm}{section}
\begin{document}

\title[]{Perverse Coherent Sheaves}
\author{Dmitry Arinkin}
\address{Department of Mathematics, University of North Carolina at Chapel Hill,
NC, USA}
\email{arinkin@email.unc.edu} 
\author{Roman Bezrukavnikov}
\address{Department of Mathematics, Massachusetts Institute of Technology, MA, USA}
\email{bezrukav@math.mit.edu}
\subjclass{Primary 18F20; Secondary 14A20, 14F05}
\keywords{Coherent perverse sheaves, coherent IC sheaves}

\begin{abstract}
This note 
introduces an
 analogue of perverse $t$-structure \cite{BBD} on the derived category
of coherent sheaves on an algebraic stack (subject to some mild technical
conditions).
% Noetherian scheme with a dualizing 
%complex.  The construction extends to the category
%of coherent sheaves equivariant under an action of an algebraic group, or,
% more generally,
%to the category of coherent sheaves on an algebraic stack.
Under additional assumptions  construction of  
coherent ``intersection cohomology''
sheaves is given.  Those latter assumptions are rather restrictive but hold
in some  examples of interest in representation theory.

%(such as sheaves
%on the nilpotent cone of a semi-simple group equivariant under the adjoint
% action)  

Similar results were obtained by Deligne (unpublished), Gabber \cite{Ga}
and Kashiwara \cite{Ka}.
\end{abstract}

\maketitle

\centerline{\em To Pierre Deligne with admiration.}
\section{Introduction} 
Let $X$ be a reasonable stratified topological space; or let
$X$ be a reasonable scheme, stratified by locally closed subschemes.
Let $D$ be the full subcategory in, respectively, derived category
of sheaves on $X$, or in  the derived category of \'etale sheaves  
on $X$,  consisting of complexes smooth along the stratification.

For an integer-valued function $p$ (perversity) on the set of strata
Beilinson, Bernstein, and Deligne \cite{BBD}
defined a  $t$-structure on the category $D$; the objects of corresponding
abelian category (core of the $t$-structure) are called perverse sheaves.

The question addressed in this note is whether an analogous construction
can be carried out for the derived category of coherent sheaves on a reasonable
scheme. Surprisingly, the answer is positive (with some modifications),
easy, and not widely known (although it was known to Deligne for a long time, 
see \cite{D}). 

Let us summarize the difference
between  the coherent case considered here, and
the constructible
case treated in \cite{BBD}.

First, in the coherent case
we can not work with  complexes ``smooth'' along a 
given stratification, because the corresponding subcategory in the derived category is not
a full triangulated subcategory. (If $f$ is a function whose divisor intersects
the open stratum, then the cone of the morphism $\cO \overset{f}{\to} \cO$
has singularity on the open stratum). This forces us to define perversity as a
function on the set of generic points of all 
irreducible subschemes, i.e. on the
topological space of a scheme. 

The second, more essential
 difference is that  in the derived category of coherent sheaves
the functor $j^*$ of  pull-back  under an open embedding $j$
does not have adjoint functors. 
Recall that in  constructible situation
the right adjoint to $j^*$ is the functor  $j_*$ of 
  direct image, and the left adjoint is the functor  $j_!$
of extension by zero.
In coherent set-up the functor $j_*$ is defined in the larger
 category of quasi-coherent sheaves (Ind-coherent sheaves),
 while $j_!$ is defined in the Grothendieck dual
 category (consisting 
of Pro-coherent sheaves) introduced in Deligne's appendix to \cite{H}.

It turns out, however, that in the proof 
 of the existence of perverse $t$-structure one can use
instead of the object  $j_!(\F)$ (where $j:U\imbed X$ is an open embedding)
any extension $\Ftil$ of $\F$ to $X$ such that
the restriction  of $\tilF$ to $X-U$ has no cohomology above certain degree
(depending on the perversity function).
 If the perversity function is
{\it monotone}  (see Definition \ref{mon} below) it is very easy to construct
such $\Ftil$. Applying the Grothendieck-Serre duality
to this construction, we get a substitute for $j_*(\F)$, which exists
if the perversity function is comonotone.
 Otherwise the proof is parallel to that in \cite{BBD}.

Thus the
 $t$-structure is constructed not for an
 arbitrary perversity function, but only for a monotone
and comonotone one. (In the topological situation one also needs this
condition to get a $t$-structure on the whole derived category
of constructible sheaves rather than on the category corresponding to a fixed
stratification.)

In \cite{D} Deligne used the
Grothendieck's Finiteness Theorem (\cite{SGA2}, VIII.2.1) to show that
 the formulas for 
$\tau ^p_{\leq 0}$, $\tau^p_{\geq 0}$ of \cite{BBD}, \emph{a priori} making sense
in a larger category containing $D^b(\coh)$, give in fact objects of $D^b(\coh)$,
provided the perversity function is monotone and comonotone
 (see also Remark \ref{Grfin}).

\medskip

The results on the existence of a ``perverse'' $t$-structure carry
over to the case of $G$-equivariant coherent sheaves, where $G$ is a
(reasonable) algebraic group acting on a (reasonable) scheme $X$. 
In this case  perversity $p(x )$ is assigned only to orbits
of $G$ (including ``generic orbits''), since
an equivariant sheaf is automatically smooth along the orbits.
 
More generally, the perverse $t$-structure can be constructed for
coherent sheaves on a (reasonable) algebraic stack $\sX$, given a
perversity defined on points of $\sX$. The case of $G$-equivariant 
coherent sheaves on $X$ corresponds to working with the stack
$\sX=X/G$. 

The general formalism in all three situations: sheaves on a scheme,
equivariant sheaves on a scheme, and sheaves on a stack --- is very similar,
to the extent that we found it easier to work with algebraic stacks
and treat the other situations as special cases. However, one construction
does not apply to (non-equivariant) sheaves on a scheme. 
Namely, the definition of the minimal (Goresky-MacPherson, or IC)
 extension functor $j_{!*}$ requires a {\it strictly}
monotone and comonotone perversity. Such perversity exists only if the dimensions
of adjacent points differ by at least two, which excludes schemes (other than finite schemes).
On the other hand, in equivariant settings, it is possible that the dimensions of adjacent
orbits differ by at least two, and a strictly monotone and comonotone perversity exists.

If the perversity is strictly monotone and comonotone, an analogue
of the usual description of irreducible perverse
sheaves as minimal extensions of local systems is valid,
and the core of the perverse $t$-structure is Artinian and Noetherian (in contrast with
the core of the standard $t$-structure). 
Some examples of this situation are given at the end of the paper.
%An example of this situation
%is provided by the nilpotent cone of a semi-simple algebraic group
%over a field of characteristic zero, equipped with the adjoint action
%(see remarks at the end of the note).

\medskip

A version of the main result in a restricted generality appeared
in the preprint \cite{prpr} by the second author. Later
related constructions were published by Gabber  \cite{Ga} and 
Kashiwara \cite{Ka}.
A similar result was known to Deligne 
 \cite{D} 
long before the date of \cite{prpr}.

\subsection*{Organization} 
In Section~\ref{secprel}, we prove some basic properties of coherent sheaves, and  
study dualizing complexes on stacks.

Section~\ref{s1} contains the definition of the perverse $t$-structure on a stack. The main
result of this section (Theorem~\ref{t_str}) verifies axioms of a $t$-structure.

In Section~\ref{IC}, we define the minimal extension functor (Theorem~\ref{JGM}). We then use it
to study irreducible perverse coherent sheaves (Proposition~\ref{irre}), and prove that the 
category of perverse coherent sheaves is Artinian and Noetherian (Corollary~\ref{Art}).  
As we already mentioned, these results require additional assumptions; in particular,
all results are empty in the case of (non-equivariant) coherent sheaves on a scheme.

\subsection*{Acknowledgements} 
We  are much obliged to  Pierre Deligne for valuable explanations,
comments on the text and  a
kind permission to use his unpublished results.

We thank 
 Alexander Beilinson, Victor Ginzburg,
and Dmitry Panyushev for discussions and references.

This paper grew out of the preprint
\cite{prpr} which was started in the spring of 1999 when the second author was a
member at the Institute for Advanced Study. He thanks IAS for excellent work
conditions and Leonid Positselski for his
participation in the  early stages of the work on \cite{prpr}.

 The first author is
a Sloan Research Fellow, and he
 is grateful to the Alfred P.~Sloan Foundation for the 
support. The work of the second author was partly supported 
by DARPA grant  HR0011-04-1-0031
and NSF grant DMS-0625234.

\section{Preliminaries}\label{secprel}
In this section we collect some results needed in the exposition.

\subsection{Quasi-coherent sheaves on stacks}
Let $\sX$ be an algebraic stack (an Artin stack). Suppose $\sX$ is Noetherian, and in particular, quasi-compact. 
Assume also that $\sX$ is semi-separated: that is, the diagonal morphism $\sX\to\sX\times\sX$ is affine.
Let us fix a presentation of $\sX$, that is, a surjective smooth morphism
$\pi:X\to\sX$, where $X$ is an algebraic space. 

\begin{Lem}\label{semiseparated} 
Let $\sX$ be an algebraic stack. The following conditions are equivalent:
\begin{enumerate}
\item\label{semiseparated:1} $\sX$ is quasi-compact and semi-separated.

\item\label{semiseparated:2} 
$\sX$ admits a presentation $\pi:X\to\sX$ such that $X$ is an affine scheme and 
$\pi$ is an affine morphism. 

\item\label{semiseparated:3} 
$\sX$ admits a presentation $\pi:X\to\sX$ such that $X$ is a quasi-compact semi-separated algebraic space and $\pi$ is an affine morphism.
\end{enumerate}
\end{Lem}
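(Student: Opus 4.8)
The plan is to establish the cycle of implications $(\ref{semiseparated:2})\Rightarrow(\ref{semiseparated:3})\Rightarrow(\ref{semiseparated:1})\Rightarrow(\ref{semiseparated:2})$. The first implication is essentially trivial: an affine scheme is in particular a quasi-compact semi-separated algebraic space, so a presentation as in (\ref{semiseparated:2}) is automatically one as in (\ref{semiseparated:3}). For $(\ref{semiseparated:3})\Rightarrow(\ref{semiseparated:1})$, suppose $\pi:X\to\sX$ is an affine presentation with $X$ quasi-compact and semi-separated. Quasi-compactness of $\sX$ is immediate since $\pi$ is surjective and $X$ is quasi-compact. For semi-separatedness, I would use the standard descent/cancellation argument: the composite $X\times_{\sX} X\to X\times X$ fits into a Cartesian-type diagram with the diagonal $\Delta_\sX:\sX\to\sX\times\sX$, and since $\pi\times\pi:X\times X\to\sX\times\sX$ is smooth and surjective (hence an fppf cover), affineness of $\Delta_\sX$ may be checked after this base change, where it becomes the morphism $X\times_\sX X\to X\times X$. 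Now $X\times_\sX X$ is an algebraic space, and the two projections to $X$ are affine (base change of $\pi$); composing, $X\times_\sX X\to X$ is affine, and since $X$ is semi-separated, $X\to X\times X$ is affine, so the composite $X\times_\sX X\to X\times X$ is affine, as desired.

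The substantive implication is $(\ref{semiseparated:1})\Rightarrow(\ref{semiseparated:2})$, and this is where I expect the main work to lie. Starting from the fixed presentation $\pi:X\to\sX$ with $X$ an algebraic space, I would proceed in two stages. First, replace $X$ by an affine scheme: since $\sX$ is quasi-compact one may shrink to a presentation by a scheme, and then, because $X$ is quasi-compact, cover it by finitely many affine opens $U_i$ and set $X' = \coprod_i U_i$; the induced map $X'\to\sX$ is still smooth and surjective, and $X'$ is an affine scheme. The issue is that this new $\pi':X'\to\sX$ need not be an affine morphism. The second stage addresses this: I claim that semi-separatedness of $\sX$ forces $\pi'$ to be affine after all, or can be arranged to be so. The key point is that $\pi'$ is affine if and only if $X'\times_\sX Y\to Y$ is affine for one (equivalently any) smooth surjection $Y\to\sX$; taking $Y = X'$, this is the question of whether $X'\times_\sX X'\to X'$ is affine. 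Since $\sX$ is semi-separated, $\Delta_\sX$ is affine, and $X'\times_\sX X'$ is the fiber product of $X'\times X'\to\sX\times\sX$ with $\Delta_\sX$, hence $X'\times_\sX X'\to X'\times X'$ is affine; composing with the projection $X'\times X'\to X'$ (affine, as $X'$ is an affine scheme) shows $X'\times_\sX X'\to X'$ is affine. Therefore $\pi'$ is affine.

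The main obstacle, as indicated, is the passage from an arbitrary presentation to an affine-scheme presentation while controlling the affineness of the structure morphism; the crucial leverage is the equivalence ``$\pi$ affine $\iff$ $\Delta_\sX$ affine'' once the source is an affine scheme, which is exactly the hypothesis of semi-separatedness. One technical subtlety I would need to be careful about is the initial reduction from an algebraic-space presentation to a scheme presentation: this uses that $X$ is itself quasi-compact (inherited from $\sX$ being quasi-compact via smoothness and surjectivity of the original $\pi$) together with the fact that an algebraic space has an \'etale cover by a scheme, so one can first pull back along such a cover and then extract a quasi-compact piece. With these reductions in place the argument is a routine chain of base-change and cancellation properties for affine morphisms.
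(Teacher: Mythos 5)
Your treatment of \eqref{semiseparated:2}$\Rightarrow$\eqref{semiseparated:3} and of \eqref{semiseparated:1}$\Rightarrow$\eqref{semiseparated:2} is essentially the paper's: the paper likewise passes to an \'etale cover to make $X$ affine and then observes that $X\times_\sX X=(X\times X)\times_{(\sX\times\sX)}\sX$ is an affine scheme, which is exactly your ``second stage'' computation, so $\pi$ is affine.

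There is, however, a genuine flaw in your \eqref{semiseparated:3}$\Rightarrow$\eqref{semiseparated:1}. You correctly reduce semi-separatedness of $\sX$ to affineness of the canonical morphism $X\times_\sX X\to X\times X$ induced by the \emph{pair} of projections. But the composite you then exhibit is a different morphism: composing the affine projection $\mathrm{pr}_1\colon X\times_\sX X\to X$ with the affine diagonal $X\to X\times X$ gives the map $w\mapsto(\mathrm{pr}_1(w),\mathrm{pr}_1(w))$, not $w\mapsto(\mathrm{pr}_1(w),\mathrm{pr}_2(w))$; you have therefore proved affineness of the wrong map. The paper's factorization is
$$X\times_\sX X\to(X\times_\sX X)\times(X\times_\sX X)\to X\times X,$$
namely the diagonal of $X\times_\sX X$ followed by $\mathrm{pr}_1\times\mathrm{pr}_2$. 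The second arrow is affine because each projection is a base change of $\pi$; the first is affine because $X\times_\sX X$ is itself semi-separated, being affine over the semi-separated space $X$ --- this inheritance of semi-separatedness along affine morphisms is the ingredient missing from your write-up. (An equivalent fix: factor the canonical map as the graph of $\mathrm{pr}_2$, which is a base change of $\Delta_X$ and hence affine, followed by $\mathrm{pr}_1\times\mathrm{id}_X$.) With this correction the rest of your argument coincides with the paper's.
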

\begin{proof}
\eqref{semiseparated:1}$\Rightarrow$\eqref{semiseparated:2}. Let $\pi:X\to\sX$ be a presentation
of $\sX$. Note that $X$ is quasi-compact, so passing to its \'etale cover, we may assume that
$X$ is an affine scheme. But then
$$X\times_\sX X=(X\times X)\times_{(\sX\times\sX)}\sX$$
is an affine scheme; therefore, $\pi$ is an affine morphism.

\eqref{semiseparated:2}$\Rightarrow$\eqref{semiseparated:3} is obvious.

\eqref{semiseparated:3}$\Rightarrow$\eqref{semiseparated:1}. Clearly, $\sX$ is quasi-compact; let us
prove it is semi-separated. The morphism
$$X\times_\sX X\to X$$ is affine, because it is obtained from $\pi$ by a base change. Since $X$ is
semi-separated, so is $X\times_\sX X$. Therefore, the composition
$$X\times_\sX X\to(X\times_\sX X)\times(X\times_\sX X)\to X\times X$$
is affine. It remains to notice that the composition is obtained from the diagonal
$\sX\to\sX\times\sX$ by a base change.
\end{proof}

\begin{Rem}
\label{Gorenstein}
To simplify the exposition, we only consider presentations $\pi:\sX\to X$ where
$X$ is a scheme from now on. This assumption allows us to avoid a separate discussion of 
perverse coherent sheaves on algebraic spaces.

Also, we do not need smoothness of presentations: it is enough to assume that 
$\pi:X\to\sX$ is a faithfully flat Gorenstein morphism of finite type. Recall that a flat morphism of finite type $\pi:X\to Y$ between locally Noetherian schemes 
is \emph{Gorenstein} if its fibers are Gorenstein schemes; equivalently, $\pi^!\cO_Y$ is an invertible sheaf (concentrated in a single cohomological dimension that need not be zero), see \cite[Exercise~V.9.7]{H}. The class of Gorenstein morphisms is local in smooth topology; this allows
to define Gorenstein morphisms between locally Noetherian stacks.

Let us agree that a \emph{presentation}
of a stack is a representable faithfully flat Gorenstein morphism of finite type $\pi:X\to\sX$,
where $X$ is a scheme. 
\end{Rem} 

Denote by $\coh(\sX)\subset\qcoh(\sX)$ the categories
of coherent and quasi-coherent sheaves on $\sX$, respectively. The presentation $\pi:X\to\sX$
defines a simplicial algebraic space
$X_\bullet$ (the \emph{coskeleton of $\pi$}): $X_i$
is the fiber product of $i+1$ copies of $X$ over $\sX$ ($i\ge 0$). We can interpret
quasi-coherent sheaves on $\sX$ as cartesian quasi-coherent sheaves on $X_\bullet$.

\begin{Ex} \label{equivariant}
An important example is the quotient stack $\sX=X/G$. Let us assume that $X$
is a semi-separated Noetherian scheme and
 $G$ is a flat finitely presented affine group scheme
(over some base scheme) acting on $X$. Then $\sX$ is an algebraic stack by the Artin criterion (see Theorem~10.1, Corollary~10.6 in \cite{La}); 
it is Noetherian and semi-separated. The natural morphism $X\to\sX$ is a 
presentation of $X$ if $G$ is smooth (or at least Gorenstein, 
since we consider presentations in the sense of Remark~\ref{Gorenstein}).
 In this example, quasi-coherent sheaves on $\sX$ are simply $G$-equivariant 
quasi-coherent sheaves on $X$.
\end{Ex}

\begin{Rem*}
Example~\ref{equivariant} demonstrates our main reason for working with Gorenstein presentations,
rather than smooth presentations as in \cite{La}. Namely, there are interesting group schemes $G$ that are Gorenstein, but not smooth:
for example, $G$ could be a non-reduced group scheme over a field of non-zero characteristic. By our definition, the morphism $X\to\sX$
is still a presentation of $\sX=X/G$ for such $G$.
\end{Rem*}

Denote by $D_{qcoh}(\sX)$ the derived category of quasi-coherent sheaves on $\sX$. 
Recall its definition (\cite{La}).

\begin{Def} \label{derived}
Consider the lisse-\'etale topology on $\sX$. (The underlying category is the category of 
morphisms $u:U\to\sX$, where $U$ is an algebraic space smooth over $\sX$; coverings are surjective \'etale morphisms.)
This site is equipped with the sheaf of rings $\cO_\sX$. Let $D(\cO_\sX)$ be the derived category of complexes of $\cO_\sX$-modules. 
Then $D_{qcoh}(\sX)\subset D(\cO_\sX)$ is the full subcategory formed by complexes whose cohomology sheaves are quasi-coherent, and
$D_{coh}(\sX)\subset D_{qcoh}(\sX)$ is the full subcategory of complexes with coherent cohomology. The notation $D^+_{coh}(\sX)$, $D^b_{qcoh}(\sX)$, and so on
is self-explanatory.
\end{Def}

$D_{qcoh}(\sX)$ can be described more explicitly in terms of the presentation 
$\pi:X\to\sX$. Let $X_\bu$ be the corresponding simplicial algebraic space. (More generally,
one can consider flat hypercovers $X_\bu\to\sX$.)

Consider the derived category of quasi-coherent sheaves on $X_\bu$ (cf. \cite[Appendix B]{BL}). We denote by
$D(\cO_{X_\bullet})$ the derived category of complexes of simplicial $\cO$-modules on
$X_\bullet$. The natural functor from $\qcoh(\sX)$ to the category of simplicial $\cO_{X_\bullet}$-modules is fully faithful. It provides an equivalence between $\qcoh(\sX)$ and the 
category of cartesian quasi-coherent sheaves on $X_\bu$. 
Define $D_{qcoh,cart}(X_\bullet)\subset D(\cO_{X_\bullet})$ to be the full subcategory of complexes
whose cohomology objects are cartesian quasi-coherent sheaves.

\begin{Claim} \label{topology}
\begin{enumerate}
\item\label{topology1} In Definition~\ref{derived}, the lisse-\'etale topology can be replaced by 
the smooth topology or the fppf topology,  or a `hybrid topology' such as
`fppf-Zariski'. If $\sX$ is a Deligne-Mumford stack (resp. scheme), one can work with
the \'etale topology (resp. Zariski topology) on $\sX$. In all these cases, the corresponding
versions of $D_{qcoh}(\sX)$ are naturally equivalent.
\item\label{topology2} Similarly, in the definition of $D_{qcoh,cart}(X_\bullet)$, one can use on $X_i$'s any
of the usual topologies: smooth, flat, \'etale, or Zariski (assuming $X_i$'s are schemes).
The corresponding derived categories are naturally equivalent. 
\end{enumerate}
\end{Claim}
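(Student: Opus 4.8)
The plan is to show that all the topologies in question produce the same category by comparing each of them, through an appropriate comparison morphism of ringed topoi, to a single reference, and then checking that the comparison functors become mutually quasi-inverse once one restricts to complexes with quasi-coherent cohomology. The whole argument rests on two classical facts about a Noetherian (or just quasi-compact quasi-separated) scheme $Y$ and a topology $\tau$ on $Y$ that is finer than the Zariski one (e.g.\ smooth, fppf, \'etale, or a hybrid topology): (i) pullback along the canonical morphism $\epsilon_\tau\colon Y_\tau\to Y_{Zar}$ induces an equivalence $\qcoh(Y_{Zar})\iso\qcoh(Y_\tau)$ (faithfully flat descent for quasi-coherent modules); and (ii) if $Y$ is affine and $\F\in\qcoh(Y)$ then $H^i_\tau(Y,\F)=0$ for $i>0$. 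Both are classical, due to Grothendieck (see \cite{La} and the references therein); for the ``hybrid'' topologies the proof of (ii) is unchanged, since it only uses that a finite affine cover of an affine scheme has vanishing higher \v{C}ech cohomology with quasi-coherent coefficients.

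The first genuine step is the case of a single scheme $Y$: I would check that the adjoint pair $(\epsilon_\tau^*,R\epsilon_{\tau*})$ restricts to mutually quasi-inverse equivalences $D_{qcoh}(Y_{Zar})\simeq D_{qcoh}(Y_\tau)$. The functor $\epsilon_\tau^*$ is exact; by (ii) the unit $\mathrm{id}\to R\epsilon_{\tau*}\epsilon_\tau^*$ is an isomorphism on $\qcoh(Y)$, a standard truncation (``way-out'') argument promotes this to all of $D^b_{qcoh}(Y)$, so that $\epsilon_\tau^*$ is fully faithful there; and essential surjectivity onto $D_{qcoh}(Y_\tau)$ follows from (i) applied to cohomology sheaves, again by induction on the cohomological amplitude. (Equivalently, one checks that the counit $\epsilon_\tau^*R\epsilon_{\tau*}\to\mathrm{id}$ is an isomorphism on $D_{qcoh}(Y_\tau)$.)

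Next comes the globalization. Part~\eqref{topology2} follows by running the single-scheme case on each $X_i$: the morphisms $\epsilon_\tau$ on the $X_i$ are compatible with the face and degeneracy maps and so give a morphism of simplicial ringed topoi; $\epsilon_\tau^*$ commutes with flat base change and hence carries cartesian complexes to cartesian complexes, $R\epsilon_{\tau*}$ of a simplicial quasi-coherent complex is computed degreewise by (ii), and the resulting functors assemble into the asserted equivalence (see \cite[Appendix~B]{BL}). For part~\eqref{topology1} there are two moves. First, the lisse-\'etale site and the ``lisse-smooth'' site of $\sX$ --- the same underlying category of smooth $\sX$-spaces, with \'etale respectively smooth surjective covers --- define the \emph{same} topos, because a smooth surjection admits a section \'etale-locally on the base, so the two pretopologies generate the same covering sieves; hence even $D(\cO_\sX)$ is literally unchanged. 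Second, for each of the remaining topologies $\tau$ --- smooth, fppf, a hybrid, the \'etale topology when $\sX$ is Deligne-Mumford, the Zariski topology when $\sX$ is a scheme --- I would repeat the argument of the previous paragraph for the morphism $\epsilon_\tau\colon\sX_\tau\to\sX_{sm}$, using fppf descent for $\qcoh(\sX)$ (which is insensitive to $\tau$) in place of (i), and the vanishing $H^i_\tau(U,\F)=0$ for $\F$ quasi-coherent and $U$ affine and smooth over $\sX$ in place of (ii).

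The point that requires care --- and what I would flag as the main obstacle --- is that the lisse-\'etale site is not stable under fiber products, so there is no naive morphism of topoi from, say, the big fppf site of $\sX$ to its lisse-\'etale site, and one must be careful about the existence and the direction of $\epsilon_\tau^*$ and $R\epsilon_{\tau*}$. This turns out to be harmless here: our functors are evaluated only on quasi-coherent objects, whose cohomology is computed on an affine atlas $\pi\colon X\to\sX$, where the statement is precisely the single-scheme case of the second paragraph, and the passage back to $\sX$ is faithfully flat descent along $\pi$. (If one insists on the unbounded derived category rather than $D^b$, the step from $D^+$ to $D$ is the usual homotopy-limit argument.)
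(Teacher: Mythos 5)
Your argument is correct in outline but takes a genuinely different route from the paper, which disposes of both parts of the Claim by a single citation: the natural comparison functors between the various sites (direct and inverse image along the continuous maps) are equivalences by \cite[Theorem~2.2.3]{LO}, with the remark that an ordinary site may be viewed as a constant simplicial site. You instead rebuild the comparison by hand from two classical inputs --- faithfully flat descent for quasi-coherent modules and the vanishing of higher $\tau$-cohomology of quasi-coherent sheaves on affines --- plus a way-out argument and degreewise globalization over $X_\bu$. Your route is more self-contained and makes visible \emph{why} the statement holds; your observation that the lisse-\'etale and lisse-smooth pretopologies generate the same topos, and your flagging of the non-functoriality of the lisse-\'etale site, are both correct and are exactly the points one must worry about. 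What the paper's citation buys, and where your sketch is thinnest, is the unbounded case: Definition~\ref{derived} works with the full derived category $D(\cO_\sX)$, and the step you dismiss as ``the usual homotopy-limit argument'' requires knowing that every object of $D_{qcoh}$ is recovered as $\mathrm{holim}_n\,\tau_{\ge -n}$ of its truncations, i.e.\ a left-completeness statement for the relevant ringed topoi. This is not automatic for an arbitrary topos; it holds here because the sites in question are generated by objects (affine schemes flat or smooth over $\sX$) on which quasi-coherent cohomology vanishes in positive degrees, and verifying precisely this generating-family hypothesis is the content of the hypotheses of \cite[Theorem~2.2.3]{LO}. If you make that verification explicit, your proof is complete and arguably more informative than the paper's.
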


\begin{Claim} The natural functor $$D_{qcoh}(\sX)\to D_{qcoh,cart}(X_\bu)$$ is an equivalence
of categories. 
\label{simplicial}
\end{Claim}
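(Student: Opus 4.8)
The plan is to establish the equivalence $D_{qcoh}(\sX)\iso D_{qcoh,cart}(X_\bu)$ by constructing adjoint functors in both directions and checking that unit and counit are isomorphisms. The restriction functor $\pi_\bu^*:D(\cO_\sX)\to D(\cO_{X_\bu})$, obtained by pulling back along the smooth maps $X_i\to\sX$, sends $D_{qcoh}(\sX)$ into $D_{qcoh,cart}(X_\bu)$ because pullback along a flat morphism is exact and preserves (cartesian) quasi-coherence. The candidate inverse is the totalization/descent functor $R\pi_{\bu*}$, assembling the (derived) pushforwards from the simplicial levels via a homotopy limit (a Čech-type totalization of the cosimplicial object $R\pi_{i*}(-)$). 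First I would verify these functors are well-defined on the relevant subcategories and are adjoint.

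The core of the argument is a \emph{cohomological descent} statement: for $\F\in D_{qcoh}(\sX)$ the natural map $\F\to R\pi_{\bu*}\pi_\bu^*\F$ is an isomorphism, and conversely for a cartesian complex $\G_\bu$ on $X_\bu$ the map $\pi_\bu^* R\pi_{\bu*}\G_\bu\to\G_\bu$ is an isomorphism. I would reduce the first to the case of a single quasi-coherent sheaf in degree zero by a truncation/spectral-sequence argument (using that both functors are bounded-below in a suitable sense and commute with the relevant truncations on $D^+$, then bootstrapping to unbounded complexes via homotopy limits since $\sX$ is Noetherian of finite cohomological dimension for quasi-coherent sheaves — here quasi-compactness and semi-separatedness from Lemma~\ref{semiseparated}, together with the affineness of $\pi$ in the presentation of Lemma~\ref{semiseparated}\eqref{semiseparated:2}, are what make the pushforwards well-behaved and the Čech resolution converge). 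For a single sheaf, faithfully flat descent for quasi-coherent sheaves (Claim~\ref{topology} lets us pick the flat or even affine topology so that $\pi$ is affine, hence $R\pi_{i*}$ has no higher cohomology) identifies $\F$ with the equalizer of $\pi_{0*}\pi_0^*\F\rightrightarrows\pi_{1*}\pi_1^*\F$, and the higher terms of the cosimplicial complex contribute nothing — this is the classical statement that the Čech-to-derived-functor spectral sequence for an affine cover degenerates.

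The second isomorphism, $\pi_\bu^*R\pi_{\bu*}\G_\bu\iso\G_\bu$ for cartesian $\G_\bu$, I would deduce from the base-change isomorphism along the squares
$$
\begin{array}{ccc}
X_{i+1} & \to & X_i\\
\downarrow & & \downarrow\\
X_i & \to & \sX
\end{array}
$$
(these are cartesian by the very definition of the coskeleton, and the vertical maps are flat, so flat base change applies), combined with the cartesian condition on $\G_\bu$ which says precisely that the transition maps of $\G_\bu$ are isomorphisms; this forces the totalization to collapse. Finally I would record that the two composites of the constructed functors are naturally isomorphic to the identities, which is exactly the content of the two displayed isomorphisms, so the functors are mutually inverse equivalences; full faithfulness alone would already follow from the first isomorphism together with adjunction, and essential surjectivity from the second.

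The step I expect to be the main obstacle is the passage from bounded-below complexes to \emph{unbounded} complexes in $D_{qcoh}(\sX)$: the homotopy limit defining $R\pi_{\bu*}$ need not commute with arbitrary truncations, so one must argue that the relevant tower of truncations is eventually constant in each cohomological degree, which relies on a uniform bound on the cohomological dimension of $\pi_i$ (available because $\pi$ is affine after choosing the topology as in Claim~\ref{topology}\eqref{topology2}, whence each $X_i\to\sX$ is affine and $R\pi_{i*}$ is exact). Once that finiteness is in hand, a standard argument with $\lim^1$-vanishing on the relevant Mittag--Leffler systems completes the reduction, and the rest is the classical affine descent computation.
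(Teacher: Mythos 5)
Your proposal takes a genuinely different route from the paper: the paper proves this claim in one line by invoking Laszlo--Olsson's extension of cohomological descent to unbounded derived categories (Theorem~2.2.3 of \cite{LO}), whereas you reprove the needed special case directly, by exhibiting the adjoint pair $(\pi_\bu^*,R\pi_{\bu*})$, checking the unit on a single quasi-coherent sheaf via the exactness of the Amitsur/\v Cech complex of an affine faithfully flat cover, checking the counit via flat base change together with the cartesian condition, and then bootstrapping to unbounded complexes by a homotopy-limit argument. What the paper's citation buys is precisely the hard part of your last step: the lisse-\'etale topos of a stack has infinite cohomological dimension in general, so $D(\cO_\sX)$ is not obviously left-complete, and the identification $\F\simeq\mathrm{holim}\,\tau_{\ge -n}\F$ together with the commutation of $R\pi_{\bu*}$ with these limits is exactly the content of the uniform-cohomological-dimension hypotheses of \cite[Theorem~2.2.3]{LO}. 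You correctly isolate this as the main obstacle and propose the right remedy (exactness of the pushforwards for an affine presentation, degeneration of the \v Cech-to-derived-functor spectral sequence, $\varprojlim^1$-vanishing for the resulting Mittag--Leffler towers), so the outline is sound; it amounts to a self-contained proof of the special case of \cite{LO} that the paper needs.

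Two points would need attention before this could replace the paper's argument. First, the claim is stated for the fixed presentation $\pi:X\to\sX$, which is only assumed smooth (or Gorenstein) surjective, not affine; your reduction via Lemma~\ref{semiseparated}\eqref{semiseparated:2} proves the claim for \emph{that} affine presentation, and you would still need to compare two presentations (say via the bisimplicial object built from both) to recover the statement for the given $X_\bu$. Second, Claim~\ref{topology} changes the topology on a fixed site, not the presentation, so it cannot by itself ``make $\pi$ affine''; the affineness comes from Lemma~\ref{semiseparated}, and even then the derived pushforward into the lisse-\'etale topos of $\sX$ is not literally the underived quasi-coherent pushforward --- one must first pass to a convenient topology (which is where Claim~\ref{topology} actually enters) before asserting that $R\pi_{i*}$ is exact on quasi-coherent sheaves.
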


\begin{Claim} $D_{qcoh}^+(\sX)$ is identified with the bounded below derived category of the abelian
category $\qcoh(\sX)$. 
\label{derivedqcoh}
\end{Claim}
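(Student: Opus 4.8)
The claim is that the natural functor $\epsilon\colon D^+(\qcoh(\sX))\to D^+_{qcoh}(\sX)$ --- from the bounded below derived category of the abelian category $\qcoh(\sX)$ to the full subcategory of $D(\cO_\sX)$ spanned by complexes with quasi-coherent cohomology --- is an equivalence. I would prove that $\epsilon$ is fully faithful and essentially surjective, reducing both statements to the classical case of affine schemes by faithfully flat descent along a presentation. At the outset I record that, $\sX$ being Noetherian, $\coh(\sX)$ is essentially small and $\qcoh(\sX)=\mathop{\mathrm{Ind}}(\coh(\sX))$ is a Grothendieck abelian category; in particular $\qcoh(\sX)$ is closed under kernels and cokernels in the category of $\cO_\sX$-modules (so the inclusion is exact and induces $\epsilon$) and it has enough injectives, so bounded below complexes over it admit injective resolutions and $\Hom$-groups in $D^+(\qcoh(\sX))$ are computed by such resolutions.

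By Lemma~\ref{semiseparated} we may fix a presentation $\pi\colon X\to\sX$ with $X$ an affine scheme and $\pi$ affine --- hence, being a presentation, faithfully flat; let $X_\bu$ be the associated simplicial scheme, so that every $X_i$ is affine, and recall (Claim~\ref{topology}) that cohomology on $\sX$ may be computed in the fppf topology, for which $\pi$ is a covering. The crux of the matter, and the step I expect to be the main obstacle, is a comparison between $\mathrm{Ext}$'s computed in $\cO_\sX$-modules and in $\qcoh(\sX)$: for $I$ an injective object of $\qcoh(\sX)$ and $\cF\in\qcoh(\sX)$ one must show $\Ext^k_{\cO_\sX}(\cF,I)=0$ for $k>0$ (the identification $\Hom_{\cO_\sX}(\cF,I)=\Hom_{\qcoh(\sX)}(\cF,I)$ being automatic). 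On an affine Noetherian scheme the corresponding statement is classical: it rests on Serre's vanishing of higher cohomology of quasi-coherent sheaves on an affine, together with the good behaviour of quasi-coherent injectives under restriction to affine opens. The task is to transport it to $\sX$ by cohomological descent along $\pi$, i.e.\ through the cosimplicial complex of $\Ext$-groups over the affine schemes $X_i$; equivalently, one can run the argument through the coherator --- the right adjoint to the inclusion of $\qcoh(\sX)$ into $\cO_\sX$-modules, which exists since $\qcoh(\sX)$ is a Grothendieck category closed under colimits in $\cO_\sX$-modules --- and check that its derived functor, restricted to $D^+_{qcoh}(\sX)$, is quasi-inverse to $\epsilon$ by testing on the presentation. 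In either formulation the delicate point is the compatibility of these global constructions with restriction to $X_\bu$; this is precisely where the well-known subtleties of the lisse-\'etale topos must be handled with care, and I would follow \cite{La} here.

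Granting the $\Ext$-vanishing, full faithfulness is formal: representing $N\in D^+(\qcoh(\sX))$ by a bounded below complex $I^\bu$ of injectives of $\qcoh(\sX)$, the vanishing shows that $I^\bu$, viewed in $D(\cO_\sX)$, receives no nonzero maps in $K(\cO_\sX)$ from bounded below acyclic complexes of quasi-coherent sheaves, so that for any $M\in D^+(\qcoh(\sX))$, represented by a bounded below complex of quasi-coherent sheaves,
$$\Hom_{D(\cO_\sX)}(\epsilon M,\epsilon N)=\Hom_{K(\cO_\sX)}(\epsilon M,I^\bu)=\Hom_{K(\qcoh(\sX))}(M,I^\bu)=\Hom_{D^+(\qcoh(\sX))}(M,N).$$

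For essential surjectivity I would argue by a Postnikov/truncation induction. Given $K\in D^+_{qcoh}(\sX)$ with $H^i(K)=0$ for $i<a$, put $M_a=H^a(K)[-a]$, and inductively build bounded below complexes $M_n$ of quasi-coherent sheaves together with isomorphisms $\epsilon M_n\cong\tau^{\le n}K$ compatible with the transition maps: at the $n$-th step one lifts the connecting map of the triangle $\tau^{\le n-1}K\to\tau^{\le n}K\to H^n(K)[-n]$ along $\epsilon$ --- possible by the full faithfulness just proved, since its source $\epsilon(H^n(K)[-n])$ and target $\epsilon(M_{n-1}[1])$ lie in the image of $\epsilon$ --- and forms the corresponding cone in $D^+(\qcoh(\sX))$. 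The $M_n$ can be realised as honest complexes stabilising in each degree, so their colimit $M$ is a bounded below complex of quasi-coherent sheaves with $\epsilon M\cong K$; hence $\epsilon$ is an equivalence. (One could instead organise the whole argument via Claim~\ref{simplicial}, replacing $D^+_{qcoh}(\sX)$ by $D^+_{qcoh,cart}(X_\bu)$ and $\qcoh(\sX)$ by cartesian quasi-coherent sheaves on $X_\bu$, and combining the affine equivalences $D^+_{qcoh}(X_i)\simeq D^+(\qcoh(X_i))$ with faithfully flat descent; the essential difficulty is unchanged.)
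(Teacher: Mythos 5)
Your route is genuinely different from the paper's actual proof of Claim~\ref{derivedqcoh}, and it essentially reproduces the alternative argument the paper itself only sketches in a Remark, and only for the bounded derived category. The paper's proof goes through the simplicial affine scheme $X_\bu$ of an affine presentation (Lemma~\ref{semiseparated}): Claim~\ref{simplicial} identifies $D^+_{qcoh}(\sX)$ with $D^+_{qcoh,cart}(X_\bu)$, Lemma~\ref{DCom} (Laszlo--Olsson applied to the map to a simplicial point) replaces the latter by $D^+_{cart}(\qcoh(X_\bu))$, and the explicit \v Cech functor $\check\pi_{\bu*}$ --- which produces honest complexes of quasi-coherent sheaves on $\sX$ because each $\pi_i$ is affine, hence $\pi_{i*}$ is exact --- is checked to be inverse to $\pi_\bu^*$. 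That argument handles $D^+$ in one stroke and quarantines the lisse-\'etale subtleties inside the citation of \cite[Theorem~2.2.3]{LO}. Your approach avoids the simplicial machinery, but the two steps you defer are exactly the hard ones.

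Two concrete points. First, you do not need (and should not try to prove directly) the vanishing of $\Ext^k_{\cO_\sX}(\F,I)$ for an \emph{arbitrary} injective $I$ of $\qcoh(\sX)$: since $\Ext^0$ agrees on the two sides, it suffices that both $\delta$-functors $\Ext^k_{\qcoh(\sX)}(\F,-)$ and $\Ext^k_{\cO_\sX}(\F,-)$ be effaceable on $\qcoh(\sX)$ for $k>0$. The paper effaces them by embedding any $\G$ into $\pi_*\I$, where $\pi^*\G\hookrightarrow\I$ is an embedding into an injective of $\qcoh(X)$ for the affine presentation $\pi:X\to\sX$; then $\Ext^k_{\cO_\sX}(\F,\pi_*\I)=\Ext^k_{\cO_X}(\pi^*\F,\I)=0$ by adjunction ($\pi$ flat and affine) and the classical affine case. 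This discharges your ``crux'' in one line, whereas running descent through the cosimplicial $\Ext$-complex does not obviously work (the groups $\Ext^{>0}_{\cO_{X_i}}(\pi_i^*\F,\pi_i^*\G)$ need not vanish for general $\G$). Note also that your displayed chain of equalities silently uses this comparison: the identity $\Hom_{D(\cO_\sX)}(\epsilon M,I^\bu)=\Hom_{K(\cO_\sX)}(\epsilon M,I^\bu)$ requires killing maps from \emph{all} acyclic complexes of $\cO_\sX$-modules, not only the quasi-coherent ones, so the correct dévissage is by truncation using the $\Ext$-comparison on sheaves. Second, your Postnikov induction for essential surjectivity is the genuinely delicate point for complexes unbounded above: realizing the connecting maps $H^n(K)[-n-1]\to M_{n-1}$ as chain maps forces a resolution of the source (or of $M_{n-1}$ by injectives of $\qcoh(\sX)$), and the claimed degreewise stabilization then needs an actual argument. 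This is precisely where the paper's Remark stops at $D^b$ (where essential surjectivity is immediate, since quasi-coherent sheaves generate $D^b_{qcoh}(\sX)$ under cones and the essential image of a fully faithful triangulated functor is closed under cones) and where the \v Cech argument of the main proof earns its keep for $D^+$.
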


Claims~\ref{topology} and \ref{simplicial} are well known  for the bounded below derived category $D^+_{qcoh}(\sX)$. In this case, they fit into
the framework of cohomological descent \cite{descent} (see also \cite[Chapter 5]{D2}); see Theorem~13.5.5 in \cite{La} for a proof of Claim~\ref{simplicial} under this assumption. Laszlo and Olsson 
extended cohomological descent to unbounded derived categories in \cite{LO}; the claims easily follow.

\begin{proof}[Proof of Claims~\ref{topology} and \ref{simplicial}] 
Claim~\ref{simplicial} immediately follows from Theorem~2.2.3 of \cite{LO} (see \cite[Example~2.2.5]{LO}). 
To prove Claim~\ref{topology}\eqref{topology1}, 
we first note that there are natural functors relating different 
versions of the derived category (the functors are the direct and inverse images under the continuous maps between the corresponding sites).
We need to verify that the functors are equivalences. This also follows from \cite[Theorem~2.2.3]{LO}.
(Technically, \cite[Theorem~2.3.3]{LO} only considers maps from a simplicial topos to a topos, but
any site can be viewed as a simplicial site.) 
Same argument applies to sites associated with different topologies on $X_\bu$, proving Claim~\ref{topology}\eqref{topology2}.
\end{proof}

Let us now prove Claim~\ref{derivedqcoh}. Fix a presentation $\pi:X\to\sX$ such that $X$ is an affine scheme and
$\pi$ is an affine morphism; such presentation exists by Lemma~\ref{semiseparated}. Let
$X_\bu$ be the corresponding simplicial scheme. Consider the derived categories
$D_{qcoh}(\sX)$ and $D_{qcoh,cart}(X_\bu)$.

Claim~\ref{simplicial} provides an equivalence between the categories. The equivalence is given
by functors $\pi_{\bu *}$ and $\pi_\bu^*$, where $\pi_\bu$ stands for the morphism $X_\bu\to\sX$.
(We often use the same notation for a functor on an abelian category and its derived functor on the derived category.) 

Now consider the derived category $D_{cart}(\qcoh(X_\bu))$. Its objects are
complexes of quasi-coherent sheaves on $X_\bu$ whose cohomology sheaves are cartesian. 

\begin{Lem}\label{DCom} The natural functor $D_{cart}(\qcoh(X_\bu))\to D_{cart,qcoh}(X_\bu)$ is an equivalence 
of categories.
\end{Lem}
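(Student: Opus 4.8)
The plan is to deduce the lemma from the analogous statement with the cartesian condition removed, namely that the natural functor $D(\qcoh(X_\bu))\to D_{qcoh}(\cO_{X_\bu})$ is an equivalence, where $\qcoh(X_\bu)$ is the abelian category of all quasi-coherent sheaves on the simplicial scheme $X_\bu$ and $D_{qcoh}(\cO_{X_\bu})\subset D(\cO_{X_\bu})$ is the full subcategory of complexes with quasi-coherent cohomology. This reduction is purely formal: each $X_n$ is Noetherian, so $\qcoh(X_n)$ is a Serre subcategory of $\mathrm{Mod}(\cO_{X_n})$; hence $\qcoh(X_\bu)$ is a Serre subcategory of $\mathrm{Mod}(\cO_{X_\bu})$, and the cohomology sheaves of a complex of quasi-coherent sheaves are the same object whether computed in $\qcoh(X_\bu)$ or in $\mathrm{Mod}(\cO_{X_\bu})$. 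Since being cartesian is a property of a single object of $\qcoh(X_\bu)$, the equivalence above restricts to an equivalence between the full subcategories on either side consisting of complexes whose cohomology is cartesian, and these are precisely $D_{cart}(\qcoh(X_\bu))$ and $D_{cart,qcoh}(X_\bu)$.

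For the equivalence $D(\qcoh(X_\bu))\iso D_{qcoh}(\cO_{X_\bu})$ I would start from the fact that, with the presentation $\pi\colon X\to\sX$ chosen so that $X$ is affine and $\pi$ is affine (Lemma~\ref{semiseparated}), every term $X_n$ is an affine scheme and every face and degeneracy morphism of $X_\bu$ is affine. The corresponding statement for a single affine Noetherian scheme $Y$, namely $D(\qcoh(Y))\iso D_{qcoh}(\cO_Y)$, is classical: on a Noetherian scheme the injective $\cO_Y$-modules are quasi-coherent, and on an affine scheme quasi-coherent sheaves are $\Gamma$-acyclic. To bootstrap to $X_\bu$ I would use the exact evaluation functors $\mathrm{ev}_n\colon\mathrm{Mod}(\cO_{X_\bu})\to\mathrm{Mod}(\cO_{X_n})$; each admits a right adjoint $R_n$ given level by level by a \emph{finite} product of pushforwards along the (affine) structure maps of $X_\bu$. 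Because pushforward along an affine morphism preserves quasi-coherence, and because $R_n$ --- being right adjoint to an exact functor --- preserves injectivity, $R_n$ carries an injective object of $\qcoh(X_n)$ (equivalently, an injective $\cO_{X_n}$-module) to an object that is at once quasi-coherent on $X_\bu$ and injective in $\mathrm{Mod}(\cO_{X_\bu})$. Any quasi-coherent sheaf on $X_\bu$ embeds, level by level, into a direct sum of such objects; iterating, one obtains resolutions inside $\qcoh(X_\bu)$ by objects adapted simultaneously to $D(\qcoh(X_\bu))$ and to $D(\cO_{X_\bu})$. Combined with the affine case and the usual d\'evissage on cohomology sheaves, this gives full faithfulness and essential surjectivity --- at least on the bounded-below derived categories, which is all that is needed for Claim~\ref{derivedqcoh}. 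For the unbounded statement one supplements this with a homotopy-limit argument, or simply invokes the unbounded cohomological descent of Laszlo--Olsson \cite{LO} applied to $X_\bu$.

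I expect the bootstrap to be the main obstacle, for two related reasons. First, the inclusion $\qcoh(X_\bu)\hookrightarrow\mathrm{Mod}(\cO_{X_\bu})$ is less innocuous than the Serre-subcategory formalism of the first paragraph might suggest: infinite products of quasi-coherent sheaves need not be quasi-coherent, so one cannot resolve by arbitrary injectives of $\qcoh(X_\bu)$ and must use the specific ``good'' objects $R_n(\cdot)$ above (or, dually, the quasi-coherator $\mathrm{Mod}(\cO_{X_\bu})\to\qcoh(X_\bu)$). This is exactly where the hypotheses that $X$ and $\pi$ are affine --- forcing all $X_n$ and all structure maps of $X_\bu$ to be affine --- are used. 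Second, the totalization/descent spectral sequence relating $D(\cO_{X_\bu})$ to the categories $D(\cO_{X_n})$ must be shown to converge; this is elementary in the bounded-below range but in the unbounded range is precisely the content of \cite{LO}.
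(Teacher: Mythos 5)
There is a genuine gap, and it sits exactly where you predicted: the bootstrap. Your plan is to resolve a quasi-coherent $\cO_{X_\bu}$-module by objects that are simultaneously quasi-coherent and adapted to $D(\cO_{X_\bu})$, using the right adjoints $R_n$ of the evaluation functors. Each individual $R_n(\I_n)$ is indeed quasi-coherent (finite products of affine pushforwards) and injective in $\mathrm{Mod}(\cO_{X_\bu})$. But to embed an arbitrary $\F\in\qcoh(X_\bu)$ you need \emph{all} levels at once: the adjunction units give a monomorphism $\F\hookrightarrow\prod_{n\ge 0}R_n(\I_n)$ into the \emph{infinite} product (there is no natural map to the direct sum, and on a Noetherian scheme a direct sum of injectives is injective but that is beside the point). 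That infinite product, taken in $\mathrm{Mod}(\cO_{X_\bu})$, is not quasi-coherent; taken in $\qcoh(X_\bu)$ via the coherator, it is no longer injective in $\mathrm{Mod}(\cO_{X_\bu})$. So the class of ``good'' objects you exhibit does not cogenerate $\qcoh(X_\bu)$, and the resolutions you need do not exist by this construction. You name this obstacle but do not overcome it. A second, related problem is your opening reduction: you propose to prove the equivalence $D(\qcoh(X_\bu))\simeq D_{qcoh}(\cO_{X_\bu})$ \emph{without} the cartesian condition and then restrict. For the unbounded derived categories (which is what the lemma asserts) this is a genuinely different and stronger statement; the total topos of $X_\bu$ has unbounded cohomological dimension in the simplicial direction, which is precisely the regime where unbounded comparison theorems fail, and the Laszlo--Olsson results you invoke are theorems about \emph{cartesian} derived categories only. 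So ``simply invoking \cite{LO}'' does not patch the non-cartesian statement, and you never specify to which morphism of topoi it should be applied.

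For comparison, the paper's proof is a single application of \cite[Theorem~2.2.3]{LO} to a specific morphism that your argument never constructs. Let $Y_\bu$ be the simplicial point (each $Y_i$ a singleton) equipped with the cosimplicial ring $H^0(Y_i,\cO_{Y_i})=H^0(X_i,\cO_{X_i})$; this gives a morphism of simplicial ringed sites $\Gamma_\bu:X_\bu\to Y_\bu$. Because every $X_i$ is affine, $\Gamma_\bu^*$ identifies $\cO_{Y_\bu}$-modules with quasi-coherent $\cO_{X_\bu}$-modules --- so $D_{cart}(Y_\bu)$ \emph{is} $D_{cart}(\qcoh(X_\bu))$ --- and \cite[Theorem~2.2.3]{LO} then says that $\Gamma_\bu^*$ and $\Gamma_{\bu*}$ are mutually inverse equivalences between $D_{cart}(Y_\bu)$ and $D_{cart,qcoh}(X_\bu)$. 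The cartesian condition, which your first paragraph treats as a decoration to be stripped off and reattached, is what makes the cited theorem applicable; and the passage to the ``algebraic simplicial point'' $Y_\bu$ is the device that replaces your injective-resolution bootstrap. If you want a hands-on proof of the bounded-below case, the workable route is not global good injectives but the skeletal-filtration spectral sequences $E_1^{n,q}=\Ext^q_{X_n}(\F_n,\G_n)$ on both sides, compared termwise using the classical affine Noetherian case; but that still leaves the unbounded statement to \cite{LO}.
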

\begin{proof} Once again, we derive the statement from \cite[Theorem~2.2.3]{LO}. Let $Y_\bu$ be the final object in
the category of simplicial topological spaces: $Y_i$ is a singleton set for all $i$. Equip $Y_i$ with a 
sheaf of rings $\cO_{Y_i}$ by setting
$$H^0(Y_i,\cO_{Y_i})=H^0(X_i,\cO_{X_i})\quad(i\ge 0).$$
Clearly, this gives rise to a simplicial sheaf of rings $\cO_{Y_\bu}$ on $Y_\bu$ and a morphism of simplicial ringed sites 
$\Gamma_\bu:X_\bu\to Y_\bu$. (Of course, a simplicial sheaf of rings on $Y_\bu$ is nothing but a cosimplicial ring.)

The inverse image $\Gamma_\bu^*$ identifies the category of $\cO_{Y_\bu}$-modules and the category of quasi-coherent $\cO_{X_\bu}$-modules.
By \cite[Theorem~2.2.3]{LO}, the functors $\Gamma_{\bu *}$ and $\Gamma_\bu^*$ induce mutually inverse equivalences 
between $D_{cart,qcoh}(X_\bu)$ and $D_{cart}(Y_\bu)$ 
(the derived category of complexes of $\cO_{Y_\bu}$-modules 
whose cohomology objects are cartesian).
\end{proof}

\begin{Rem*} Lemma~\ref{DCom} is a simplicial version of \cite[Theorem~5.1]{BN}.
\end{Rem*}

By Lemma~\ref{DCom}, we can restate Claim~\ref{derivedqcoh} using the category $D^+_{cart}(\qcoh(X_\bu))$
in place of $D^+_{cart,qcoh}(X_\bu)$. The advantage of working with $D^+_{cart}(\qcoh(X_\bu))$ is that the functor $\pi_{\bu*}$ can be 
described more explicitly. Namely, for $\F\in\qcoh(X_\bu)$, consider its \emph{\v Cech complex} 
$$
{\check\pi_{\bu*}}\F=(\pi_{0*}\F_0\to\pi_{1*}\F_1\to\dots).
$$
Here we denote by $\F_i$ the component of $\F$ concentrated on $X_i$; $\pi_i:X_i\to\sX$ is the projection. The definition immediately extends to complexes of quasi-coherent sheaves on $X_\bu$.
This yields a functor $$\check\pi_{\bu *}:D^+_{cart}(\qcoh(X_\bu))\to D^+(\qcoh(\sX))$$
such that the composition
$$ D^+_{cart}(\qcoh(X_\bu))\to D^+(\qcoh(\sX))\to D^+_{qcoh}(\sX)$$ is naturally isomorphic
$\pi_{\bu *}$.

\begin{proof}[Proof of Claim~\ref{derivedqcoh}]
Let us show that $\check\pi_{\bu *}$ is an equivalence, with inverse equivalence given by $\pi^*_\bu$.
An isomorphism between $\pi_\bu^*\circ\check\pi_{\bu *}=\pi_\bu^*\circ\pi_{\bu *}$ and the identity
functor is given by Claim~\ref{simplicial}. On the other hand, for any $\F\in\qcoh(\sX)$, we have
a natural map $$\F\to\check\pi_{\bu*}(\pi_\bu^*\F).$$
It leads to a functorial morphism from the identity functor to $\check\pi_{\bu *}\circ\pi_\bu^*$.
Clearly, it is an isomorphism of functors.
\end{proof}

\begin{Rem*} Let us sketch another proof of Claim~\ref{derivedqcoh} for bounded derived categories. 
We need to show that the natural functor
$$
D^b(\qcoh(\sX))\to D^b_{qcoh}(\sX)
$$
is an equivalence. It is enough to verify that it is fully faithful; then essential surjectivity
follows because every $\F\in D^b_{qcoh}(\sX)$ is obtained from quasi-coherent sheaves by taking cones.
For the same reason, we only need to check that the natural morphism of functors
\begin{equation}
\label{eqcomp}
\Ext^i_{\qcoh(\sX)}(\F,\G)\to\Ext^i_{\cO_\sX}(\F,\G)\quad(\F,\G\in\qcoh(\sX))
\end{equation}
is an isomorphism for all $i$. This is trivial for $i=0$. For $i>0$, it suffices to check that
both sides of \eqref{eqcomp} are effaceable as functors of $\G\in\qcoh(\sX)$.

Let $\pi:X\to\sX$ be an affine presentation. For any $\G\in\qcoh(\sX)$, pick an embedding 
$\pi^*\G\hookrightarrow\I$ into injective $\I\in\qcoh(X)$. Then $\G\to\pi_*\I$ is also an embedding.
It remains to notice that
$$\Ext^i_{\qcoh(\sX)}(\F,\pi_*\I)=\Ext^i_{\cO_\sX}(\F,\pi_*\I)=0\quad(\F\in\qcoh(\sX),i>0)$$
by adjunction.
\end{Rem*}

\begin{Rem*} B\"okstedt and Neeman proved that for a quasi-compact separated scheme $X$, the unbounded derived categories $D_{qcoh}(X)$ and $D(\qcoh(X))$ are equivalent (\cite[Corollary~5.5]{BN}). It is possible that Claim~\ref{derivedqcoh} also holds for unbounded derived categories.

Note however that a finite affine cover of a quasi-compact semi-separated scheme $X$ gives rise to a `bounded' affine semi-simplicial scheme
$X_\bu$ (that is, $X_i=\emptyset$ for $i\gg0$). The corresponding \v Cech complex is bounded, so
in this case the proof of Claim~\ref{derivedqcoh} extends to unbounded derived categories (the argument is then almost identical to that of B\"okstedt and Neeman, 
see \cite[Section~6.7]{BN}). 
In the case of stacks, such simplification is no longer available.
\end{Rem*}

\subsection{Coherent sheaves on stacks} Consider now the derived category $D_{coh}(\sX)$.
\begin{Lem}[{\cite[Proposition~15.4]{La}}] \label{union}
Any $\F\in\qcoh(\sX)$ is the union of its coherent subsheaves. \qed
\end{Lem}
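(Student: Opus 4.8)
The plan is to reduce the statement to the analogous fact for quasi-coherent sheaves on the affine scheme $X$ appearing in an affine presentation, which is classical (every quasi-coherent sheaf on a Noetherian scheme is the filtered union of its coherent subsheaves). So first I would fix, by Lemma~\ref{semiseparated}, a presentation $\pi:X\to\sX$ with $X$ an affine Noetherian scheme and $\pi$ an affine morphism. Given $\F\in\qcoh(\sX)$, the pullback $\pi^*\F\in\qcoh(X)$ is the union of its coherent subsheaves $\G_\alpha\subset\pi^*\F$; the point is to manufacture, out of each $\G_\alpha$, a coherent subsheaf of $\F$ itself, and to check these exhaust $\F$.

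The key step is the following construction. For a coherent subsheaf $\G\subset\pi^*\F$, consider the adjunction counit $\pi^*\pi_*\F\to\F$ — here $\pi_*$ makes sense as an exact (since $\pi$ is affine) functor $\qcoh(X)\to\qcoh(\sX)$ — and form the image $\F_\G\subset\F$ of the composite $\pi^*(\pi_*\text{(something)})\to\F$. Concretely: $\pi_*\G$ is a coherent subsheaf of $\pi_*\pi^*\F$ (coherence of $\pi_*\G$ uses that $\pi$ is finite type and $X$ Noetherian, so $\pi_*$ preserves coherence; affineness gives exactness, hence the inclusion). By Lemma~\ref{union} applied on $\sX$ is exactly what we are proving — so instead I take $\F_\G$ to be the image in $\F$ of $\pi^*(\pi_*\G)\to\pi^*\pi_*\F\to\F$, wait, $\pi_*\G$ need not be coherent over $\sX$ unless $\pi$ is proper, which it is not. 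Let me restructure: the honest route is to use that $\pi_*$ sends coherent to coherent requires properness, which fails, so one should instead argue descent-theoretically. Replace the above by: a coherent subsheaf of $\F$ is the same as a cartesian coherent subsheaf of $\pi_\bu^*\F$ on the simplicial scheme $X_\bu$ (using the description of $\qcoh(\sX)$ as cartesian sheaves on $X_\bu$, recalled before Claim~\ref{simplicial}). A coherent subsheaf $\G_0\subset\F_0:=\pi^*\F$ on $X_0=X$ generates, via the two pullbacks $p_0^*,p_1^*:X_1\rightrightarrows X_0$, a coherent subsheaf $\G_1:=p_0^*\G_0+p_1^*\G_0\subset\F_1$ on $X_1$ (sum of two coherent subsheaves, coherent since $X_1$ is Noetherian), and continuing this produces a subobject of the simplicial sheaf which is cartesian after one more step because the cocycle/gluing data is determined on $X_1$; coherence on each $X_i$ is automatic as each $X_i$ is Noetherian. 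The resulting cartesian coherent subsheaf corresponds to a coherent subsheaf $\F_{\G_0}\subset\F$ on $\sX$.

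Finally I would check exhaustion: given any local section of $\F$, its pullback to $X$ lies in some $\G_0$, hence in the corresponding $\F_{\G_0}$; equivalently, $\varinjlim_\alpha \pi^*\F_{\G_\alpha}=\pi^*\F$, and since $\pi$ is faithfully flat (a presentation), pulling back reflects surjectivity, so $\varinjlim_\alpha \F_{\G_\alpha}=\F$. The family $\{\F_{\G_\alpha}\}$ can be made filtered by replacing any finite subfamily with the subsheaf generated by their union on $X_0$. The main obstacle is precisely the point where I stumbled above: $\pi_*$ does \emph{not} preserve coherence for a non-proper presentation, so one cannot simply push a coherent subsheaf of $\pi^*\F$ down to $\sX$; the construction must go through the simplicial/descent picture, building a cartesian coherent subsheaf level by level on $X_\bu$ and invoking Noetherianity of each $X_i$ together with faithful flatness of $\pi$ to control coherence and exhaustion.
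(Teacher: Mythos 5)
The paper offers no proof of this lemma at all --- it is quoted from \cite[Proposition~15.4]{La} and closed with a \qed --- so your proposal is being measured against the standard argument (which is essentially the one in \emph{loc.\ cit.}). Your reduction to an affine presentation $\pi:X\to\sX$ and your diagnosis that $\pi_*$ does not preserve coherence are both correct, but the descent construction you substitute has a genuine gap at its key step. A coherent subsheaf of $\F$ is a coherent subsheaf $\G_0'\subseteq\pi^*\F$ whose two pullbacks to $X_1$ coincide under the descent isomorphism; your $\G_1=p_0^*\G_0+p_1^*\G_0$ is a coherent subsheaf of $\F_1$, but it is not of the form $p_i^*\G_0'$ for any $\G_0'$ on $X_0$, and nothing in ``one more step'' descends it back to level $0$: doing so would require pushing forward along the affine, non-proper morphisms $p_i:X_1\to X_0$, which is exactly the coherence failure you ruled out for $\pi$ itself. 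You have moved the obstacle from $X\to\sX$ to $X_1\rightrightarrows X_0$ without removing it. (The smallest descent-stable subsheaf containing $\G_0$ does exist, but its coherence is not immediate --- in the group-action picture it is the span $\sum_g g\G_0$ of all translates, an a priori infinite sum.)

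The standard fix goes in the opposite direction: instead of the smallest invariant subsheaf \emph{containing} a given coherent $\mathcal{N}\subseteq\pi^*\F$, take the largest one \emph{contained in} it, namely
$$\F_{\mathcal{N}}=\ker\bigl(\F\to\pi_*(\pi^*\F/\mathcal{N})\bigr),$$
where the map is the unit $\F\to\pi_*\pi^*\F$ followed by $\pi_*$ of the quotient map. This is a quasi-coherent subsheaf of $\F$ because $\pi$ is affine, so $\pi_*$ preserves quasi-coherence. By flatness of $\pi$ and the triangle identity for the adjunction, $\pi^*\F_{\mathcal{N}}\subseteq\mathcal{N}$; hence $\pi^*\F_{\mathcal{N}}$ is coherent ($X$ is Noetherian), and $\F_{\mathcal{N}}$ is coherent by faithfully flat descent of coherence --- note that no coherence of any pushforward is ever needed, only that the \emph{kernel} can be tested after pullback. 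Exhaustion follows because $\pi_*$ commutes with filtered colimits and filtered colimits are exact in $\qcoh$: writing $\pi^*\F=\bigcup_i\mathcal{N}_i$, one gets $\varinjlim_i\F_{\mathcal{N}_i}=\ker(\F\to\pi_*(0))=\F$. Your closing exhaustion argument, which relies on $\pi^*\F_{\G_0}\supseteq\G_0$, should be replaced by this colimit computation.
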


\begin{Cor} \label{odinhren}           
Let $\F^\bu$ be a bounded above complex of quasi-coherent sheaves on $\sX$ whose
cohomology is coherent. Then $\F^\bu$ contains a coherent subcomplex $\F_c^\bu\subset\F^\bu$
quasi-isomorphic to $\F^\bu$. Moreover, for any coherent subcomplex $\G^\bu\subset\F^\bu$, 
we can choose such $\F_c^\bu$ so that $\F_c^\bu\supset\G^\bu$.
\end{Cor}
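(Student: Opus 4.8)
The plan is to construct $\F_c^\bu$ by descending induction on the degree, using only Lemma~\ref{union} and the fact that on the Noetherian stack $\sX$ a subsheaf of a coherent sheaf is coherent and a finite sum of coherent subsheaves of a fixed quasi-coherent sheaf is coherent. Since $\F^\bu$ is bounded above, fix $n$ with $\F^i=0$ for $i>n$ and set $\F_c^i=0$ for $i>n$. Suppose coherent subsheaves $\F_c^j\subset\F^j$ with $d(\F_c^j)\subset\F_c^{j+1}$ and $\G^j\subset\F_c^j$ have been chosen for all $j>i$; I will choose a coherent $\F_c^i\subset\F^i$ with $\G^i\subset\F_c^i$ satisfying: (1) $d^i(\F_c^i)\subset\F_c^{i+1}$; (2) $\ker(d^i|_{\F_c^i})$ surjects onto $H^i(\F^\bu)$; (3) $d^i(\F^i)\cap\F_c^{i+1}\subset d^i(\F_c^i)$. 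Granting this for all $i$, the collection $\F_c^\bu$ is a subcomplex (note $d(\G^i)\subset\G^{i+1}\subset\F_c^{i+1}$), and a short diagram chase shows $\F_c^\bu\hookrightarrow\F^\bu$ is a quasi-isomorphism: in degree $i$, surjectivity of $H^i(\F_c^\bu)\to H^i(\F^\bu)$ follows from (2) applied in degree $i$, and injectivity follows from (3) applied in degree $i-1$, since a cocycle of $\F_c^\bu$ in degree $i$ that is a coboundary in $\F^\bu$ lies in $d^{i-1}(\F^{i-1})\cap\F_c^i\subset d^{i-1}(\F_c^{i-1})$ and hence is already a coboundary in $\F_c^\bu$.

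To carry out the inductive step, note first that $d^i(\F^i)\cap\F_c^{i+1}$ is a subsheaf of the coherent sheaf $\F_c^{i+1}$, hence coherent; lifting a finite generating set through the surjection $\F^i\twoheadrightarrow d^i(\F^i)$ and applying Lemma~\ref{union} to each lift produces a coherent $A_0\subset\F^i$ with $d^i(A_0)\supset d^i(\F^i)\cap\F_c^{i+1}$. The image $d^i(A_0)$ need not lie in $\F_c^{i+1}$, but $A:=A_0\cap(d^i)^{-1}(\F_c^{i+1})$ is still coherent, being a subsheaf of $A_0$, it satisfies $d^i(A)\subset\F_c^{i+1}$ by construction, and one checks directly that it still has $d^i(A)\supset d^i(\F^i)\cap\F_c^{i+1}$. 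Next, lifting a finite generating set of the coherent sheaf $H^i(\F^\bu)$ to $Z^i:=\ker(d^i\colon\F^i\to\F^{i+1})$, applying Lemma~\ref{union}, and intersecting the resulting coherent sheaf with $Z^i$ (again a subsheaf of a coherent sheaf) yields a coherent $B\subset Z^i$ surjecting onto $H^i(\F^\bu)$. Then $\F_c^i:=A+B+\G^i$ is a finite sum of coherent subsheaves of $\F^i$, hence coherent, contains $\G^i$, and satisfies (1) (from $d^i(A)\subset\F_c^{i+1}$, $d^i(B)=0$, $d^i(\G^i)\subset\G^{i+1}\subset\F_c^{i+1}$), (2) (from $B$), and (3) (from $A$), completing the induction.

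The one genuinely delicate point is the joint requirement (1) and (3) on $d^i(\F_c^i)$: this subsheaf must be contained in $\F_c^{i+1}$ and yet large enough to absorb all of $d^i(\F^i)\cap\F_c^{i+1}$ — the latter being exactly what forces the inclusion to be injective on cohomology. The device above resolves it: enlarge freely to get a coherent $A_0$ with sufficiently large image, then cut it back down by intersecting with $(d^i)^{-1}(\F_c^{i+1})$, the key being that intersecting with an already coherent sheaf cannot destroy coherence. Everything else is routine bookkeeping with finitely generated subsheaves on a Noetherian stack.
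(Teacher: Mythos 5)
Your proof is correct and follows essentially the same route as the paper's: a descending induction in which the coherent subsheaves $\F_c^i$ are produced from Lemma~\ref{union} plus Noetherianity of coherent sheaves, your conditions (1) and (3) together being exactly the paper's single requirement $d(\F^i_c)=\F^{i+1}_c\cap \cB^{i+1}$. The only cosmetic differences are that the paper builds your sheaf $A$ directly as a coherent subsheaf of $d^{-1}(\F_c^{i+1})$ whose image is all of $\F_c^{i+1}\cap\cB^{i+1}$, avoiding the detour through $A_0$ and the subsequent intersection, and that your phrase ``lifting a finite generating set'' should be read as the directed-union/Noetherian stabilization argument (a coherent sheaf on a stack need not be globally generated), which is precisely how Lemma~\ref{union} is applied in the paper.
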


\begin{proof} The argument is almost standard (similar to \cite[Proposition~I.4.8]{H}, for instance). We can construct 
subsheaves $\F^i_c\subset\F^i$ by descending induction in $i$. We require that $\F^i_c$ has
the following properties:
\begin{gather*}
d(\F^i_c)=\F^{i+1}_c\cap \cB^{i+1}\\
(\F^i_c\cap \cZ^i)\sur \cZ^i/\cB^i=H^i(\F^\bu)\\
\F_c^i\supset \G^i.
\end{gather*}
Here $\cZ^i, \cB ^i\subset \F^i$ denote, respectively, the kernel and the image of the differential.

Such $\F^i_c$ exists by Lemma~\ref{union}. Indeed, $d^{-1}(\F^{i+1}_c)$ is a union of its coherent subsheaves;
since $$d(d^{-1}(\F^{i+1}_c))=\F^{i+1}_c\cap \cB^{i+1}$$
is coherent (and therefore Noetherian), there exists a coherent subsheaf $\F^i_1\subset d^{-1}(\F^{i+1}_c)$
such that $d(\F^i_1)=\F^{i+1}_c\cap \cB^{i+1}$. Similarly, there exists a coherent subsheaf $\F^i_2\subset\cZ^i$
whose projection to $H^i(\F^\bu)$ is surjective. Take $\F_c^i=\G^i+\F^i_1+\F^i_2$.
\end{proof}

\begin{Cor} 
The category $D^b_{coh}(\sX)$ is naturally equivalent to the derived category $D^b(\coh(\sX))$
of complexes of coherent sheaves.
\end{Cor}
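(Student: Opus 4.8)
The plan is to deduce this corollary from the two preceding results (Claim~\ref{derivedqcoh} and Corollary~\ref{odinhren}) by the standard ``subcategory comparison'' argument, in the same spirit as the sketched proof of Claim~\ref{derivedqcoh} for bounded derived categories. There is a natural triangulated functor $\Phi: D^b(\coh(\sX))\to D^b_{coh}(\sX)$ induced by the inclusion $\coh(\sX)\hookrightarrow\qcoh(\sX)\hookrightarrow D(\cO_\sX)$ (its essential image lands in $D^b_{coh}(\sX)$ since cohomology sheaves of a complex of coherent sheaves are coherent). I want to show $\Phi$ is an equivalence.

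First I would establish essential surjectivity. Given $\G^\bu\in D^b_{coh}(\sX)$, by Claim~\ref{derivedqcoh} (more precisely, its bounded version, which follows since $D^b_{qcoh}(\sX)\subset D^+_{qcoh}(\sX)=D^+(\qcoh(\sX))$ and the complex is bounded) we may represent $\G^\bu$ by a bounded complex of quasi-coherent sheaves with coherent cohomology; truncating, we can even take it bounded. Applying Corollary~\ref{odinhren} produces a quasi-isomorphic coherent subcomplex, which need not be bounded below, but its cohomology is bounded, so replacing it by a suitable stupid/canonical truncation (the canonical truncation $\tau_{\geq N}$ of a complex of coherent sheaves for $N$ below the cohomological range is again a complex of coherent sheaves, since kernels and cokernels of maps of coherent sheaves are coherent) yields an object of $D^b(\coh(\sX))$ mapping isomorphically to $\G^\bu$.

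Next I would prove full faithfulness. As in the sketch after Claim~\ref{derivedqcoh}, it suffices to check that for $\F,\G\in\coh(\sX)$ the natural maps
\begin{equation*}
\Ext^i_{\coh(\sX)}(\F,\G)\to\Ext^i_{\cO_\sX}(\F,\G)
\end{equation*}
are isomorphisms for all $i\geq 0$; indeed, once this is known for sheaves placed in a single degree, a standard dévissage (every object of $D^b(\coh(\sX))$ is built from coherent sheaves by finitely many cones, and both $\Hom$-functors are cohomological and send these cones to long exact sequences) upgrades it to arbitrary objects. For $i=0$ the map is an isomorphism by definition of the full subcategory $\coh(\sX)\subset\qcoh(\sX)$. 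For $i>0$ I would factor the comparison through $\Ext^i_{\qcoh(\sX)}(\F,\G)$, which equals $\Ext^i_{\cO_\sX}(\F,\G)$ by the sketched argument after Claim~\ref{derivedqcoh}; so it remains to see $\Ext^i_{\coh(\sX)}(\F,\G)=\Ext^i_{\qcoh(\sX)}(\F,\G)$. This is where I would invoke Lemma~\ref{union}: a morphism or a Yoneda extension class computed in $\qcoh(\sX)$ can be represented using a resolution whose terms are, up to the relevant homological degree, unions of coherent subsheaves, and since $\F$ is coherent (hence finitely presented, so $\Hom(\F,-)$ commutes with the filtered colimit of coherent subsheaves) any map out of $\F$ and any extension of $\F$ factors through a coherent subsheaf. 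Concretely, I would show the standard fact that $D^b(\coh(\sX))\to D^b(\qcoh(\sX))$ is fully faithful because $\coh(\sX)$ is a Serre subcategory of $\qcoh(\sX)$ closed under subobjects and quotients with the property that every quasi-coherent sheaf is the union of its coherent subsheaves, which implies the induced map on all $\Ext$-groups is an isomorphism.

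The main obstacle I expect is the bookkeeping in the full-faithfulness step: one must handle \emph{unbounded below} coherent subcomplexes produced by Corollary~\ref{odinhren} when comparing $\Ext$-groups, and ensure that the filtered-colimit arguments (which rely on $\F$ being finitely presented so that $\Hom(\F,-)$ and, more delicately, the higher $\Ext^i(\F,-)$ interact correctly with unions of subsheaves) are applied only in situations where they are valid. The cleanest route around this is probably to never leave the bounded-below world: use Claim~\ref{derivedqcoh} to identify $D^+_{coh}(\sX)$-type statements first, reduce everything to the already-proven $D^+$ comparison together with the standard lemma that for an abelian category $\mathcal{A}$ and a Serre subcategory $\mathcal{A}'$ closed under subobjects with every object of $\mathcal{A}$ a union of its subobjects in $\mathcal{A}'$, the functor $D^b(\mathcal{A}')\to D^b_{\mathcal{A}'}(\mathcal{A})$ is an equivalence — and then cite or reprove that lemma. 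Everything else is routine.
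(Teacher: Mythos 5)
Your proposal is correct, and its skeleton matches the paper's: both reduce the statement, via Claim~\ref{derivedqcoh}, to comparing $D^b(\coh(\sX))$ with the subcategory of $D^b(\qcoh(\sX))$ of complexes with coherent cohomology, and both obtain essential surjectivity from Corollary~\ref{odinhren}. Where you diverge is the proof of full faithfulness. The paper works directly with roofs: a morphism in $D^b(\qcoh(\sX))$ between bounded complexes of coherent sheaves is of the form $f\iota^{-1}$ with a quasi-coherent apex $\F_1^\bu$, and Corollary~\ref{odinhren} lets one shrink $\F_1^\bu$ to a quasi-isomorphic coherent subcomplex; this disposes of fullness and faithfulness in one stroke with no further input. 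You instead d\'evisse to the comparison of $\Ext^i(\F,\G)$ for coherent sheaves placed in single degrees and then invoke the standard Serre-subcategory lemma (resting on Lemma~\ref{union} plus the fact that coherent sheaves on a Noetherian stack are Noetherian objects). That lemma is true and your reduction to it is sound, but proving it amounts to redoing the subcomplex-replacement of Corollary~\ref{odinhren} at the level of Yoneda extensions or resolutions, so given what is already established the paper's route is the more economical one. Two smaller remarks: the ``main obstacle'' you anticipate does not arise --- the coherent subcomplex produced by Corollary~\ref{odinhren} is a subcomplex of a bounded complex, hence automatically bounded below, so no extra truncation or unbounded-below bookkeeping is needed; and while for $i=1$ the $\Ext$ comparison is immediate (an extension of a coherent sheaf by a coherent sheaf is coherent), for $i\ge 2$ you genuinely need the filtered-union/Noetherian argument, so that step should not be left as loose as it currently is if you write this up.
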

\begin{proof} Consider the category $D^b_{coh}(\qcoh(\sX))$ (the full subcategory of
$D^b(\qcoh(\sX))$ formed by complexes with coherent cohomology). By Claim~\ref{derivedqcoh}, it is
equivalent to $D^b_{coh}(\sX)$. Therefore, we need to show that the natural functor
$$D^b(\coh(\sX))\to D^b_{coh}(\qcoh(\sX))$$
is an equivalence. Essential surjectivity follows from Corollary~\ref{odinhren}. Let us show that the functor is faithful.
Suppose $$f:\F^\bu\to\G^\bu$$ is a morphism between bounded complexes of coherent sheaves whose image in $D^b(\qcoh(\sX))$ vanishes.
Then there exists a quasi-isomorphism $$\iota:\F_1^\bu\to\F^\bu$$
such that $f \iota$ is homotopic to zero. Here $\F_1^\bu$ is a bounded
complex of quasi-coherent sheaves. Using Corollary~\ref{odinhren}, we replace $\F_1^\bu$ by a quasi-isomorphic coherent subcomplex. This shows
that the image of $f$ in $D^b(\coh(\sX))$ also vanishes.

Now let us prove the functor is full. Given bounded complexes of coherent sheaves $\F^\bu,\G^\bu$, a morphism between 
their images in $D^b(\qcoh(\sX))$ is of the form 
$f \iota^{-1}$, where $f:\F_1^\bu\to\G^\bu$ is a morphism between complexes of quasi-coherent 
sheaves, and $\iota:\F_1^\bu\to\F^\bu$ is a quasi-isomorphism. 
By Corollary~\ref{odinhren}, we can replace $\F_1^\bu$ by a quasi-isomorphic coherent
subcomplex. Then $f\iota^{-1}$ is defined in $D^b(\coh(\sX))$.
\end{proof} 

We also need statements relating coherent sheaves on $\sX$ to coherent sheaves
on its substacks.

\begin{Lem}\label{exte} Let
$\sU\subset\sX$
be an open substack. 

\begin{enumerate}
\item\label{exte:a} (cf. \cite[Corollary 15.5]{La}) Any $\F\in D^b_{coh}(\sU)$ extends to 
$\Ftil\in D^b_{coh}(\sX)$ such that $\Ftil|_\sU\simeq\F$.

\item\label{exte:b}
For any $\F,\G\in D^b_{coh}(\sU)$ and a morphism $f:\F\to \G$, we can choose extensions
$\Ftil, \Gtil\in D^b_{coh}(\sX)$ and a morphism
$\ftil:\Ftil\to \Gtil$ such that $\ftil|_U \cong f$. 

\item\label{exte:c}
For any $\Ftil_1, \Ftil_2\in D^b_{coh}(\sX)$ 
and an isomorphism $f:\Ftil_1|_\sU\cong \Ftil_2|_\sU$, there exists $\Ftil\in D^b_{coh}(\sX)$ 
and morphisms $f_1:\Ftil_1\to \Ftil$, $f_2:\Ftil_2\to \Ftil$ such that 
$f_1|_\sU$ and $f_2|_\sU$ are isomorphisms and $f=(f_1|_\sU)\circ(f_2|_\sU)^{-1}$.
\end{enumerate}
\end{Lem}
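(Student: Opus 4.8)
The plan is to derive all three parts of Lemma~\ref{exte} from Corollary~\ref{odinhren}, which is the essential extension tool: a bounded-above complex of quasi-coherent sheaves with coherent cohomology contains a quasi-isomorphic coherent subcomplex, and one can prescribe a coherent subcomplex to be contained in it. I would first dispose of part~\eqref{exte:a}. Given $\F\in D^b_{coh}(\sU)$, represent it by a bounded-above complex of quasi-coherent sheaves on $\sU$ (using that $D^b_{coh}(\sU)\simeq D^b_{coh}(\qcoh(\sU))$ and that any bounded complex admits a bounded-above, even $K$-flat or injective-tailored, replacement). Let $j:\sU\hookrightarrow\sX$ be the open embedding. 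Since $\sX$ is Noetherian, $j_*$ preserves quasi-coherence, so $j_*$ of this complex is a complex of quasi-coherent sheaves on $\sX$, bounded above, whose restriction to $\sU$ is $\F$. Its cohomology need not be coherent, but truncating in high degree and replacing the resulting bounded complex by a coherent subcomplex via Corollary~\ref{odinhren} (the cohomology in the relevant range is coherent since on $\sU$ it agrees with that of $\F$ and $\sX$ is Noetherian so that coherent sheaves on $\sU$ extend to coherent sheaves on $\sX$, cf. \cite[Corollary~15.5]{La}) produces $\Ftil\in D^b_{coh}(\sX)$ with $\Ftil|_\sU\simeq\F$.

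For part~\eqref{exte:b}, the point is to realize the morphism $f:\F\to\G$ by an \emph{honest} map of complexes after suitable replacement. I would represent $f$ by a roof $\F\xleftarrow{\iota}\F'\xrightarrow{g}\G$ with $\iota$ a quasi-isomorphism of bounded-above quasi-coherent complexes on $\sU$; applying the construction of part~\eqref{exte:a} simultaneously, i.e. applying $j_*$, truncating, and then invoking the relative version of Corollary~\ref{odinhren} that allows prescribing a subcomplex, I can choose coherent extensions so that the maps $\tilde\iota$ and $\tilde g$ are maps of coherent complexes on $\sX$ restricting to $\iota$ and $g$; since $\tilde\iota$ restricts to a quasi-isomorphism on $\sU$, inverting it in $D^b_{coh}(\sX)$ (after possibly shrinking, using that a map of bounded complexes which is a quasi-iso on $\sU$ becomes one on $\sX$ after further truncation/coherent-subcomplex adjustment — or simply observing that $\tilde\iota$ need not be a global quasi-iso, but $\ftil:=\tilde g\circ\tilde\iota^{-1}$ still makes sense as a morphism in the derived category once $\tilde\iota$ is a quasi-iso, which one arranges) gives $\ftil:\Ftil\to\Gtil$ with $\ftil|_\sU\cong f$. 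The cleanest route is: build $\Gtil$ first by part~\eqref{exte:a}, then represent $f$ as $\F\xrightarrow{} \Gtil|_\sU$, extend the complex computing this map; I would present it with a roof and extend the whole roof.

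Part~\eqref{exte:c} is then a formal consequence of parts~\eqref{exte:a} and~\eqref{exte:b} together with the ability to take cones. Given $\Ftil_1,\Ftil_2$ and $f:\Ftil_1|_\sU\cong\Ftil_2|_\sU$, apply part~\eqref{exte:b} to the morphism $f$ (viewed as a map from $\Ftil_1|_\sU$ to $\Ftil_2|_\sU$) to get extensions $\Ftil_1'$, $\Ftil_2'$ of $\Ftil_1|_\sU$, $\Ftil_2|_\sU$ with a morphism $\ftil:\Ftil_1'\to\Ftil_2'$ restricting to $f$; but I actually want the target fixed. The better formulation: apply part~\eqref{exte:b} to $\mathrm{id}:\Ftil_1|_\sU\to\Ftil_1|_\sU$ realized via $f$, i.e. choose a complex $\Ftil$ and maps $f_1:\Ftil_1\to\Ftil$, $f_2:\Ftil_2\to\Ftil$ as follows. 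Take the morphism $g:=f:\Ftil_1|_\sU\to\Ftil_2|_\sU$; by part~\eqref{exte:b} extend it to $\tilde g:\Ftil_1\to\Ftil_2''$ where $\Ftil_2''$ is some extension of $\Ftil_2|_\sU$. Then apply part~\eqref{exte:c}'s own statement inductively? No — instead: there is a standard trick. Both $\Ftil_1\to\Ftil_2''$ (from extending $f$) and $\Ftil_2\to\Ftil_2''$ (from comparing two extensions, which reduces to the case $f=\mathrm{id}$) should land in a common $\Ftil$; to handle "two extensions of the same object map to a common third one", form the complex representing the identity and extend it, or observe directly that if $\Ftil_2, \Ftil_2''$ both extend $\Ftil_2|_\sU$ then the identity on $\Ftil_2|_\sU$ is a morphism $\Ftil_2|_\sU\to\Ftil_2''|_\sU$, extend it by part~\eqref{exte:b} to $h:\Ftil_2\to\Ftil_2'''$ and also $h':\Ftil_2''\to\Ftil_2'''$... this threatens to loop. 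The resolution is to prove a small lemma: any two extensions of $\F\in D^b_{coh}(\sU)$ are dominated by a third; this follows from part~\eqref{exte:b} applied to $\mathrm{id}_\F$ twice, or more honestly from Corollary~\ref{odinhren} applied to the mapping cylinder. Then set $\Ftil$ to dominate both $\Ftil_2$ and $\Ftil_2''$, let $f_2:\Ftil_2\to\Ftil$ be the structure map, and $f_1:\Ftil_1\to\Ftil$ the composite $\Ftil_1\xrightarrow{\tilde g}\Ftil_2''\to\Ftil$; restricting to $\sU$ recovers $f$.

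The main obstacle will be part~\eqref{exte:b}, specifically the bookkeeping needed to extend a \emph{roof} (a pair consisting of a quasi-isomorphism and an arbitrary map) coherently while keeping the quasi-isomorphism property — more precisely, ensuring that after applying $j_*$, truncating, and passing to coherent subcomplexes via Corollary~\ref{odinhren}, the arrow that was a quasi-isomorphism on $\sU$ can still be inverted in $D^b_{coh}(\sX)$. This requires applying Corollary~\ref{odinhren} to the mapping cone of that arrow (whose cohomology is supported off $\sU$, hence coherent after truncation on all of $\sX$) and doing the extensions compatibly, i.e. invoking the "moreover" clause of Corollary~\ref{odinhren} to fit a prescribed subcomplex. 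Everything else is standard homological algebra of bounded complexes plus the two facts already established: $D^b_{coh}(\sX)\simeq D^b(\coh(\sX))$ and $j_*$ preserves quasi-coherence on Noetherian stacks.
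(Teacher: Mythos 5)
Your overall strategy --- push forward termwise along $j:\sU\imbed\sX$ and cut down to a coherent subcomplex using Corollary~\ref{odinhren} --- is the paper's strategy, but two steps fail as written. First, the cohomology of the termwise pushforward $j_*\F^\bu$ is \emph{not} coherent on $\sX$, even in the range where $\F$ has cohomology: already for $X=\spec\kk[t]$, $U=X\setminus\{0\}$ and $\F=\cO_U$ one gets $H^0(j_*\cO_U)=j_*\cO_U$, which is quasi-coherent but not coherent. So Corollary~\ref{odinhren} cannot be invoked as a black box, and your parenthetical justification (``the cohomology agrees with that of $\F$ on $\sU$, and coherent sheaves on $\sU$ extend'') does not repair this. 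What is true, and what the paper actually uses, is that the \emph{construction} in the proof of Corollary~\ref{odinhren} still goes through if one demands only that the inclusion $\F^\bu_c|_\sU\imbed j_*\F^\bu|_\sU=\F^\bu$ be a quasi-isomorphism: restriction to $\sU$ is exact, the cohomology of $\F^\bu$ over $\sU$ is coherent, and the restrictions to $\sU$ of the coherent subsheaves of $j_*\F^i$ exhaust $\F^i$, so finitely many suffice by Noetherianity. The resulting $\F^\bu_c$ is in general not quasi-isomorphic to $j_*\F^\bu$ on $\sX$ (in the example one takes $\cO_X\subset j_*\cO_U$), and it does not need to be.

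Second, in part~\eqref{exte:b} your plan to extend both legs of a roof $\F\xleftarrow{\iota}\F'\xrightarrow{g}\G$ and then invert $\tilde\iota$ does not work: an extension to $\sX$ of a quasi-isomorphism over $\sU$ is essentially never a quasi-isomorphism on $\sX$ --- its cone is an object supported on $\sX\setminus\sU$ with no reason to vanish (e.g.\ $\cO_X\imbed t^{-1}\cO_X$ inside $j_*\cO_U$ extends the identity of $\cO_U$ but is not a quasi-isomorphism), and no truncation or passage to subcomplexes is guaranteed to kill it. The way out, which is what the paper does, is that nothing needs to be inverted: since the lemma lets you \emph{choose} the extension of $\F$, take the apex $\F'$ (made a bounded complex of coherent $\cO_\sU$-modules via Corollary~\ref{odinhren}) as the representative of $\F$, so that $f$ becomes an honest chain map $\F'^\bu\to\G^\bu$; then choose a coherent $\F^\bu_c\subset j_*\F'^\bu$ first and a coherent $\G^\bu_c\subset j_*\G^\bu$ containing $(j_*f)(\F^\bu_c)$ by the ``moreover'' clause. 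The same device untangles your part~\eqref{exte:c}, which as written flirts with circularity (comparing two extensions of $\Ftil_2|_\sU$ is itself an instance of part~\eqref{exte:c}): write the isomorphism $f$ as a valley $\Ftil_1|_\sU\to\F^\bu\leftarrow\Ftil_2|_\sU$ with both legs quasi-isomorphisms of complexes of coherent $\cO_\sU$-modules, and extend the single middle complex $\F^\bu$ to a coherent subcomplex of $j_*\F^\bu$ containing the images of both $\Ftil_1^\bu$ and $\Ftil_2^\bu$.
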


\begin{proof} The argument is parallel to the proof of Corollary~\ref{odinhren}.  
Indeed, $\F\in D^b_{coh}(\sU)$ can be represented by a bounded complex of coherent 
$\cO_\sU$-modules $\F^\bu$. The complex extends to a complex of quasi-coherent $\cO_\sX$-modules $j_*\F^\bu$. Here $j:\sU\imbed\sX$ is the embedding. Replace
$j_*\F^\bu$ by a coherent subcomplex $\F_c^\bu$ such that the restriction 
$\F_c^\bu|_\sU \imbed j_*\F^\bu|_\sU=\F^\bu$ is a quasi-isomorphism (arguing as in
Corollary~\ref{odinhren}).
This proves \eqref{exte:a}.

Given a morphism $f:\F\to\G$ as in part~\eqref{exte:b}, choose representatives
$\F^\bu,\G^\bu$ in such a way that $f$ can be represented by a chain map
$\F^\bu\to\G^\bu$. Choose a coherent subcomplex 
$\F_c^\bu\subset j_*\F^\bu$ first, and then choose a coherent subcomplex
$\G_c^\bu\subset j_*\G^\bu$ containing $f(\F_c^\bu)$. 

For part \eqref{exte:c}, represent $\Ftil_i$ by a bounded complex of coherent
$\cO_\sX$-modules $\Ftil_i^\bu$ ($i=1,2$). The map $f$ then can be written as
$g_1\circ g_2^{-1}$ for a bounded complex of coherent $\cO_\sU$-modules $\F^\bu$ and quasi-isomorphisms $g_i:\Ftil_i^\bu|_\sU\to\F^\bu$. Finally, extend $\F^\bu$ to a complex
$\Ftil^\bu$ of coherent $\cO_\sX$-modules such that $g_i$ extends to $f_i:\Ftil_i^\bu\to\Ftil^\bu$ for $i=1,2$ by the argument used to prove part~\eqref{exte:b}. 

\end{proof}

\begin{Rem*}
Lemma~\ref{exte} implies that $D^b_{coh}(\sU)$ is equivalent to the localization 
of $D^b_{coh}(\sX)$ 
with respect to the class of morphisms that become isomorphisms when restricted to $\sU$.
\end{Rem*}

For a stack $\sX$, we denote by $\sX\tp$ the set of points of $\sX$ equipped with the Zariski
topology. Recall (\cite[Chapter~5]{La}) that a \emph{point} of $\sX$ is an equivalence class
of pairs $(K,\xi)$, where $K$ is a field and $\xi:\spec(K)\to\sX$ is a morphism. Two such pairs
$(K,\xi)$, $(K',\xi')$ represent the same point if the fields $K$ and $K'$ have a common
extension $K''$ such that the pull-backs of $\xi$ and $\xi'$ to $\spec(K'')$ are isomorphic.
For any open substack $\sU$ of $\sX$, the points of $\sU$ form a subset of $\sX\tp$; we define the Zariski topology on $\sX\tp$ by declaring such subsets open.

\begin{Lem} \label{closedimbed}
Let $\sX\tp$ be the set of points of $\sX$ equipped with the Zariski topology. Given
closed $\bfZ\subset\sX\tp$ and $\F\in D^b_{coh}(\sX)$ such that $\supp\F\subset\bfZ$,
there exists a closed substack $i_{\sZ}:\sZ\hookrightarrow\sX$ with $\sZ\tp\subset\bfZ$ such that
$$\F\cong i_{Z*} (\F_Z)\text{ for some }\F_Z\in D^b_{coh}(\sZ).$$
\end{Lem}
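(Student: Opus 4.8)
The plan is to reduce the statement to a well-chosen closed substack defined by a coherent sheaf of ideals, and then to descend the given complex $\F$ to that substack using the standard fact that a complex set-theoretically supported on a closed subscheme comes (up to isomorphism) from that subscheme once we thicken it appropriately. First I would construct the substack $\sZ$. The support $\supp\F\subset\sX\tp$ is by hypothesis contained in the closed set $\bfZ$; since $\F\in D^b_{coh}(\sX)$ there is a coherent sheaf of ideals $\I\subset\cO_\sX$ cutting out $\supp\F$ with its reduced structure (work on a presentation $\pi:X\to\sX$, take the radical ideal of $\pi^*\F$, and note it is $\pi$-equivariant hence descends, using Lemma~\ref{union} and descent for coherent sheaves). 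Shrinking if necessary, any closed substack with underlying set contained in $\bfZ$ and containing $\supp\F$ will do; I will take $\sZ$ defined by a power of $\I$, with the power chosen later.

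Next I would produce $\F_\sZ$. Let $i=i_\sZ:\sZ\imbed\sX$ be the closed embedding. The functor $i_*:D^b_{coh}(\sZ)\to D^b_{coh}(\sX)$ is fully faithful onto the subcategory of complexes with cohomology sheaves killed by the ideal of $\sZ$; so the content is that, for a suitable choice of the defining power $\I^N$, every cohomology sheaf $\cH^k(\F)$ is annihilated by $\I^N$. Since $\F$ is bounded, there are finitely many cohomology sheaves $\cH^k(\F)$; each is a coherent sheaf set-theoretically supported on $V(\I)$, hence (again checking on a Noetherian presentation $X$, where $\pi^*\cH^k(\F)$ is a coherent sheaf on the Noetherian scheme $X$ supported on $V(\pi^*\I)$, so is killed by some power of $\pi^*\I$, and then taking a single $N$ that works for all $k$ and descends) is annihilated by $\I^N$ for $N\gg 0$. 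Define $\sZ$ by $\I^N\subset\cO_\sX$ for this $N$. Then $\F$, viewed via $i^*$ or more precisely via the fact that its cohomology sheaves are $\cO_\sZ$-modules, lies in the essential image of $i_*$: concretely, represent $\F$ by a bounded-above complex of coherent sheaves (Corollary~\ref{odinhren}), and use that a bounded complex all of whose cohomology is an $\cO_\sZ$-module is isomorphic in $D^b_{coh}(\sX)$ to $i_*$ of an object of $D^b_{coh}(\sZ)$ — this is the standard ``restriction of scalars'' statement, which I would deduce by an $t$-truncation or d\'evissage argument, building $\F_\sZ$ as an iterated extension of the shifted cohomology sheaves $\cH^k(\F)[-k]$, each of which already lives on $\sZ$. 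Boundedness of $\F$ is what makes this finite process terminate.

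The one point requiring care — and the main obstacle — is the passage between statements on the stack $\sX$ and statements on a presentation $X$, specifically that the ideal of the reduced support, and the annihilator power $N$, are genuinely $\cO_\sX$-ideals and not merely $\cO_X$-ideals. For the radical of a pulled-back coherent sheaf's annihilator this follows because formation of the radical commutes with the flat (indeed Gorenstein) base change $X_1\rightrightarrows X$ in the coskeleton, so the ideal is cartesian and descends by Claim~\ref{simplicial} applied to the abelian categories; likewise $\pi^*(\I^N)=(\pi^*\I)^N$ since $\pi$ is flat, so the power is computed compatibly. A second, minor technical point is that $D^b_{coh}(\sZ)\to D^b_{coh}(\sX)$ being fully faithful with the expected image is itself a descent-type statement: it reduces, via the equivalences $D^b_{coh}(\sX)\simeq D^b(\coh(\sX))$ and likewise for $\sZ$ established above, to the purely categorical fact that for a closed immersion of Noetherian schemes $i$, $i_*$ identifies $D^b(\coh(\sZ))$ with the full subcategory of $D^b(\coh(\sX))$ of complexes with cohomology supported scheme-theoretically on $\sZ$, which one checks on the presentation. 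Everything else is the routine d\'evissage sketched above, and the boundedness hypothesis $\F\in D^b_{coh}$ is used exactly to keep all the inductions finite.
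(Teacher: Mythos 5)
Your construction of $\sZ$ by annihilating the \emph{cohomology sheaves} of $\F$ does not work, and the ``standard restriction of scalars'' statement you invoke is false: a bounded complex all of whose cohomology sheaves are $\cO_\sZ$-modules need not lie in the essential image of $i_{\sZ*}:D^b_{coh}(\sZ)\to D^b_{coh}(\sX)$. Concretely, take $\sX=\spec R$ with $R=k[x,y]$, $\mathfrak m=(x,y)$, $\bfZ=\{\mathfrak m\}$, choose a nonzero class $\eta\in\Ext^2_R(k,k)\cong k$, and set $\F=\cone(k\overset{\eta}\to k[2])$. All cohomology sheaves of $\F$ are copies of $k=R/\mathfrak m$, so your recipe gives $N=1$ and $\sZ=\spec(R/\mathfrak m)$; but every object of $D^b(\coh(\spec k))$ is a direct sum of shifts of $k$, whereas $\F$ is not (the gluing class between its two cohomology sheaves is $\eta\ne0$). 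Hence $\F\not\cong i_{\sZ*}(\F_\sZ)$ for this $\sZ$; a strictly larger thickening is needed, and how large cannot be read off from the annihilators of the cohomology sheaves alone. The same example locates the gap in your d\'evissage: the connecting morphisms in the Postnikov tower of $\F$ are morphisms in $D(\sX)$ between pushforwards from $\sZ$, and to realize the iterated extension over $\sZ$ you would need to lift them along $i_{\sZ*}$ --- but $i_{\sZ*}$ is not full on derived categories (already $\Ext^1_{\mathbb A^2}(k,k)=k^2\ne0=\Ext^1_{\spec k}(k,k)$), so these lifts need not exist.

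The correct move --- and what the paper actually does --- is to control the \emph{terms} of a representing complex rather than its cohomology. One first represents $\F$ by a bounded-below complex of objects of $\qcoh_\bfZ(\sX)$ that are injective in $\qcoh(\sX)$ (the torsion subcategory $\qcoh_\bfZ(\sX)$ has enough such objects), truncates to get a bounded complex of quasi-coherent sheaves each term of which is supported on $\bfZ$, and then applies Corollary~\ref{odinhren} to pass to a quasi-isomorphic bounded subcomplex of coherent sheaves, still with every term supported on $\bfZ$. Since there are finitely many terms and each coherent term is killed by some power of the radical ideal $\I$ of $\bfZ$, a single power $\I^N$ annihilates the whole complex, which is then literally a complex of $\cO_\sZ$-modules for $\sZ=V(\I^N)$. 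Your reductions to a presentation and the descent of $\I$ are fine, but without producing a representative whose terms are supported on $\bfZ$, the choice of $N$ and the descent to $\sZ$ cannot be completed.
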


\begin{proof}
Let $\qcoh_\bfZ(\sX)\subset\qcoh(\sX)$ be the full subcategory of sheaves supported by $\bfZ$.
It is easy to see that the $\qcoh_\bfZ(\sX)$ has enough objects that are injective in $\qcoh(\sX)$; therefore, $\F$ can be represented by a (bounded from below) complex of injective objects of $\qcoh_\bfZ(\sX)$.
Truncation gives a bounded complex whose terms need not be injective. 
Finally, by Corollary~\ref{odinhren} we see that $\F$ can be represented by a complex of coherent sheaves supported by $\bfZ$. This implies the statement.
\end{proof}

\subsection{Dualizing complexes on stacks} As before, $\sX$ is a Noetherian algebraic stack. Recall
(\cite[\S V.2]{H}) that the dualizing complex on a Noetherian scheme $X$ is an object
$\DC_X\in D^b_{coh}(X)$ of finite injective dimension (in $\qcoh(X)$) such that
every $\F\in D^b_{coh}(X)$ is $\DC_X$-reflexive. That is, the natural transformation
$$\F\to\uHom(\uHom(\F,\DC),\DC)\quad(\F\in D^b_{coh}(X))$$
is an isomorphism.

For a flat Gorenstein morphism $\pi:Y\to X$ of finite type, we denote by
$\omega_{Y/X}$ the relative dualizing sheaf, which is a line bundle on $Y$. 
Let us include the appropriate cohomological shift by $\dim(Y/X)$ 
in $\omega_{Y/X}$, so that 
\begin{equation}\label{pi!}
\pi^!(\F)=\pi^*(\F)\otimes\omega_{Y/X}\quad(\F\in D^b_{coh}(X)),
\end{equation}
as in \cite[Remark on pp. 143--144]{H}. 

\begin{Rem} Let us review the construction of $\omega_{Y/X}$ sketched in 
\cite[Section~7.4]{H}. As before, $\pi:Y\to X$ is a flat Gorenstein morphism of finite
type. It is easy to define $\pi^!$ if $\pi$ is embeddable (that is, $\pi$
is a composition of a closed embedding and a smooth morphism). If $\pi$ is embeddable,
we set $\omega_{Y/X}=\pi^!(\cO_X)$. The construction of $\omega_{Y/X}$ is local on $Y$, so we 
can define it for arbitrary (not necessarily embeddable) $\pi$ by passing to a cover of $Y$.
Indeed, every morphism of finite type is locally embeddable.
We can then use \eqref{pi!} to define $\pi^!$ for arbitrary $\pi$.

Note that the construction of $\omega_{Y/X}$ is also local in flat topology of $X$; this
allows us to define the relative dualizing sheaf and $\pi^!$ if $\pi$ is a  representable flat Gorenstein morphism of finite type between Noetherian algebraic stacks.
\end{Rem}

The notion of a dualizing complex is local with respect to flat Gorenstein covers:
\begin{Lem} \label{locdual}
Let $\pi:Y\to X$ be a faithfully flat Gorenstein morphism of finite type between Noetherian schemes. Then $\F\in D^b_{coh}(X)$ is a dualizing complex on $X$ if and only if
$\pi^*\F$ is a dualizing complex on $Y$. 
\end{Lem}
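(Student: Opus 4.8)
The plan is to deduce both directions from one computation with local rings, via the pointwise characterization of dualizing complexes \cite[Ch.~V]{H}: on a Noetherian scheme $W$, an object $\mathcal{K}\in D^b_{coh}(W)$ is dualizing if and only if it has finite injective dimension in $\qcoh(W)$ and, for every $w\in W$, the complex $R\Hom_{\cO_{W,w}}(k(w),\mathcal{K}_w)$ is isomorphic to $k(w)[n_w]$ for a single $n_w\in\Zet$ (equivalently, $\Ext^i_{\cO_{W,w}}(k(w),\mathcal{K}_w)$ is one-dimensional for exactly one $i$ and vanishes otherwise). So it suffices to show that $\F$ satisfies these two conditions on $X$ precisely when $\pi^*\F$ satisfies them on $Y$.

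The technical core is a local Bass-number formula. Fix $y\in Y$, put $x=\pi(y)$, and write $A=\cO_{X,x}$, $B=\cO_{Y,y}$, and $C=\cO_{Y_x,y}$ for the local ring at $y$ of the fibre $Y_x=Y\times_X\spec k(x)$; thus $C=B\otimes_A k(x)$, the homomorphism $A\to B$ is flat and local, and --- crucially --- $C$ is Gorenstein, because $\pi$ is a Gorenstein morphism. Let $d=\dim C<\infty$, so that $\Ext^i_C(k(y),C)$ is $k(y)$ for $i=d$ and zero for $i\ne d$. I claim
\begin{equation*}
\Ext^m_{\cO_{Y,y}}\bigl(k(y),(\pi^*\F)_y\bigr)\ \cong\ \Ext^{m-d}_{\cO_{X,x}}\bigl(k(x),\F_x\bigr)\otimes_{k(x)}k(y)\qquad(m\in\Zet).
\end{equation*}
To prove this, first apply flat base change for $R\Hom$ along $A\to B$ (valid because $k(x)$ is a finitely generated $A$-module and $\F_x$ is bounded), obtaining $W^\bullet:=R\Hom_B\bigl(C,(\pi^*\F)_y\bigr)\cong R\Hom_A(k(x),\F_x)\otimes_A B$. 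Its degree-$q$ cohomology is $\Ext^q_A(k(x),\F_x)\otimes_A B$, and since $\Ext^q_A(k(x),\F_x)$ is annihilated by the maximal ideal of $A$ it is a $k(x)$-vector space, say of dimension $a_q$; hence $H^q(W^\bullet)\cong C^{a_q}$ is free over $C$. Now combine the coinduction adjunction $R\Hom_B(k(y),-)\cong R\Hom_C\bigl(k(y),R\Hom_B(C,-)\bigr)$ with the hyper-$\Ext$ spectral sequence $E_2^{p,q}=\Ext^p_C\bigl(k(y),H^q(W^\bullet)\bigr)\Rightarrow\Ext^{p+q}_C(k(y),W^\bullet)$: since each $H^q(W^\bullet)$ is free over $C$ and $C$ is Gorenstein, $E_2^{p,q}$ equals $k(y)^{a_q}$ for $p=d$ and vanishes otherwise; the spectral sequence degenerates and the formula follows.

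Granting the formula, both conditions pass across $\pi$. The ring map $k(x)\hookrightarrow k(y)$ is a field extension, so $-\otimes_{k(x)}k(y)$ is faithfully flat; thus $\Ext^{j}_{\cO_{X,x}}(k(x),\F_x)$ is nonzero in exactly the degrees $j$ for which $\Ext^{j+d}_{\cO_{Y,y}}(k(y),(\pi^*\F)_y)$ is nonzero, with matching dimensions. In particular $\mathrm{injdim}_{\cO_{X,x}}\F_x+d=\mathrm{injdim}_{\cO_{Y,y}}(\pi^*\F)_y$, and $R\Hom_{\cO_{X,x}}(k(x),\F_x)\cong k(x)[n]$ holds iff $R\Hom_{\cO_{Y,y}}(k(y),(\pi^*\F)_y)\cong k(y)[n-d]$. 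Taking suprema of these relations over all points of $X$ and of $Y$ --- using that $\pi$ is surjective (so each $x\in X$ is $\pi(y)$ for some $y$), that $d\ge 0$, and, for the direction starting from $\F$ dualizing, that the fibre dimensions $d$ are uniformly bounded since $\pi$ is of finite type over the Noetherian scheme $X$ --- one concludes that $\F$ has finite injective dimension on $X$ iff $\pi^*\F$ does on $Y$, and that $\F$ satisfies the pointwise condition on $X$ iff $\pi^*\F$ satisfies it on $Y$. With the cited characterization, this gives the lemma.

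The hard part is proving the displayed formula carefully. A priori $R\Hom_A(k(x),\F_x)$ need not be bounded above --- its boundedness is exactly the finiteness we are trying to establish --- so the flat base change and the hyper-$\Ext$ spectral sequence must be carried out for complexes that are merely bounded below, and convergence and degeneration have to be checked in that setting. This is also precisely where the Gorenstein hypothesis on $\pi$ enters in an essential way: it forces $\Ext^*_C(k(y),C)$, and hence the entire spectral sequence, to be concentrated in the single degree $d=\dim C$, which is what collapses the computation to the clean formula; for a merely flat morphism $\pi$ this step would fail. The remaining inputs --- the pointwise characterization of dualizing complexes, flat base change for $R\Hom$, the coinduction adjunction, and faithful flatness of a field extension --- are standard.
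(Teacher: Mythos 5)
Your argument is correct, but it takes a genuinely different route from the paper's. The paper treats the two directions separately: for the ``only if'' direction it simply quotes Hartshorne's theorem that $\pi^!$ carries dualizing complexes to dualizing complexes (\cite[Theorem~V.8.3]{H}) and notes that $\pi^*\F$ differs from $\pi^!\F$ by a line-bundle twist; for the ``if'' direction it uses the compatibility $\pi^*R\uHom(\G,\F)=R\uHom(\pi^*\G,\pi^*\F)$ and faithful flatness of $\pi$ to descend both reflexivity and the uniform vanishing of $\uExt^i(\G,\F)$, and then invokes \cite[Proposition~II.7.20]{H} to upgrade finite local injective dimension to finite injective dimension. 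You instead handle both directions at once through the pointwise characterization of dualizing complexes, by establishing the Bass-number formula
$\Ext^m_{\cO_{Y,y}}(k(y),(\pi^*\F)_y)\cong\Ext^{m-d}_{\cO_{X,x}}(k(x),\F_x)\otimes_{k(x)}k(y)$
for a flat local homomorphism with Gorenstein closed fibre of dimension $d$ --- in effect re-proving a special case of the ascent--descent theorem for dualizing complexes along locally Gorenstein homomorphisms. Your computation (flat base change for $R\Hom$ along $A\to B$, freeness of the cohomology of $R\Hom_B(C,-)$ over $C$, the coinduction adjunction, and collapse of the hyper-$\Ext$ spectral sequence because $\Ext^*_C(k(y),C)$ sits in the single degree $d$) is sound, and it has the virtue of avoiding the $f^!$ formalism entirely while making visible exactly where the Gorenstein hypothesis is used; the cost is more local commutative algebra than the paper needs. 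Two places deserve sharper citations: the equivalence between finite injective dimension in $\qcoh(X)$ and uniform boundedness of the stalkwise injective dimensions, which you use implicitly when ``taking suprema,'' is precisely \cite[Proposition~II.7.20]{H} (the same input the paper uses), and the reduction of the reflexivity condition in the definition of a dualizing complex to the stalkwise statement is \cite[V.2.1 and V.3.4]{H}, so it is better to cite those results explicitly than Chapter~V generically.
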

\begin{proof} The 'only if' direction is clear: if $\F$ is a dualizing complex on $X$, 
then $\pi^!\F$ is a dualizing complex on $Y$ 
(\cite{H}, Theorem~V.8.3 and the remark following Corollary~V.8.4). 
Since $\pi^*\F$ differs from $\pi^!\F$ by a twist by a line bundle, it is also dualizing. 

For the 'if' direction, we note that
$$\pi^*R\uHom(\G,\F)=R\uHom(\pi^*\G,\pi^*\F)\quad\text{for all }\G\in D^b_{coh}(X).$$ 
Therefore, $\F$ has finite local injective dimension:
for $i\gg0$, we have $\uExt^i(\G,\F)=0$ for all $\G\in\coh(X)$. Also, all $\G\in D^b_{coh}(X)$ 
are $\F$-reflexive. It remains to notice that $\F$ has finite injective dimension by
\cite[Proposition~II.7.20]{H}.
\end{proof}

\begin{Def}\label{defdual} Let $\sX$ be a Noetherian stack. A \emph{dualizing complex} on $\sX$
is an object $\DC_\sX\in D^b_{coh}(\sX)$ such that for a presentation $\pi:X\to\sX$,
$\pi^*\DC_\sX$ is a dualizing complex on $X$. 
\end{Def}

By Lemma~\ref{locdual}, Definition~\ref{defdual} does not depend on the presentation $\pi$.

\begin{Ex} Suppose the ground scheme $S$ is Noetherian and admits a dualizing complex. 
Let $G$ be a separated Gorenstein $S$-group scheme of finite type acting on a $S$-scheme $X$ of finite type. 
By Definition~\ref{defdual}, a dualizing complex $\DC_\sX$ on $\sX=X/G$ can be viewed as a $G$-equivariant dualizing complex on $X$: a $G$-equivariant complex is an equivariant dualizing complex if and only if it is a dualizing complex on $X$.
Proposition~\ref{dual_exist} shows that $\DC_\sX$ exists.
\end{Ex}

\begin{Rem*} $\DC_\sX$ has finite local injective dimension, and all
objects of $D^b_{coh}(\sX)$ are $\DC_\sX$-reflexive. 
On the other hand, the injective dimension of $\DC_\sX$ may be infinite. 

For instance, let $\sX$ be the classifying stack of a group $G$ over a field $\kk$. Then $\qcoh(\sX)$ is identified
with the category of representations of $G$, and we can take $\DC_\sX$ to be the trivial one-dimensional 
representation of $G$. 
Its injective dimension may be infinite. For example, this happens if $\Char(\kk)>0$ and 
$G$ is a cyclic group of order $\Char(\kk)$.
We are grateful to Leonid Positselski for pointing this out. 
\end{Rem*}

\begin{Prop}\label{dual_exist} Suppose $\sX$ is of finite type over a Noetherian scheme $S$ that admits a dualizing complex.
Then $\sX$ admits a dualizing complex. 
\end{Prop}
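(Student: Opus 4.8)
The plan is to reduce the existence of a dualizing complex on $\sX$ to the known existence of a dualizing complex on a scheme, using descent along a presentation. First I would choose, by Lemma~\ref{semiseparated}, a presentation $\pi:X\to\sX$ with $X$ an affine scheme and $\pi$ an affine morphism. Since $\sX$ is of finite type over $S$ and $\pi$ is of finite type, $X$ is of finite type over $S$; as $S$ is Noetherian and admits a dualizing complex $\DC_S$, the scheme $X$ admits a dualizing complex, namely $\DC_X=f^!\DC_S$ where $f:X\to S$ is the structure morphism (see \cite[\S V.10]{H}). The whole problem is then to descend $\DC_X$ from $X$ to $\sX$, i.e. to produce an object $\DC_\sX\in D^b_{coh}(\sX)$ with $\pi^*\DC_\sX$ a dualizing complex on $X$; by Lemma~\ref{locdual}, any object whose pullback to $X$ is dualizing will automatically be dualizing on $\sX$ in the sense of Definition~\ref{defdual}.

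The key point is that $\DC_X$ descends up to canonical isomorphism along the coskeleton $X_\bullet$ of $\pi$. Write $\pi_i:X_i\to\sX$ and let $p,q:X_1=X\times_\sX X\to X$ be the two projections; more generally let the face maps of $X_\bullet$ be the various projections. Because each such projection is obtained by base change from $\pi$, it is a faithfully flat Gorenstein morphism of finite type, hence carries an invertible relative dualizing sheaf $\omega$ (with its cohomological shift) and a functor $(\,\cdot\,)^!=(\,\cdot\,)^*\otimes\omega$ as in \eqref{pi!}. Uniqueness of the dualizing complex up to a shifted line bundle, together with the compatibility of $\pi^!$ with composition and flat base change (\cite[Theorem~V.8.3]{H} and the remarks after Corollary~V.8.4), gives a canonical isomorphism $p^*\DC_X\cong q^*\DC_X$ after twisting by the appropriate line bundle; since $p^!\DC_X$ and $q^!\DC_X$ are each the dualizing complex $\DC_{X_1}$ on $X_1$, and $p^*,q^*$ differ from $p^!,q^!$ by the line bundles $\omega_{X_1/X}^{-1}$ on the two factors, which agree, we get $p^*\DC_X\cong q^*\DC_X$ canonically. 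One then checks the cocycle condition on $X_2$ by the same bookkeeping with relative dualizing sheaves and the associativity of $(\,\cdot\,)^!$. Thus $\DC_X$ defines a cartesian object of $D^b_{coh}(X_\bullet)$, and by Claim~\ref{simplicial} (together with the identification $D^b_{coh}(\sX)\simeq D^b(\coh(\sX))$ and Claim~\ref{derivedqcoh}) this cartesian object comes from a well-defined $\DC_\sX\in D^b_{coh}(\sX)$ with $\pi^*\DC_\sX\cong\DC_X$.

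Finally I would invoke Lemma~\ref{locdual}: since $\pi^*\DC_\sX\cong\DC_X$ is a dualizing complex on $X$ and $\pi$ is faithfully flat Gorenstein of finite type, $\DC_\sX$ is a dualizing complex on $\sX$ by Definition~\ref{defdual}. The main obstacle is the middle step: making the descent of $\DC_X$ genuinely canonical, i.e. verifying that the gluing isomorphism on $X_1$ satisfies the cocycle condition on $X_2$. Naively the dualizing complex is unique only up to a shifted line bundle, so one must pin down the isomorphisms using the functorial properties of the upper-shriek functor (base change, composition) for Gorenstein morphisms rather than abstract uniqueness; the shifts $\dim(X_i/\sX)$ and the twists $\omega_{X_i/X_j}$ must be tracked carefully so that everything cancels on $X_2$. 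Once this is done, the descent statement of Claim~\ref{simplicial} does the rest. (One subtlety worth a remark: $\DC_\sX$ need not have finite injective dimension globally, only finite \emph{local} injective dimension, which is all that descends and all that is needed; this is consistent with the Remark preceding the Proposition.)
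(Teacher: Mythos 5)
Your overall strategy is the paper's: pull back $\DC_S$ to the presentation, and descend the resulting dualizing complex from $X$ to $\sX$. But there are two genuine gaps at exactly the two delicate points.

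First, the bookkeeping with relative dualizing sheaves does not close up the way you assert. Writing $p,q:X\times_\sX X\to X$ for the projections, the relative dualizing sheaf of $p$ is, by base change, canonically $q^*\omega_{X/\sX}$, while that of $q$ is $p^*\omega_{X/\sX}$. So your claim that the two line bundles $\omega_{X\times_\sX X/X}^{-1}$ ``agree'' amounts to producing an isomorphism $p^*\omega_{X/\sX}\cong q^*\omega_{X/\sX}$ (compatible with the cocycle condition), i.e.\ to descending $\omega_{X/\sX}$ itself to $\sX$ --- which is not automatic and in general false (for $\sX$ the classifying stack of a smooth group $G$ and $X=\spec\kk$, the descent datum for $\omega_{X/\sX}$ is the character $\det\mathrm{Ad}_G$, which need not be trivial). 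Consequently $f_X^!\DC_S$ need not descend at all. The paper's fix is precisely to descend the twisted object $\F=f_X^!(\DC_S)\otimes\omega_{X/\sX}^{-1}$ instead: using \eqref{pi!} and multiplicativity of $\omega$ under composition, one gets $\pi_i^*\F=f_{X\times_\sX X}^!(\DC_S)\otimes\omega_{X\times_\sX X/\sX}^{-1}$ for $i=1,2$, an expression manifestly independent of the projection, and similarly on the triple product. Since the proposition only asks for \emph{some} dualizing complex, and a line-bundle twist of a dualizing complex is still dualizing, this twist costs nothing.

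Second, your gluing step is incomplete. What you construct is descent data in the derived category: an object on $X$, an isomorphism on $X\times_\sX X$, and a cocycle condition on the triple product. Claim~\ref{simplicial} identifies $D_{qcoh}(\sX)$ with \emph{cartesian objects of $D(\cO_{X_\bu})$}, but it does not say that derived descent data effectively descend --- in general they do not, for the usual homotopy-coherence reasons. The paper instead invokes \cite[Theorem~3.2.4]{BBD}, which permits gluing an object given locally on a site precisely when its negative local self-$\Ext$'s vanish; this applies here because $R\uHom(\DC_X,\DC_X)=\cO_X$ has no negative cohomology. You need this (or an equivalent coherence argument) to finish; the bare equivalence of Claim~\ref{simplicial} is not enough. (A minor further point: to write $f_X^!\DC_S$ one should arrange $f_X$ to be embeddable, e.g.\ by passing to a Zariski cover of $X$ as the paper does.)
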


\begin{proof} Let $\pi:X\to\sX$ be a presentation of $\sX$. To simplify matters further,
we may assume that $f_X:X\to S$ is embeddable (by replacing $X$ with its Zariski open cover). 
Fix a dualizing complex $\DC_S\in D^b_{coh}(S)$. Then $f_X^!(\DC_S)$ is a dualizing complex on
$X$, and so is
$$\F=f_X^!(\DC_S)\otimes\omega_{X/\sX}^{-1}\in D^b_{coh}(X),$$ 
since it is obtained from $f_X^!(\DC_S)$ by a twist by a line bundle. Let us show that $\F$ 
descends to $\sX$. 

By \cite[Theorem~3.2.4]{BBD} (see also \cite[Section~3.2.5]{BBD}), 
an object of the derived category of sheaves on a site can be given locally 
provided negative local $\Ext$'s from the object to itself vanish. Applying it to the flat covering
$\pi:X\to\sX$, we see that it is enough to provide
an isomorphism 
\begin{equation}\label{descent}
\pi_2^*\sX\simeq\pi_1^*\sX\in D^b_{coh}(X\times_\sX X)
\end{equation} 
satisfying the associativity constraint
on $X\times_\sX X\times_\sX X$. Here $\pi_{1,2}:X\times_\sX X\to X$ are the projections.

However, $\pi_{1,2}$ are Gorenstein, so $\pi_{1,2}^!$ and $\pi_{1,2}^*$ are related by 
\eqref{pi!}. Also, the relative canonical bundle agrees with composition, so 
$$\pi_i^*\omega_{X/\sX}\otimes\omega_{X\times_\sX X/X}=\omega_{X\times_\sX X/\sX}\quad(i=1,2).$$
We therefore see that
$$\pi_i^*\F=\pi_i^!\F\otimes\omega_{X\times_\sX X/X}^{-1}=f_{X\times_\sX X}^!(\DC_S)\otimes\omega_{X\times_\sX X/\sX}^{-1}\quad(i=1,2),$$ 
where $f_{X\times_\sX X}:X\times_\sX X\to S$.
This provides isomorphism \eqref{descent}. 

The associativity constraint is verified in a similar way: we identify the pull-backs of $\F$
to $X\times_\sX X\times_\sX X$ (with respect to three different projections on $X$) with
$$f_{X\times_\sX X\times_\sX X}^!(\DC_S)\otimes\omega_{X\times_\sX X\times_\sX X/\sX}^{-1},$$ and check that pull-backs of \eqref{descent} (with respect to three different projections on
$X\times_\sX X$) become the identity maps under this identification. This reduces to
functorial properties of pull-back $f^!$, namely, to associativity of isomorphism $(fg)^!=g^!f^!$ under composition of three morphisms (Condition~VAR~1 of \cite[Theorem~III.8.7]{H}).
\end{proof}

\begin{Rem*} The dualizing complex on $\sX$ can be constructed more explicitly if $\sX\to S$ is embeddable: 
there exists a closed embedding $\iota:\sX\hookrightarrow\sY$ into a smooth $S$-stack $\sY$. 
In this case, we can take
$$\DC_\sX=\iota^!(f_{\sY}^*(\DC_S)\otimes\omega_{\sY/S})$$ for $f_\sY:\sY\to S$.
(A less canonical choice is $\DC_\sX=\iota^!f_{\sY}^*(\DC_S)$.)

In particular, suppose $X$ is a normal quasiprojective variety over an algebraically closed field, and $G$
is a connected linear algebraic group acting on $X$. Sumihiro's embedding theorem (\cite{Su}, see also \cite{KK}) claims that 
there exists an action of $G$ on ${\mathbb P}^{\mathbf n}$ and a $G$-equivariant embedding
$X\hookrightarrow{\mathbb P}^{\mathbf n}$.  Thus the stack $\sX=X/G$ admits a closed embedding
$\sX\hookrightarrow{\mathbb P}^{\mathbf n}/G$.
\end{Rem*}

\subsection{$*$-restriction and $!$-restriction}
We now summarize the properties of restriction to a (not necessarily) closed point.
To simplify the exposition, we only use the functors on schemes. Let $X$ be a Noetherian scheme. 

\begin{Rem*} We do not need to assume that $X$ is semi-separated. This assumption is only used
in Claim~\ref{derivedqcoh}, but it is well known that the claim holds for locally Noetherian 
schemes (\cite[Corollary~II.7.19]{H}).
\end{Rem*}

Recall that $X\tp$ is the underlying topological space of $X$ equipped with the Zariski topology. Given a point $x\in X\tp$, we write $\ip_x :\{x\}\imbed X\tp$ for the natural embedding.

Let $\F$ be a quasi-coherent sheaf of $\cO_X$-modules on $X\tp$. 
Then $\ip_x^*\F$ is its stalk at $x$. Thus $\ip_x^*$ is an exact functor from $\qcoh(X)$ to
the category of modules over the local ring $\cO_x=\ip_x^*\cO_X$. We keep the same notation for its 
derived functor
$$\ip_x^*:D_{qcoh}(X)\to D(\cO_x\md).$$ Here $D(\cO_x\md)$ is the derived category of $\cO_x$-modules.

Denote by $\Gamma_x(\F)\subset\ip_x^*\F$ the submodule of sections with support in 
$\overline{\{x\}}$ (as in Variation~8 of \cite[IV.1]{H}). Note that $\Gamma_x(\F)$ is
a torsion $\cO_x$-module, that is, any its element is annihilated by a power of the maximal 
ideal of $\cO_x$. Actually, $\Gamma_x(\F)$ can be defined as the maximal torsion submodule 
of $\ip_x^*\F$. The functor $\Gamma_x$ is left exact, and we denote its derived functor by
$$\ip_x^!:D^+_{qcoh}(X)\to D^+(\cO_x\md).$$ 

The functor $\ip_x^!$ can be described as follows. Set $\bfZ=\overline{\{x\}}\subset X\tp$, 
and let $\ip_{\bfZ}:\bfZ\to X\tp$ be the closed embedding of topological spaces. 
For $\F\in\qcoh(X)$, we can identify $\Gamma_x(\F)$ with the stalk at $x$ of the 
$!$-restriction $\ip_{\bfZ}^!(\F)$. 

\begin{Lem} $\ip_x^!$ has finite cohomological dimension.
\end{Lem}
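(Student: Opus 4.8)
The plan is to identify the right derived functor $\ip_x^!$ with local cohomology at the closed point of $\spec\cO_x$ and then quote the classical finiteness of local cohomology over a Noetherian local ring. Write $\mathfrak{m}_x\subset\cO_x$ for the maximal ideal, and let $\Gamma_{\mathfrak{m}_x}$ be the functor sending an $\cO_x$-module $M$ to its maximal $\mathfrak{m}_x$-power-torsion submodule. By the description of $\Gamma_x$ recalled above, $\Gamma_x=\Gamma_{\mathfrak{m}_x}\circ\ip_x^*$. The stalk functor $\ip_x^*$ is exact, so once one knows that it carries injective objects of $\qcoh(X)$ to $\Gamma_{\mathfrak{m}_x}$-acyclic $\cO_x$-modules, it follows that $\ip_x^!\cong R\Gamma_{\mathfrak{m}_x}\circ\ip_x^*$, and hence $R^i\ip_x^!(\F)\cong H^i_{\mathfrak{m}_x}(\ip_x^*\F)$ for all $\F$ and $i$. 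For the required acyclicity I would invoke the standard fact that the stalk at $x$ of an injective quasi-coherent sheaf on a Noetherian scheme is an injective $\cO_x$-module (restrict to an affine open around $x$, where injective quasi-coherent sheaves correspond to injective modules, and use that the localization of an injective module over a Noetherian ring is again injective; see \cite{H}).

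It then remains to bound the cohomological dimension of $R\Gamma_{\mathfrak{m}_x}$, and here finiteness is precisely the point that $\cO_x$, being a Noetherian local ring, has finite Krull dimension $d=\dim\cO_x$. Choosing a system of parameters $f_1,\dots,f_d\in\mathfrak{m}_x$ (elements with $\sqrt{(f_1,\dots,f_d)}=\mathfrak{m}_x$), the \v Cech complex
$$0\to M\to\bigoplus_{i} M_{f_i}\to\bigoplus_{i<j}M_{f_if_j}\to\cdots\to M_{f_1\cdots f_d}\to 0,$$
concentrated in cohomological degrees $0,\dots,d$, computes $R\Gamma_{\mathfrak{m}_x}(M)$ for an arbitrary $\cO_x$-module $M$; in particular $H^i_{\mathfrak{m}_x}(M)=0$ for $i>d$. (Alternatively one may quote Grothendieck's vanishing theorem directly.) Combining this with the previous paragraph gives $R^i\ip_x^!(\F)=0$ for every $\F\in\qcoh(X)$ and every $i>\dim\cO_x$, which is exactly the assertion that $\ip_x^!$ has finite cohomological dimension.

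I expect the only delicate point to be the identification $\ip_x^!\cong R\Gamma_{\mathfrak{m}_x}\circ\ip_x^*$, i.e.\ the claim that the naive composition of derived functors is correct — equivalently, the good behaviour of injective quasi-coherent sheaves under passage to the stalk $\spec\cO_x$. Everything downstream of that is the classical finiteness of local cohomology over a Noetherian local ring, so in the write-up I would concentrate on pinning down the precise reference for that input and keep the rest brief.
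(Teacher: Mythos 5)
Your argument is correct, but it takes a genuinely different route from the paper's. You work locally over the ring $\cO_x$: after checking that the stalk at $x$ of an injective object of $\qcoh(X)$ is an injective $\cO_x$-module (true on a Noetherian scheme, by restricting to an affine neighbourhood and using that localization preserves injectivity of modules over a Noetherian ring), you identify $\ip_x^!$ with $R\Gamma_{\mathfrak{m}_x}\circ\ip_x^*$ and bound the local cohomology by the \v Cech complex on a system of parameters, obtaining the explicit bound $\dim\cO_x$. The paper instead argues globally: writing $\bfZ=\overline{\{x\}}$, it reduces to showing that $\ip_\bfZ^!$ has finite cohomological dimension (the passage to the stalk at $x$ being exact), and reads this off from the distinguished triangle $\ip_{\bfZ*}\ip^!_\bfZ\F\to\F\to j_*j^*(\F)\to\ip_{\bfZ*}\ip^!_\bfZ\F[1]$, where $j$ is the embedding of the open complement of $\bfZ$, using that $j_*$ has finite cohomological dimension on a Noetherian scheme. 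Your route requires the extra input about stalks of injectives --- which you correctly flag as the one delicate point, and which does hold --- and in exchange yields a sharper, purely local bound; the paper's route avoids local cohomology altogether at the cost of invoking the finite cohomological dimension of $j_*$, which itself ultimately rests on a \v Cech-type argument over a finite affine cover of the open complement. Either proof is acceptable here.
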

\begin{proof} It is enough to show that $\ip_{\bfZ}^!$ has finite cohomological dimension for
$\bfZ=\overline{\{x\}}\subset X\tp$. We have a natural distinguished triangle 
$$\ip_{\bfZ*}\ip^!_\bfZ \F \to \F \to j_*j^*(\F)\to \ip_{\bfZ*}\ip^!_\bfZ \F[1]\qquad(\F\in D^+_{qcoh}(X))$$
in $D_{qcoh}(X)$. Here $j$ is the embedding of the open subscheme $X\tp-\bfZ$. It remains 
to notice that $j_*$ has finite cohomological dimension.
\end{proof}

\begin{Rem*} Note that $\ip_\bfZ^!\F$ is not quasi-coherent: it is a (complex of) $\ip_\bfZ^*\cO_X$-modules. On the other hand, $\ip_{\bfZ*}\ip_\bfZ^!\F$ is quasi-coherent. In the notation of 
\cite{H} (Section~IV.1, Variation~3),  $\ip_{\bfZ*}\ip_\bfZ^!\F=R\underline{\Gamma}_\bfZ(\F)$.
\end{Rem*}

Let us compare the topological restriction functors $\ip_x^*$ and $\ip_x^!$ with their
$\cO$-module counterparts. Let $Z\subset X$ be a locally closed subscheme; consider the embedding
$i_Z:Z\hookrightarrow X$. We then have two restriction functors
$$i_Z^*:D^-_{qcoh}(X)\to D^-_{qcoh}(Z)\quad\text{and}\quad i_Z^!:D^+_{qcoh}(X)\to D^+_{qcoh}(Z),$$
which are derived of a right exact and a left exact functor, repectively.

\begin{Ex} If $Z\subset X$ is open, $i_Z^*=i_Z^!$ is the restriction functor. If $Z\subset X$
is closed, 
$$i_Z^*(\F)=\F\overset{L}\otimes_{\cO_X}\cO_Z\quad\text{and}\quad i_Z^*(\F)=R\uHom_{\cO_X}(\cO_Z,\F).$$
The functors for general locally closed embedding $i_Z$ are obtained by composing these two 
special cases.
\end{Ex}

Note that the functors $i_Z^*$ and $i_Z^!$ preserve coherence. Also, they make
sense for locally closed embeddings of stacks. 

\begin{Lem}\label{equiv} Let    
  $Z\subset X$ be a locally closed 
 subscheme,   and
 $n$ be an integer. Let $x \in X\tp$ be a generic point of $Z$. Then

\begin{enumerate}
\item\label{equiv:a}  For $\F \in D^-_{coh}(X)$ we have $\ip_x ^*(\F)\in D^{\leq n}(\cO _x\md)$ if and only if
there  exists an open subscheme $Z^0\subset Z$, $Z^0\owns x$
 such that $i_{Z^0}^*(\F)\in D^{\leq n}_{coh}(Z_0)$;

\item\label{equiv:b}  For $\F \in D^+_{coh}(X)$ 
we have $\ip_x ^!(\F)\in D^{\geq n}(\cO_x\md)$ 
if and only if there
  exists  an open  subscheme $Z^0\subset Z$, $Z^0\owns x$
such that $i_{Z^0}^!(\F )\in D^{\geq n}_{coh} (Z_0)$.
\end{enumerate}
\end{Lem}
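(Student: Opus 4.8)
The plan is to reduce both statements to a statement about stalks at $x$ of the cohomology sheaves of $i_{Z}^*\F$ and $i_Z^!\F$ respectively, and then use that the local ring $\cO_x$ is, up to localization, a stalk of $Z$ at its generic point. More precisely, fix a locally closed subscheme $Z\subset X$ with $x$ a generic point, and consider the embedding $i_Z:Z\imbed X$. Since $i_Z^*$ and $i_Z^!$ preserve coherence (as noted before the lemma), $i_Z^*(\F)\in D^-_{coh}(Z)$ and $i_Z^!(\F)\in D^+_{coh}(Z)$; the issue is only about the range of nonvanishing cohomology. The key observation is that $\ip_x^*$ on $D_{qcoh}(X)$ factors (up to the obvious identification of $\cO_x$ with the local ring of $Z$ at $x$, which holds because $x$ is generic in $Z$ so $Z$ and $X$ have the same local ring there) through $i_Z^*$: namely $\ip_x^*(\F)=(i_Z^*\F)_x$, the stalk at $x$ of the complex of coherent sheaves $i_Z^*\F$ on $Z$.

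For part~\eqref{equiv:a}: the ``if'' direction is immediate, since $i_{Z^0}^*\F\in D^{\leq n}_{coh}(Z^0)$ forces every stalk, in particular the one at $x$, to lie in $D^{\leq n}$, and $\ip_x^*\F=(i_{Z^0}^*\F)_x$ because $x\in Z^0$. For the ``only if'' direction, suppose $\ip_x^*(\F)\in D^{\leq n}(\cO_x\md)$. Then $H^j(i_Z^*\F)_x=H^j(\ip_x^*\F)=0$ for $j>n$. Each $H^j(i_Z^*\F)$ is a coherent sheaf on $Z$, and only finitely many of them are nonzero (as $\F\in D^-_{coh}$). For each of the finitely many $j>n$ with $H^j(i_Z^*\F)\neq 0$, the support of this coherent sheaf is a closed subset of $Z$ not containing $x$ (its stalk at $x$ vanishes); here I use that $x$, being a generic point of $Z$, is not in the closure of any proper closed subset avoiding it --- more precisely the complement of the union of these finitely many supports is an open subset $Z^0\subset Z$ containing $x$. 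On $Z^0$ all $H^j(i_Z^*\F)$ with $j>n$ vanish, so $i_{Z^0}^*(\F)=i_Z^*(\F)|_{Z^0}\in D^{\leq n}_{coh}(Z^0)$.

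For part~\eqref{equiv:b}: the structure is the same, but now I need the analogue of the factorization $\ip_x^!(\F)=(i_Z^!\F)_x$. This holds because the torsion-sections functor $\Gamma_x$ and its derived functor $\ip_x^!$ were characterized (just before Lemma~\ref{equiv}) via $\ip^!_\bfZ$ for $\bfZ=\overline{\{x\}}$, and for a locally closed $Z$ with $x$ generic one has, after restricting to a suitable neighbourhood, $\overline{\{x\}}\cap Z$ open in $\overline{\{x\}}$, so the stalk at $x$ of $i_Z^!\F$ computes $\ip_x^!\F$. Concretely: write $i_Z=j\circ i_{\Zbar}$ as an open embedding of $Z$ into its closure $\Zbar$ followed by the closed embedding $i_{\Zbar}:\Zbar\imbed X$; then $i_{\Zbar}^!\F=R\uHom_{\cO_X}(\cO_{\Zbar},\F)$, whose stalk at $x$ is $R\Hom_{\cO_x}(\cO_{\Zbar,x},\ip_x^*\F)$, and since $x$ is generic in $\Zbar$ the ring $\cO_{\Zbar,x}$ is an Artinian quotient of $\cO_x$ whose spectrum is $\overline{\{x\}}$ locally near $x$, so this agrees with $\Gamma_x$ applied to the cohomology of $\ip_x^*\F$, i.e. with $\ip_x^!\F$. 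Granting this, the ``if'' direction is again immediate, and for ``only if'' one argues exactly as in~\eqref{equiv:a}: $\ip_x^!(\F)\in D^{\geq n}$ means $H^j(i_Z^!\F)_x=0$ for $j<n$; there are only finitely many $j$ with $H^j(i_Z^!\F)\neq 0$ (as $\F\in D^+_{coh}$ and $i_Z^!$ has finite cohomological dimension on such $\F$, bounded-above cohomology coming from the finite injective dimension locally), so removing the finitely many supports of $H^j(i_Z^!\F)$ with $j<n$ gives an open $Z^0\owns x$ on which $i_{Z^0}^!(\F)\in D^{\geq n}_{coh}(Z^0)$; here I use that $i_{Z^0}^!$ agrees with the restriction of $i_Z^!$ to the open $Z^0$, which is clear since $!$-restriction commutes with further open restriction.

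The main obstacle I expect is the bookkeeping in part~\eqref{equiv:b} identifying $\ip_x^!\F$ with the stalk at $x$ of $i_Z^!\F$: one must be careful that $i_Z^!$ for locally closed $Z$ is ``local'' in the right sense and that passing to the generic point of $Z$ correctly recovers the functor $\Gamma_x$ defined via sections supported on $\overline{\{x\}}$. Once the two factorization identities $\ip_x^*\F\cong(i_Z^*\F)_x$ and $\ip_x^!\F\cong(i_Z^!\F)_x$ are in place, the rest is the elementary observation that a coherent sheaf on $Z$ vanishes in a neighbourhood of a point exactly when its stalk there vanishes, combined with the fact that only finitely many cohomology sheaves are nonzero --- which lets one shrink $Z$ to the desired $Z^0$ uniformly in the finitely many relevant degrees.
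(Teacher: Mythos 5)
There is a genuine gap, and it sits exactly at your ``key observation''. If $x$ is a generic point of a locally closed subscheme $Z\subset X$, then $\cO_{Z,x}$ is the \emph{Artinian quotient} of $\cO_x=\cO_{X,x}$ by the ideal cutting out $Z$ near $x$; it coincides with $\cO_x$ only when $Z$ is open in $X$ near $x$. (Take $X=\spec k[t]$, $Z$ the reduced origin, $x$ its unique point: $\cO_x=k[t]_{(t)}$ while $\cO_{Z,x}=k$.) Consequently neither of your factorization identities holds. The correct relations are
$$(i_Z^*\F)_x\;=\;\ip_x^*\F\overset{L}{\otimes}_{\cO_x}\cO_{Z,x},\qquad
(i_Z^!\F)_x\;=\;R\Hom_{\cO_x}(\cO_{Z,x},\ip_x^*\F)\;=\;R\Hom_{\cO_x}(\cO_{Z,x},\ip_x^!\F),$$
and these are genuinely different objects from $\ip_x^*\F$ and $\ip_x^!\F$: in the example above, $(i_Z^!\cO_X)_x=k[-1]$, whereas $\ip_x^!\cO_X=(k(t)/k[t]_{(t)})[-1]$ is the local cohomology of the DVR. (Note also that $\ip_x^!$ is a direct limit of $R\Hom_{\cO_x}(\cO_x/I^N,-)$ over all infinitesimal thickenings, not a single $R\Hom$ from $\cO_{\Zbar,x}$.)

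This is not merely a cosmetic defect: it breaks exactly the directions you call ``immediate''. Knowing $(i_{Z^0}^*\F)_x\in D^{\le n}$ controls $\ip_x^*\F\overset{L}{\otimes}_{\cO_x}\cO_{Z,x}$, not $\ip_x^*\F$ itself; to pass back you need the observation --- which is the actual content of the paper's proof --- that $-\otimes_{\cO_x}\cO_{Z,x}$ is right exact and, by Nakayama, kills no nonzero finitely generated $\cO_x$-module, so that the top nonvanishing cohomological degree is unchanged by the derived tensor product. Dually for part (b): $\Hom_{\cO_x}(\cO_{Z,x},-)$ is left exact and kills no nonzero torsion module (all cohomology modules of $\ip_x^!\F$ are torsion), so the bottom nonvanishing degree is unchanged. (Your ``only if'' conclusions happen to survive, since derived tensoring does not raise degrees and $R\Hom$ out of a module does not lower them, but the reasons you give for them are the same false identifications.) Once these two degree comparisons are inserted, the remainder of your argument --- only finitely many coherent cohomology sheaves are nonzero, and removing their supports in the offending degrees yields $Z^0$ --- is correct and agrees with how the paper concludes.
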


\begin{proof} Existence of an open subscheme $Z^0\subset Z$
as in \eqref{equiv:a} is equivalent to $$\ip_x^*i_Z^*(\F)\in D^{\leq n}(\cO_{Z,x}\md).$$
Indeed, if this  holds, 
we can let $Z^0$ be 
the complement in $Z$ to support of ${\cal H}^k (i_Z^*(\F))$, $k>n$;
the converse is obvious.

Let us rewrite $$\ip_x^*i_Z^*(\F)
=\ip_x^*(\F)\overset{L}{\otimes}_{\cO_x} \cO_{Z,x}.$$ Since the functor
of tensor product with $\cO_{Z,x}$ over $\cO_x$ is right exact, and kills no
finitely generated $\cO_x$-modules by the Nakayama Lemma, we see that 
the top cohomology of $\ip_x^*(\F)\overset{L}{\otimes}_{\cO_x} \cO_{Z,x}$
and of $\ip_x^*(\F)$ occur in the same degree. This proves \eqref{equiv:a}.

Similarly,  the second condition in \eqref{equiv:b} says that $$\ip_x^!
i_Z^!(\F)=\ip_x^*i_Z^!(\F)\in  D^{\geq n}(\cO_{Z,x}\md)$$
(the equality here is due to the fact $x$ is generic in $Z$).
We rewrite
$$\ip_x^!i_Z^!(\F)
=R\Hom_{\cO_x}( \cO_{Z,x}, \ip_x^!(\F)),$$ and see that 
 the lowest cohomology of  $\ip_x^!(\F)$ and of 
$R\Hom_{\cO_x}( \cO_{Z,x},\ip_x^!(\F))$ occur in the same degree. Indeed, the functor
$\Hom_{\cO_x}( \cO_{Z,x},\underline{\ \ })$ is left exact, and kills no 
torsion modules, while cohomology objects of $\ip_x^!(\F)$ are torsion $\cO_x$-modules.
\end{proof}

\begin{Lem}[cf. e.g.
\cite{H}, Theorem V.4.1]\label{lim}
Let $\ip:\bfZ \imbed X\tp$ be the embedding of a closed
subspace. 
For any $\F \in D^-_{coh}(X)$, $\G\in D^+_{qcoh}(X)$ we have
$$                                 
\Hom(\F ,\ip_*\ip^!(\G))=\varinjlim_Z \Hom(\F ,i_{Z*}i_Z^!(\G)),
$$                                 
where $Z$ runs over the set of closed 
subschemes of $X$ with the underlying
topological space $\bfZ$.
\end{Lem}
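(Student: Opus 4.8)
The plan is to reduce the statement to a purely local computation at the generic points of $\bfZ$, using the fact that $\F$ is coherent and bounded above while $\G$ is bounded below. First I would fix a closed subscheme structure $Z_0$ on $\bfZ$, so that every closed subscheme $Z$ with underlying space $\bfZ$ is a thickening of $Z_0$; the closed subschemes supported on $\bfZ$ form a directed system (ordered by inclusion of ideal sheaves, with any two having a common upper bound given by the product of ideals, and a common lower bound given by $Z_0$, i.e.\ the reduced structure, after possibly intersecting), so the inductive limit on the right-hand side makes sense. For each such $Z$ with embedding $i_Z:Z\imbed X$ we have the adjunction-type identity $\Hom(\F,i_{Z*}i_Z^!(\G))=\Hom(i_Z^*\F,i_Z^!\G)$, but it is cleaner to keep everything on $X$: note $i_{Z*}i_Z^!(\G)=R\uHom_{\cO_X}(\cO_Z,\G)$ pushed forward, and $\ip_*\ip^!(\G)$ is, by the Remark following the definition of $\ip_x^!$, the local cohomology object $R\underline\Gamma_\bfZ(\G)$. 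So I must prove
$$\Hom\bigl(\F,R\underline\Gamma_\bfZ(\G)\bigr)=\varinjlim_Z\Hom\bigl(\F,i_{Z*}i_Z^!\G\bigr).$$

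The key step is that $R\underline\Gamma_\bfZ(\G)=\varinjlim_Z i_{Z*}i_Z^!(\G)$ as an actual isomorphism in $D^+_{qcoh}(X)$: indeed $\underline\Gamma_\bfZ(-)=\varinjlim_n \uHom_{\cO_X}(\cO_X/\cI^n,-)$ for $\cI$ the ideal of $Z_0$, filtered colimits are exact, and on a Noetherian scheme they commute with the derived functor here, so $R\underline\Gamma_\bfZ(\G)=\varinjlim_n R\uHom_{\cO_X}(\cO_X/\cI^n,\G)=\varinjlim_n (i_{Z_n})_* i_{Z_n}^!\G$ where $Z_n$ is the $n$-th infinitesimal neighbourhood; and the $Z_n$ are cofinal in the system of all $Z$. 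Then I would commute $\Hom(\F,-)$ past the filtered colimit. This is the point where coherence and boundedness of $\F$ are essential: for $\F\in D^-_{coh}(X)$ on a Noetherian scheme, $\Hom_{D(X)}(\F,-)$ commutes with filtered colimits of objects uniformly bounded below — one reduces to $\F$ a single coherent sheaf in one degree (devissage on the finitely many cohomology sheaves, using that each is a finite sum of $\cO_X/\cJ$-type pieces locally, or just a spectral-sequence/truncation argument), and then to $\Hom$ and $\Ext^i$ out of a coherent sheaf, which commute with filtered colimits because $X$ is Noetherian and the sheaves $i_{Z_n*}i_{Z_n}^!\G$ all live in $D^{\geq N}$ for a fixed $N$ (as $\G\in D^+$ and $i_Z^!$ has cohomological amplitude bounded below). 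Here the finite-dimensionality of $\ip_\bfZ^!$ proved in the preceding lemma is exactly what controls the lower bound uniformly in $Z$.

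The main obstacle I anticipate is the interchange of $R\Hom$ with the filtered colimit in the \emph{unbounded-above} direction: $\F$ is only bounded above, and $R\Hom(\F,-)$ for such $\F$ does not in general commute with infinite direct sums or filtered colimits unless there is a uniform lower bound on the targets. So the crux is to establish that all the $i_{Z_n*}i_{Z_n}^!\G$ sit in $D^{\geq N}(\cO_X\text{-mod})$ for a single $N$ independent of $n$; granting this, a truncation argument replaces $\F$ by the bounded complex $\tau^{\geq N-1}\F$ without changing either Hom-group, and then the standard fact that $\Hom$ out of a \emph{bounded} complex of coherent sheaves on a Noetherian scheme commutes with filtered colimits of $\cO_X$-modules finishes the proof. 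I would isolate the uniform-bound claim as the one genuinely substantive point and handle the rest as routine homological bookkeeping, exactly parallel to the proof of \cite[Theorem~V.4.1]{H} cited in the statement.
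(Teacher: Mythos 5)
Your argument is correct and its skeleton is the same as the paper's: write $\ip_*\ip^!(\G)$ as the filtered union (at the chain level, on an injective resolution $I_\G$ of $\G$) of the $i_{Z*}i_Z^!(I_\G)$, then commute $\Hom(\F,-)$ past that union using coherence of $\F$. Where you differ is in the second step. The paper resolves $\F$ by a bounded-above complex $P_\F$ whose terms are finite direct sums of sheaves $(j_U)_!\cO_U$ for affine open $U$; since $\Hom((j_U)_!\cO_U,-)=\Gamma(U,-)$ is exact on $\qcoh(X)$ and commutes with filtered unions on a Noetherian scheme, both sides of the lemma are computed by the single complex $\Hom^\bullet(P_\F,-)$ and the colimit passes through termwise. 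You instead truncate $\F$ to a bounded complex (legitimate, as you say, because the targets are uniformly bounded below) and then reduce by d\'evissage to the fact that $\Ext^i$ out of a coherent sheaf commutes with filtered colimits of quasi-coherent sheaves. That fact is true, but its proof is essentially the $P_\F$ construction (or the local-to-global $\Ext$ spectral sequence), so you relocate rather than avoid the work; the paper's route has the advantage of being self-contained and of handling the unbounded-above $\F$ directly, without the truncation step. One small correction: the uniform lower bound on $i_{Z*}i_Z^!(\G)$ is automatic, since $i_Z^!=R\uHom_{\cO_X}(\cO_Z,-)$ is the right derived functor of a left-exact functor and hence is bounded below by the bound of $\G$ independently of $Z$; the finite-cohomological-dimension lemma you invoke controls the \emph{upper} bound of $\ip_\bfZ^!$ and is not what is needed here.
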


\begin{proof}
Let us represent $\G$ by a bounded below complex $I_\G$ of injective quasi-coherent sheaves.
Then $i_{Z*}i_Z^!(I_\G)$ represents $i_{Z*}i_Z^!(\G)$, and $\ip_*\ip^!(I_\G)$ represents 
$\ip_*\ip^!(\G)$. 

However, $i_{Z*}i_Z^!(I_\G)$ is not a complex of injective modules, so it is not suitable for
computing $\Hom(\F,\ip_*\ip^!(\G))$. We therefore introduce a resolution of $\F$. Note that for
an open affine embedding $j_U:U\hookrightarrow X$, the extension by zero $(j_U)_!\cO_U$ is
`projective with respect to $\qcoh(X)$' in the sense that the functor $\Hom((j_U)_!\cO_U,\underline{\ \ })$ is exact on $\qcoh(X)$. Let us represent $\F$ by a bounded
above complex $P_\F$ whose terms are finite direct sums of sheaves
of the form $(j_U)_!\cO_U$ for open affine embeddings $j_U:U\hookrightarrow X$. The construction of such complex $P_\F$ is sketched in Remark~\ref{PF}.

Now $R\Hom(\F, i_{Z*}i_Z^!(\G))$ can be computed
by the complex $$\Hom^\bullet(P_\F, i_{Z*}i_Z^!(I_\G)).$$
On the other hand, 
$R\Hom(\F ,\ip_*\ip^!(\G))$ is computed by the complex 
$$\Hom^\bullet(P_\F, \ip_* \ip^!(I_\G)).$$ 
Finally, 
$$\ip_*\ip^!(I_\G)=\cupl_Z i_{Z*}i_Z^!(I_\G),$$ and therefore 
$$\Hom^\bullet
(P_\F, \ip_*\ip^!(I_\G))=\cupl_Z \Hom^\bullet (P_\F ,i_{Z*}i_Z^!(I_\G)).$$
This implies the lemma. 
\end{proof}

\begin{Rem}\label{PF} Let us sketch a construction of $P_\F$. By \cite[Corollary~5.5]{BN},
$\F$ can be represented by a bounded above complex of quasi-coherent $\cO_X$-modules $\F^\bu$ 
(which can be chosen to be coherent by Corollary~\ref{odinhren}). 
We can now proceed in one of the two ways.

On the one hand, for a Noetherian scheme $X$, the sheaf $\cO_X$ is Noetherian in the category of sheaves of $\cO_X$-modules 
(and not just quasi-coherent $\cO_X$-modules); see the proof of \cite[Theorem~II.7.8]{H}. 
It follows that $j_!\cO_U$ is Noetherian for any open embedding $j:U\imbed X$, 
and that a sheaf of $\cO_X$-modules is Noetherian if and only if it is a quotient of a finite direct sum of sheaves of the form $j_!\cO_U$; here we can restrict
ourselves to affine open embeddings $j:U\imbed X$. Also, any coherent $\cO_X$-module is 
Noetherian. Finally, for any bounded above complex of $\cO_X$-modules with Noetherian cohomology
$\F^\bu$, we can construct a quasi-isomorphism $P_\F^\bu\to\F^\bu$, where $P_\F^\bu$ is of the
kind we consider; the argument is standard and similar to that used in Corollary~\ref{odinhren}.

On the other hand, here is a more explicit construction. Choose an affine hypercover $j_\bu:X_\bu\to X$ in the Zariski topology. If $X$
is semi-separated, such $X_\bu$ can be constructed by taking a finite affine open cover
$X=\bigcup U_i$, and setting
$$X_n=\coprod_{i_0<\dots<i_n} U_{i_0}\cap\dots\cap U_{i_n}.$$
(If $X$ is not semi-separated, intersections $U_{i_0}\cap U_{i_1}$ may fail to be affine, so we
must take their affine covers, and proceed recursively.) We then have, for every sheaf $\G$ on
$X\tp$, the \emph{co-\v Cech resolution} 
$$\dots\to (j_2)_!j_2^*\G\to (j_1)_!j_1^*\G\to(j_0)_!j_0^*\G$$
of $\G$. 

Choose a resolution $\F_n^\bu$ of 
$j_n^*\F$ by free $\cO_{X_n}$-modules of finite 
rank for each $n$. 
Such choices can be done in a compatible way, so that $\{\F_n^\bu\}_n$ is a 
complex of quasi-coherent sheaves on the simplicial scheme $X_\bu$. Now set $P_\F$ to
be the complex associated with a double complex
$$\dots\to (j_2)_!\F_2^\bu\to (j_1)_!\F_1^\bu\to(j_0)_!\F_0^\bu.$$ 
\end{Rem}

\section{Perverse coherent sheaves}\label{s1}
\subsection{Perverse $t$-structure on schemes}
Let us first consider the case of schemes. Suppose $X$ is a Noetherian scheme that admits a dualizing complex.

Fix a dualizing complex 
$$\DC_X\in D^b_{coh}(X).$$ The choice determines
the \emph{codimension function} $d$ on points $x\in X\tp$ such that
$\ip_x^!(\DC_X)$ is concentrated in 
homological degree $d(x)$  (see \cite{H}, \S V.7). Clearly, $d(x)$ is bounded, because $\DC_X$ has finite injective dimension. We set $\dim(x)=-d(x)$. 

\begin{Ex} Suppose $X$ is of finite type over a field. Then we can choose $\DC_X$ so that 
$\dim(x)$ equals to the dimension of the closure of $x$.
\label{finitetypescheme}
\end{Ex}

Let  $p$ (the \emph{perversity}) be an integer-valued function $p:X\tp\to\Zet$. The \emph{dual perversity} (for given $\DC_X$) is given by $\pbar(x )=-\dim(x)-p(x)$.

\begin{Def} Define  $\Dpm\subset D^-_{coh}(X)$,  $\Dpp \subset D^+_{coh}(X)$
as follows:

 $\F \in \Dpp$ if 
for any $x \in \Xtop$  we have $\ip_x ^!(\F )\in D^{\geq p(x )}(\cO _x\md)$.

$\F \in  \Dpm$ if 
for any $x \in \Xtop$  we have  $\ip_x ^*(\F )\in D^{\leq p(x )}(\cO _x\md)$.
\end{Def}

\begin{Lem}\label{predvar} 
\begin{enumerate}
\item\label{predvar:a} $\D(\Dpm)=D^{\pbar,\geq 0}(X)$, where $\D=R\uHom(\underline{\ \ },\DC_X)$ is the duality functor.

\item\label{predvar:b} Let  $i_Z:Z\imbed X$ be  a locally closed 
subscheme. Define the \emph{induced perversity} on $Z$ by $p_Z=p\circ i_Z:Z\tp\to \Zet$.
Then 
$$i_Z^*(\Dpm)\subset D^{p_Z,\leq 0}(Z)\qquad\text{and}\qquad i_Z^!(\Dpp)\subset D^{p_Z,\ge 0}(Z).$$

\item\label{predvar:c} In the situation of \eqref{predvar:b} assume that $Z$ is closed. Then 
$$i_{Z*}(D^{p_Z,\leq 0}(Z))\subset\Dpm\qquad\text{and}\qquad i_{Z*}(D^{p_Z,\geq 0}(Z))\subset\Dpp.$$ 
\end{enumerate}
\end{Lem}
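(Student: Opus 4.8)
The plan is to reduce all three parts to assertions about the stalk complexes $\ip_x^*(\F)$ and $\ip_x^!(\F)$ over the local rings $\cO_x$, using only exactness properties together with Grothendieck local duality. For part~\eqref{predvar:a}: since $\F\in D^-_{coh}(X)$ is locally a bounded‑above complex of finite free modules, $R\uHom$ commutes with passage to stalks, so $\ip_x^*(\D\F)=R\Hom_{\cO_x}(\ip_x^*\F,D_x)$, where $D_x:=\ip_x^*\DC_X$ is a dualizing complex for $\cO_x$. By the very definition of the codimension function $d$, $D_x$ is normalized so that $R\Gamma_{\mathfrak m_x}(D_x)=\ip_x^!(\DC_X)$ is concentrated in degree $d(x)$, i.e.\ $R\Gamma_{\mathfrak m_x}(D_x)\cong E_x[-d(x)]$ for an injective hull $E_x$ of $k(x)$. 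Applying $\ip_x^!$ (which on stalks is $R\Gamma_{\mathfrak m_x}$), invoking local duality, and using biduality $R\Hom_{\cO_x}(R\Hom_{\cO_x}(-,D_x),D_x)=\mathrm{id}$, I expect to get
$$\ip_x^!(\D\F)\;\cong\;\Hom_{\cO_x}(\ip_x^*\F,E_x)[-d(x)].$$
Since $\Hom_{\cO_x}(-,E_x)$ is exact, contravariant, reverses cohomological degree, and is faithful on finitely generated modules (a nonzero f.g.\ module surjects onto $k(x)$, whose Matlis dual is nonzero), one sees that $\ip_x^*\F\in D^{\le p(x)}(\cO_x\md)$ holds if and only if $\ip_x^!(\D\F)\in D^{\ge -p(x)+d(x)}(\cO_x\md)=D^{\ge\pbar(x)}(\cO_x\md)$, using $\pbar(x)=d(x)-p(x)$. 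Running this over all $x$ gives $\F\in\Dpm\iff\D\F\in D^{\pbar,\ge 0}(X)$; surjectivity of $\D:\Dpm\to D^{\pbar,\ge 0}(X)$ then follows by applying this equivalence to $\D\G$ and using $\D^2=\mathrm{id}$, valid on $D^{\pm}_{coh}(X)$ because $\DC_X$ has finite injective dimension.

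For part~\eqref{predvar:b}, I would write $i_Z$ as a closed embedding followed by an open one and treat each. Fix $z\in Z\tp$, put $x=i_Z(z)$ (so $p_Z(z)=p(x)$), and note $\cO_{Z,z}$ is $\cO_x$ in the open case and a quotient $\cO_x/I$ in the closed case. Taking stalks at $z$ one has $\ip_z^*(i_Z^*\F)=\ip_x^*(\F)\overset{L}{\otimes}_{\cO_x}\cO_{Z,z}$ and, for $Z$ closed, $(i_Z^!\F)_z=R\Hom_{\cO_x}(\cO_{Z,z},\ip_x^*\F)$ (base change of $R\uHom$ along a closed embedding). Now $-\overset{L}{\otimes}_{\cO_x}\cO_{Z,z}$ is right exact, hence does not raise the top cohomological degree; thus $\F\in\Dpm$ gives $\ip_z^*(i_Z^*\F)\in D^{\le p(x)}(\cO_{Z,z}\md)$, proving $i_Z^*(\Dpm)\subset D^{p_Z,\le 0}(Z)$. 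Dually, $R\Hom_{\cO_x}(\cO_{Z,z},-)$ and $R\Gamma_{\mathfrak m_z}$ are derived functors of left exact functors, hence do not lower the bottom degree; so $\F\in\Dpp$ gives $\ip_z^!(i_Z^!\F)=R\Gamma_{\mathfrak m_z}((i_Z^!\F)_z)\in D^{\ge p(x)}(\cO_{Z,z}\md)$, proving $i_Z^!(\Dpp)\subset D^{p_Z,\ge 0}(Z)$. (Alternatively the $!$‑statement follows from the $*$‑statement and part~\eqref{predvar:a} via $i_Z^!\circ\D_X=\D_Z\circ i_Z^*$ with $\DC_Z=i_Z^!\DC_X$, though matching the induced codimension functions then takes a line.)

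For part~\eqref{predvar:c}, $Z$ is closed, so $i_{Z*}$ is exact and $i_{Z*}\G$ has zero stalk at every $x\notin Z$, whence $\ip_x^*(i_{Z*}\G)=\ip_x^!(i_{Z*}\G)=0$ there. At $x=i_Z(z)\in Z$, the stalk of $i_{Z*}\G$ is the stalk $\G_z$ viewed through $\cO_x\sur\cO_{Z,z}$, so $\ip_x^*(i_{Z*}\G)\cong\ip_z^*(\G)$ and $\ip_x^!(i_{Z*}\G)=R\Gamma_{\mathfrak m_x}(\G_z)\cong R\Gamma_{\mathfrak m_z}(\G_z)=\ip_z^!(\G)$, the middle isomorphism because local cohomology of an $\cO_{Z,z}$‑module is unchanged when $\mathfrak m_x$ is replaced by the equiradical ideal $\mathfrak m_z$. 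Since restriction of scalars preserves cohomological amplitude and $p_Z(z)=p(x)$, one concludes that $\G\in D^{p_Z,\le 0}(Z)$ forces $i_{Z*}\G\in\Dpm$ and $\G\in D^{p_Z,\ge 0}(Z)$ forces $i_{Z*}\G\in\Dpp$.

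The only substantial point will be the local‑duality isomorphism in part~\eqref{predvar:a}: the care required is to confirm that the normalization of $\DC_X$ recorded by the codimension function $d$ yields exactly the shift by $d(x)$, and that $\D$ commutes with restriction to stalks. Once that is pinned down, parts~\eqref{predvar:b} and \eqref{predvar:c} are formal consequences of the exactness of $\otimes$, $\uHom$, $\Gamma_{\mathfrak m}$, and $i_{Z*}$.
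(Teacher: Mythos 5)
Your argument is correct and follows essentially the same route as the paper: part~\eqref{predvar:a} is the local-duality identity $\ip_x^!(\D\F)\cong\Hom_{\cO_x}(\ip_x^*\F,E_x)[\,\text{shift}\,]$ together with exactness and faithfulness of Matlis duality on finitely generated modules, and parts~\eqref{predvar:b}, \eqref{predvar:c} reduce to the same stalk computations (right exactness of $-\overset{L}{\otimes}_{\cO_x}\cO_{Z,z}$, left exactness of $R\Hom_{\cO_x}(\cO_{Z,z},-)$) that the paper packages as Lemma~\ref{equiv}. The one step to tighten is the $!$-half of \eqref{predvar:b}: the hypothesis $\F\in\Dpp$ constrains $\ip_x^!\F=R\Gamma_{\mathfrak m_x}(\ip_x^*\F)$, not $\ip_x^*\F$, so before invoking left exactness you must rewrite $R\Gamma_{\mathfrak m_z}\bigl(R\Hom_{\cO_x}(\cO_{Z,z},\ip_x^*\F)\bigr)$ as $R\Hom_{\cO_x}(\cO_{Z,z},\ip_x^!\F)$ (exactly as in the paper's proof of Lemma~\ref{equiv}), or else use the duality alternative you mention.
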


\begin{proof} \eqref{predvar:a} One knows from \cite{H}, Section~V.6 that 
$$\ip_x^!(\D(\F))
= \Hom_{\cO_x}(\ip_x^*(\F), I_{\cO_x})[-\dim(x)]\quad (\F\in D^b_{coh}(X),x\in\Xtop),$$ 
where $I_{\cO_x}$ is the injective hull of the residue field of $\cO_x$. 
Since $\Hom_{\cO_x}
(\underline{\ \ }, I_{\cO_x})$ is exact and kills no finitely generated 
$\cO_x$-module, \eqref{predvar:a} follows.

\eqref{predvar:b} follows from Lemma \ref{equiv}; in view of this lemma
if $\F\in D^{p,\leq 0}$, then for any $x \in \Ztop\subset \Xtop$
there exists a subscheme $Z'\subset Z$ with generic point $x $ such 
that $i_{Z'}^*(\F)=i_{Z'}^*(i_Z^*(\F))\in D^{\leq p(x )}(Z)$. This
implies $i_Z^*(\F)\in D^{p_Z,\leq 0}(Z)$. The argument for $i_Z^!(\F)$ is similar.

\eqref{predvar:c} is obvious. 
% since  $i_x ^*(i_{Z_*}(\F ))=i_x ^*(\F )$, 
 % $i_x ^!(i_{Z*}(\F ))=i_x ^!(\F )$ for 
%$x \in \Ztop\subset \Xtop$, and $i_x ^*(i_{Z_*}(\F ))=0=i_x ^!(i_{Z*}(\F ))$
%for $x \in \Xtop -\Ztop $. \epf
\end{proof}

\begin{Rem*} Set $\cP_{X,p}=\Dpp\cap\Dpm$. It follows from Theorem~\ref{t_str} that $\cP_{X,p}$ is
an abelian category (the category of perverse coherent sheaves). Lemma~\ref{predvar}\eqref{predvar:a} implies that the duality $\D$ yields an anti-equivalence between $\cP_{X,p}$ and
$\cP_{X,\pbar}$.

In particular, suppose $p=0$. Then $\cP_{X,p}=\coh(X)$, while $\cP_{X,\pbar}$ is the category
of Cohen-Macaulay complexes. See \cite[Section~6]{YZ} for the definition of Cohen-Macaulay
complexes; the claim that $\D(\cP_{X,\pbar})=\coh(X)$ is contained in \cite[Theorem~6.2]{YZ}.
\end{Rem*}

It is clear that the categories $\Dpm$, $\Dpp$ are local with respect to flat Gorenstein 
coverings. 

\begin{Lem}\label{smoothloc} Suppose $\pi:Y\to X$ is a faithfully flat Gorenstein morphism of finite type. Consider on $Y$ the perversity
$\pi^*p=p\circ \pi$. Then for $\F\in D_{coh}(X)$,
\begin{align*}
\F\in D^{p,\ge 0}(X)&\text{ if and only if }\pi^*\F\in D^{\pi^*p,\ge 0}(Y)\\
\F\in D^{p,\le 0}(X)&\text{ if and only if }\pi^*\F\in D^{\pi^*p,\le 0}(Y).
\end{align*}
\qed
\end{Lem}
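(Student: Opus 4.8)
The plan is to reduce everything to the pointwise criteria of Lemma~\ref{equiv}, exploiting the fact that $\pi$, being faithfully flat of finite type, induces a surjective map $\pi\tp:Y\tp\to X\tp$, and that for a point $y\in Y\tp$ lying over $x\in X\tp$ the local ring map $\cO_x\to\cO_y$ is flat and local. First I would fix $\F\in D_{coh}(X)$. For the $!$-statement, I would compute, for $y\in Y\tp$ over $x$, the relation between $\ip_y^!(\pi^*\F)$ and $\ip_x^!(\F)$. Using \eqref{pi!}, $\pi^!\F=\pi^*\F\otimes\omega_{Y/X}$ differs from $\pi^*\F$ only by a shift-free line bundle twist (since $\pi$ is Gorenstein, $\omega_{Y/X}$ is invertible and, crucially, concentrated in cohomological degree zero after our normalization of $\omega_{Y/X}$ — here one must be slightly careful, as the Remark after \eqref{pi!} notes $\pi^!\cO_X$ may sit in a nonzero degree; but the degree is locally constant, so on each connected component it only contributes an overall shift, and one can either assume $\pi$ has relative dimension functions making this vanish or absorb the shift — the cleanest route is to note that $\omega_{Y/X}$ placed in its natural degree is \emph{invertible}, hence $(-)\otimes\omega_{Y/X}$ is an equivalence that preserves the property ``$\ip_y^!$ concentrated in degrees $\ge n$'' because it is, Zariski-locally on $Y$, just the identity).

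Next, the key local computation: for $y$ over $x$ with $x$ generic in its closure, flatness of $\cO_x\to\cO_y$ gives $\ip_y^!(\pi^!\F)\simeq \ip_x^!(\F)\overset{L}{\otimes}_{\cO_x}\cO_y$ up to the dualizing-sheaf twist, since the local-cohomology functor $\Gamma_y$ commutes with flat base change on the local rings; alternatively, one invokes Lemma~\ref{equiv} directly. Indeed, choosing $Z=\overline{\{x\}}$ (reduced) and $W=\overline{\{y\}}$, the morphism $W\to Z$ is again flat of finite type, and by Lemma~\ref{equiv}\eqref{equiv:b}, $\ip_x^!(\F)\in D^{\ge n}(\cO_x\md)$ iff $i_{Z^0}^!(\F)\in D^{\ge n}_{coh}(Z^0)$ for some open $Z^0\ni x$; pulling this back along the flat morphism $W^0:=W\times_Z Z^0\to Z^0$ and using that flat pullback is $t$-exact (for the standard $t$-structure) on $D_{coh}$, together with compatibility of $i^!$ with flat base change, one gets $\ip_y^!(\pi^*\F)\in D^{\ge n}(\cO_y\md)$. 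For the converse, one uses surjectivity of $\pi\tp$: given $x$, pick any $y$ over it; faithful flatness of $\cO_x\to\cO_y$ means $(-)\overset{L}{\otimes}_{\cO_x}\cO_y$ kills no nonzero finitely generated module and detects the lowest nonvanishing cohomological degree (Nakayama/faithful flatness), exactly as in the proof of Lemma~\ref{equiv}; hence $\ip_y^!(\pi^*\F)\in D^{\ge n}(\cO_y\md)$ forces $\ip_x^!(\F)\in D^{\ge n}(\cO_x\md)$.

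The $*$-statement is entirely parallel and in fact easier, since $\ip_y^*(\pi^*\F)=\ip_x^*(\F)\overset{L}{\otimes}_{\cO_x}\cO_y$ with no dualizing twist at all; the same faithful-flatness argument (tensoring with a faithfully flat local algebra preserves and detects the top nonvanishing degree) gives both directions. One could also simply deduce the $*$-case from the $!$-case via Lemma~\ref{predvar}\eqref{predvar:a} and the fact that $\pi^*$ intertwines the duality functors up to the $\omega_{Y/X}$-twist (Lemma~\ref{locdual}), since $\pi^*\DC_X$ is dualizing on $Y$ and the associated dimension function on $Y\tp$ is $x\mapsto \dim(x)$ pulled back appropriately — but the direct argument is shorter.

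The main obstacle I anticipate is purely bookkeeping rather than conceptual: keeping straight the cohomological shift hidden in $\omega_{Y/X}$ and confirming that the relevant base-change isomorphisms $\ip_y^!\circ\pi^*\simeq(\text{flat pullback})\circ\ip_x^!$ hold with the right normalization. Once one grants, as the preceding subsection sets up, that $\omega_{Y/X}$ is an invertible sheaf (so twisting by it is a $t$-exact auto-equivalence of $D_{coh}$ locally) and that $i^!$ satisfies flat base change, the statement follows mechanically from Lemma~\ref{equiv} applied on both $X$ and $Y$ together with the surjectivity and faithful flatness of $\pi$ on local rings. This is presumably why the authors mark the lemma with \qed: every ingredient has been assembled above it.
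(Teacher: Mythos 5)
Your $*$-half is correct and complete: $\ip_y^*(\pi^*\F)=\ip_x^*(\F)\overset{L}{\otimes}_{\cO_x}\cO_y$, the local homomorphism $\cO_x\to\cO_y$ is faithfully flat, and surjectivity of $\pi\tp$ does the rest. The $!$-half has a genuine gap. The asserted identity $\ip_y^!(\pi^*\F)\simeq\ip_x^!(\F)\overset{L}{\otimes}_{\cO_x}\cO_y$ (with or without an $\omega_{Y/X}$-twist) is false except when $y$ is a generic point of $\pi^{-1}(\overline{\{x\}})$: already for $X=\spec k$, $Y=\mathbb{A}^1_k$, $\F=\cO_X$ and $y$ a closed point, the left-hand side is $H^1_{\mathfrak{m}_y}(\cO_y)$ placed in degree $1$, while the right-hand side sits in degree $0$. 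The functor $\ip_y^!$ involves local cohomology along $\overline{\{y\}}$, which shifts degrees by an amount depending on $\codim(\overline{\{y\}}\subset\pi^{-1}(\overline{\{x\}}))$; this varying shift is precisely why the Remark following the Lemma introduces the second perversity $\pi^!p$. Your fallback route breaks at the same place: $\overline{\{y\}}\to\overline{\{x\}}$ is in general \emph{not} flat (take $\pi:\mathbb{A}^3\to\mathbb{A}^2_{u,v}$ and $y$ the generic point of $V(uw-v)$, whose closure has a one-dimensional fibre over the origin), so there is no ``flat morphism $W^0\to Z^0$'' to pull back along. And in the converse direction ``pick any $y$ over $x$'' does not work: in the $\mathbb{A}^1$ example $\ip_y^!(\pi^*\cO)\in D^{\geq 1}$ while $\ip_x^!(\cO)\notin D^{\geq 1}$, so a non-generic $y$ detects nothing. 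Separately, the long discussion of $\omega_{Y/X}$ is a red herring: $\ip_y^!$ is a purely topological functor that never meets the relative dualizing sheaf, and tensoring with $\omega_{Y/X}$ is locally a shift by $\dim(Y/X)$, not the identity; Gorensteinness is in fact not needed for this lemma, only flatness.

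The repair stays within your toolkit. Set $Z=\overline{\{x\}}$ and $V=\pi^{-1}(Z)=Z\times_XY$, so that $\pi_V:V\to Z$ is faithfully flat and flat base change holds for the \emph{full} preimage: $i_V^!(\pi^*\F)\simeq\pi_V^*\,i_Z^!(\F)$ (after shrinking $Z$ to the $Z^0$ of Lemma~\ref{equiv}). For ``only if'', a general $y$ over $x$ has $\overline{\{y\}}$ closed in $V$, and $i^!$ of that further closed embedding is the derived functor of a left exact functor, hence carries $D^{\geq n}$ to $D^{\geq n}$; Lemma~\ref{equiv}\eqref{equiv:b} then gives $\ip_y^!(\pi^*\F)\in D^{\geq p(x)}(\cO_y\md)$. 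For ``if'', choose $y$ to be a generic point of $V$ lying over $x$ (it exists by faithful flatness); for such $y$, Lemma~\ref{equiv}\eqref{equiv:b} reduces the two conditions to $\ip_x^*i_Z^!(\F)$ and $\ip_y^*\pi_V^*i_Z^!(\F)=\ip_x^*i_Z^!(\F)\overset{L}{\otimes}_{\cO_{Z,x}}\cO_{V,y}$, and faithful flatness of the local homomorphism detects the bottom degree. (The paper itself gives no proof, treating locality of $\Dpm$ and $\Dpp$ under flat Gorenstein covers as clear.)
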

%!! Not hard to prove. Maybe prove explicitly using that lemma.

\begin{Rem*} 
One could consider on $Y$ the perversity $\pi^!p$ given by $$\pi^!p(y)=p(\pi(y))+
\codim(\overline{\{y\}}\subset\overline{\pi^{-1}(\pi(y))})\quad(y\in Y\tp).$$
Then $$\pi^!\overline p=\overline{\pi^* p},$$
assuming the dualizing complex on $Y$ is the pull-back of the dualizing complex on $X$. It is easy to
see that Lemma~\ref{smoothloc} remains true for the perversity $\pi^!p$; this statement is dual to Lemma~\ref{smoothloc} in the sense of the Grothendieck-Serre duality.
\end{Rem*}

\begin{Prop}\label{Hom0sch} For $\F\in \Dpm$, $\G\in D^{p,>0}(X)$ we have
  $\Hom(\F , \G)=0$. 
\end{Prop}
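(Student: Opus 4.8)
The plan is to reduce the vanishing of $\Hom(\F,\G)$ to a computation of local $\Ext$ groups at points of $X\tp$, exploiting the defining conditions $\ip_x^*\F\in D^{\le p(x)}(\cO_x\md)$ and $\ip_x^!\G\in D^{\ge p(x)+1}(\cO_x\md)$. The natural framework is a devissage on the support: write $X$ as a finite stratification into locally closed subschemes on which $\G$ has locally free cohomology in a single range, and induct. More precisely, choose a closed point or a minimal-dimension stratum $Z$ of the support and use the distinguished triangle $i_{Z*}i_Z^!\G\to\G\to j_*j^*\G\to[1]$ for the complementary open $j:U\imbed X$. Applying $\Hom(\F,-)$ reduces the claim to two sub-statements: $\Hom(\F,i_{Z*}i_Z^!\G)=0$ and $\Hom(\F,j_*j^*\G)=0$ (up to shift), the latter handled by Noetherian induction since $j^*\G\in D^{p_U,>0}(U)$ by Lemma~\ref{predvar}\eqref{predvar:b} and $j^*\F\in D^{p_U,\le 0}(U)$, while $\Hom(\F,j_*j^*\G)=\Hom(j^*\F,j^*\G)$.

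For the local term, the key input is Lemma~\ref{lim}, which expresses $\Hom(\F,\ip_*\ip^!\G)$ as a direct limit over closed subschemes $Z$ with space $\bfZ=\overline{\{x\}}$ of $\Hom(\F,i_{Z*}i_Z^!\G)$. By adjunction $\Hom(\F,i_{Z*}i_Z^!\G)=\Hom(i_Z^*\F,i_Z^!\G)$, and one then localizes at the generic point $x$ of $Z$: since $\ip_x^*\F$ lives in degrees $\le p(x)$ and $\ip_x^!\G$ in degrees $\ge p(x)+1$, the $\Hom$ in the derived category of $\cO_x$-modules vanishes for degree reasons (a morphism from a complex concentrated in degrees $\le n$ to one concentrated in degrees $\ge n+1$ is zero in the derived category). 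To pass from the local vanishing at $x$ to the vanishing of $\Hom(i_Z^*\F,i_Z^!\G)$ itself, I would shrink $Z$ to a smaller closed subscheme on which the relevant cohomology sheaves are controlled — using Lemma~\ref{equiv} to find an open $Z^0\ni x$ with $i_{Z^0}^*\F\in D^{\le p(x)}_{coh}(Z^0)$ and $i_{Z^0}^!\G\in D^{\ge p(x)+1}_{coh}(Z^0)$, then taking a closed subscheme structure supported there — so that in the direct limit the contributions die. The direct limit formulation is exactly what makes this work: we need only kill each $\Hom$ group after possibly further shrinking.

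The main obstacle I anticipate is the bookkeeping in the devissage: ensuring that the inductive hypothesis applies cleanly to $j^*\G$ on $U$ (which has strictly smaller support, or at least fewer strata, so Noetherian induction on the support of $\G$ terminates), and carefully matching the cohomological amplitude conditions under $i_Z^*$ and $i_Z^!$ when $Z$ is locally closed rather than closed — this is precisely where Lemma~\ref{predvar}\eqref{predvar:b},\eqref{predvar:c} and Lemma~\ref{equiv} must be combined. One subtlety is that $\F\in D^-_{coh}$ need not be bounded below, so when invoking $j_*$ or $\ip_*\ip^!$ one must make sure the $\Hom$ computations are legitimate; this is handled by the projective resolution $P_\F$ of Remark~\ref{PF}, which is exactly the device used in the proof of Lemma~\ref{lim}, so I would lean on that machinery rather than reprove it. Modulo these reductions, the heart of the argument is the trivial degree-count vanishing of $\Hom$ between complexes in complementary amplitudes over a local ring, and everything else is assembling the global statement from this local fact via Lemma~\ref{lim} and induction.
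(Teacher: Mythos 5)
Your proposal assembles the right ingredients (the triangle $\ip_*\ip^!\G\to\G\to j_*j^*\G$, Lemma~\ref{lim}, adjunction, Lemma~\ref{equiv}, a degree count over $\cO_x$, Noetherian induction), but it attaches them to the wrong pieces of the decomposition, and both halves break as a result. First, you propose to take $Z$ to be a closed point or a minimal stratum and to handle the \emph{open} complement $U=X-Z$ ``by Noetherian induction.'' Noetherian induction runs over proper \emph{closed} subschemes of $X$; the open subscheme obtained by deleting a closed point is not smaller than $X$ in that ordering, and deleting points or strata one at a time does not terminate. (An induction on the number of strata of an adapted stratification would also run into the problem that on a positive-dimensional closed stratum the perversity $p$ need not be constant --- the Proposition assumes no monotonicity --- so the degree count at the stratum's generic point does not control the whole stratum.)

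Second, your treatment of the closed term misreads Lemma~\ref{lim}: the direct limit there is over closed subscheme structures (nilpotent thickenings) on a \emph{fixed} underlying closed set $\bfZ$, so you cannot ``shrink $Z$ to a smaller closed subscheme \dots so that in the direct limit the contributions die''; shrinking the underlying set changes which $\Hom$ group you are computing. For each thickening $Z$ you genuinely need $\Hom(i_Z^*\F,i_Z^!\G)=0$ on all of $Z$, not just near its generic point. The paper's proof fixes both problems by swapping the roles of the two pieces: by Lemma~\ref{equiv} one may choose $U$ to be a small open neighborhood of a generic point $x$ of $X$ on which $j^*\F\in D^{\le p(x)}$ and $j^*\G\in D^{>p(x)}$, so $\Hom(j^*\F,j^*\G)=0$ by the trivial degree count; the closed complement $X\tp-U\tp$ is then a proper closed subset, Lemma~\ref{predvar}\eqref{predvar:b} puts $i_Z^*\F$ and $i_Z^!\G$ in $D^{p_Z,\le 0}(Z)$ and $D^{p_Z,>0}(Z)$ for every closed subscheme structure $Z$ on it, and the Noetherian induction hypothesis (the Proposition for $(Z,p_Z)$) kills each term of the limit in Lemma~\ref{lim}. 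You should restructure your argument along these lines.
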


\begin{proof} Fix $\F$ and $\G$.
We proceed by Noetherian induction in $X$; thus we can assume
that the statement with $(X,p)$ replaced by $(Z,p_Z)$ for all 
closed subschemes $Z\subsetneq X$ is known.

Let $x $ be a generic point of  $X$.
By Lemma~\ref{equiv}, there exists an open subscheme $j:U\imbed X$ containing $x$
 such that $j^*(\F )\in D^{\leq
p(x )}_{coh}(U)$ and $j^*(\G)=j^!(\G)\in D^{>p(x )}_{coh}(U)$.
 Thus, of course, $\Hom(j^*(\F ),
j^*(\G))=0$. 

Let $\ip$ denote the closed embedding of $X\tp-U\tp$ into $X\tp$. 
It induces a distinguished triangle 
$$\ip_*\ip^! \G \to \G \to j_*j^*(\G)\to \ip_*\ip^! \G[1]$$
in $D^b_{qcoh}(X)$. 
By Lemma~\ref{lim},  
$$\Hom (\F ,\ip_*\ip^! \G)=
\varinjlim_Z \Hom(\F ,i_{Z*}i_Z^!(\G))=\varinjlim_Z
\Hom(i^*_Z(\F ),i_Z^!(\G)),$$
where $Z$ runs over closed subschemes of $X$ whose underlying set is $X\tp-U\tp$.
Notice finally that for any such $Z$,
$$i^*_Z(\F ) \in D^{p_Z, \leq 0}(Z)\qquad\text{and}\qquad i_Z^!(\G) \in  D^{p_Z, > 0}(Z)$$ 
by Lemma~\ref{predvar}\eqref{predvar:b}, so  
$$\Hom(i^*_Z(\F ),i_Z^!(\G))=0$$ by the induction hypotheses.
This implies the desired equality $\Hom(\F ,\G)=0$, since
$\Hom(\F ,j_*j^*(\G))=\Hom(j^*(\F ),j^*(\G))=0$. 
\end{proof}

If perversity $p$ is both monotone and comonotone (Definition~\ref{mon}), the categories $\Dpp$ and $\Dpm$ yield a $t$-structure on $D^b_{coh}(X)$.
To avoid repeating the argument, we prove this for perversities on stacks in Theorem~\ref{t_str}.

\subsection{Perverse $t$-structure on stacks}
Now let $\sX$ be a Noetherian semi-separated stack that admits
a dualizing complex. Fix a dualizing complex $\DC_\sX\in D^b_{coh}(\sX)$.
Recall that $\sX\tp$ is set of points of $\sX$ equipped with the Zariski topology. 
By definition, a \emph{perversity} on $\sX$ is a function $p:\sX\tp\to\Zet$. 

Fix a presentation $\pi:X\to\sX$ of $\sX$. 
On $X$, we consider the induced perversity $\pi^*p=p\circ\pi:X\tp\to\Zet$
and the induced dualizing sheaf $\DC_X=\pi^*\DC_\sX$.
For a point $x\in\sX\tp$, take $y\in\pi^{-1}(x)\in X\tp$, and define the codimension function $$d(x)=d(y)-\codim(y/x),$$ where
$\codim(y/x)$ is the codimension of the closure of $y$ in the fiber of $\pi$ over $x$.
The definition is independent of presentation $\pi$ and preimage $y$. Set $\dim(x)=-d(x)$,
and define the dual perversity $\pbar:\sX\tp\to\Zet$
by the same formula as in the case of schemes.

\begin{Ex} Suppose $\sX$ is of finite type over a field. Then $\DC_\sX$ can be chosen in such a way
that $\dim(x)$ is the dimension of $\overline{\{x\}}$ (in the sense of stacks).
\end{Ex}

\begin{Def}\label{deft} 
\begin{align*}
D^{p,\le 0}(\sX)&=\{\F\in D^-_{coh}(\sX):\pi^*\F\in D^{\pi^*p,\le 0}(X)\}\\
D^{p,\ge 0}(\sX)&=\{\F\in D^+_{coh}(\sX):\pi^*\F\in D^{\pi^*p,\ge 0}(X)\}.
\end{align*}
\end{Def}

It follows from Lemma~\ref{smoothloc} that Definition~\ref{deft} does not depend on the choice of presentation $\pi:X\to\sX$. It is clear that Lemmas~\ref{predvar} and \ref{smoothloc}
hold for stacks. Let us now prove Proposition~\ref{Hom0sch} in these settings.

\begin{Prop}\label{Hom0} For $\F\in D^{p,\le0}(\sX)$, $\G\in D^{p,>0}(\sX)$ we have
  $\Hom(\F , \G)=0$. 
\end{Prop}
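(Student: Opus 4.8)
The plan is to reduce the statement about stacks to the already-established Proposition~\ref{Hom0sch} on schemes, using the presentation $\pi:X\to\sX$ and the simplicial (\v Cech) description of $D^b_{coh}(\sX)$ provided by Claim~\ref{simplicial}. First I would fix $\F\in D^{p,\le0}(\sX)$ and $\G\in D^{p,>0}(\sX)$ and recall that, by Definition~\ref{deft}, $\pi^*\F\in D^{\pi^*p,\le0}(X)$ and $\pi^*\G\in D^{\pi^*p,>0}(X)$; more generally, denoting by $\pi_n:X_n\to\sX$ the terms of the coskeleton $X_\bu$ and by $p_n=p\circ\pi_n$ the induced perversities, each $\pi_n^*\F\in D^{p_n,\le0}(X_n)$ and $\pi_n^*\G\in D^{p_n,>0}(X_n)$, since $\pi_n$ is again a faithfully flat Gorenstein morphism of finite type and Lemma~\ref{smoothloc} applies. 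Then by Proposition~\ref{Hom0sch} (valid on each scheme $X_n$, which admits the dualizing complex $\pi_n^*\DC_\sX$) we get $\Hom_{X_n}(\pi_n^*\F,\pi_n^*\G)=0$ for every $n\ge0$, and likewise, shifting $\F$ or $\G$, that $\Hom_{X_n}(\pi_n^*\F,\pi_n^*\G[k])=0$ for every $k\le0$.

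Next I would assemble these vanishings into the vanishing of $\Hom_\sX(\F,\G)$. The cleanest route is the descent/hypercohomology spectral sequence for $R\Hom$ along the flat cover $\pi:X\to\sX$: under the equivalence of Claim~\ref{simplicial}, $R\Hom_\sX(\F,\G)$ is computed by a totalization of the cosimplicial complex $n\mapsto R\Hom_{X_n}(\pi_n^*\F,\pi_n^*\G)$, giving a spectral sequence $E_1^{s,t}=\Ext^t_{X_s}(\pi_s^*\F,\pi_s^*\G)\Rightarrow\Ext^{s+t}_\sX(\F,\G)$. To reach $\Ext^0_\sX(\F,\G)=\Hom_\sX(\F,\G)$ one only needs the terms with $s+t=0$, i.e.\ $s\ge0$ and $t=-s\le0$; all of these vanish by the previous paragraph, hence $\Hom_\sX(\F,\G)=0$. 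I would phrase this via the natural exact sequence of low-degree terms if I want to avoid invoking the full spectral sequence: there is an exact sequence
\begin{equation*}
0\to\Hom_\sX(\F,\G)\to\Hom_{X}(\pi^*\F,\pi^*\G)\to\Hom_{X_1}(\pi_1^*\F,\pi_1^*\G),
\end{equation*}
coming from the sheaf/descent condition, and the middle term already vanishes.

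The main obstacle, and the place that needs care, is justifying that $\Hom_\sX(\F,\G)$ is computed by this \v Cech-type complex when $\F$ and $\G$ are only bounded in the appropriate one-sided senses ($\F\in D^-_{coh}$, $\G\in D^+_{coh}$) rather than both bounded; one must make sure the descent formalism of \cite{LO} (already invoked for Claim~\ref{simplicial}) applies, or reduce to the bounded case by a truncation argument using that only finitely many cohomological degrees of $\F$ and $\G$ contribute to $\Hom$ in degree $0$ together with the boundedness of the codimension function $d$ (so that the perversity conditions are really conditions in a bounded range of degrees). Once this identification is in place, everything else is formal. I would also remark that the shift-stability used above — that $\G[k]$ for $k\le0$ still lies in $D^{p,>0}(\sX)$ — is immediate since $D^{p,>0}(\sX)$ is closed under nonnegative shifts, or alternatively one simply notes that the $E_1$-terms $\Ext^t_{X_s}$ with $t<0$ vanish because $R\Hom_{X_s}(\pi_s^*\F,\pi_s^*\G)$ is concentrated in degrees $\ge0$ by Proposition~\ref{Hom0sch} applied to $\G[-t]\in D^{p_s,>0}(X_s)$.
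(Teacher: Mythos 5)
Your proof is correct and follows essentially the same route as the paper: both reduce the statement to Proposition~\ref{Hom0sch} applied on the presentation $\pi:X\to\sX$ and then descend the vanishing to $\sX$. The paper packages the descent step more economically, by induction on the $\Ext$-degree $i\le 0$ combined with \cite[Proposition~3.2.2]{BBD} (once the lower $\Ext$'s vanish, $\Hom(\F,\G[i_0])$ forms a sheaf in the flat topology), which sidesteps the convergence issues for the full descent spectral sequence that you rightly flag as the delicate point.
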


\begin{proof}
Fix $\F$ and $\G$, and let us prove that $\Ext^i(\F,\G)=0$ for $i\le 0$ by induction. The statement holds for $i\ll 0$,
because $\F\in D^-(\sX)$ and $\G\in D^+(\sX)$. It remains to prove that if the statement holds for all $i<i_0$, it also holds 
for $i=i_0$, where $i_0\le 0$ is fixed. Now \cite[Proposition~3.2.2]{BBD} shows that
homomorphisms from $\F$ to $\G[i_0]$ form a sheaf in the flat topology on $\sX$. To complete the proof, note that $\Hom(\pi^*\F,\pi^*\G[i_0])=0$ by Proposition~\ref{Hom0sch}.
\end{proof}

\begin{Def}\label{mon}
 A perversity function $p$ is 

 {\it monotone} if $p(x')\geq p(x)$ whenever $x'\in\overline{\{x\}}$, $x,x'\in \sX^{top}$;

 {\it strictly monotone} if $p(x')> p(x)$ whenever $x'\in\overline{\{x\}}$, $x,x'\in \sX^{top}$, $x\ne x'$;

 (strictly) {\it comonotone} if the dual perversity $\pbar(x )=-\dim(x )-p(x )$ is (strictly) monotone.
\end{Def}

\begin{Thm}\label{t_str} Suppose that a perversity $p$ is monotone
and comonotone. Then 
$(D^{p,\le0}(\sX)\cap D^b(\sX) , D^{p,\ge0}(\sX)\cap D^b(\sX))$ define a $t$-structure on $D^b_{coh}(\sX)$.
\end{Thm}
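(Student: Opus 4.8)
The plan is to verify the three axioms of a $t$-structure for the pair $(D^{p,\le 0}(\sX)\cap D^b_{coh}(\sX),\, D^{p,\ge 0}(\sX)\cap D^b_{coh}(\sX))$, following the strategy of \cite{BBD} but replacing the functors $j_!$ and $j_*$ by approximate versions built using Lemmas~\ref{exte} and \ref{predvar} together with the monotonicity hypotheses. The Hom-vanishing axiom $\Hom(\F,\G)=0$ for $\F\in D^{p,\le 0}$, $\G\in D^{p,\ge 1}$ is already Proposition~\ref{Hom0}. The stability under shifts $D^{p,\le 0}[1]\subset D^{p,\le 0}$ and $D^{p,\ge 0}[-1]\subset D^{p,\ge 0}$ is immediate from the pointwise definition of the two subcategories. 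So the entire content is the existence, for every $\F\in D^b_{coh}(\sX)$, of a distinguished triangle $\tau^p_{\le 0}\F\to\F\to\tau^p_{\ge 1}\F\to$ with $\tau^p_{\le 0}\F\in D^{p,\le 0}$ and $\tau^p_{\ge 1}\F\in D^{p,\ge 1}$.

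For the truncation triangle I would argue by Noetherian induction on $\sX$, stratifying by the (topologically) locally closed substacks on which the perversity is locally constant. Pick a dense open substack $j:\sU\imbed\sX$ on which $\pi^*p$ is constant near the generic points, say equal to $n$, with closed complement $i:\sZ\imbed\sX$; on $\sU$ the perverse truncation is just the ordinary truncation $\tau_{\le n}$, $\tau_{\ge n+1}$ shifted appropriately, which exists since $D^b_{coh}(\sU)$ carries the standard $t$-structure. The key construction: given $\F$, first restrict to $\sU$, apply the standard truncation there, and then \emph{extend} the resulting object $\tau_{\le n}(\F|_\sU)$ back to $\sX$ using Lemma~\ref{exte}\eqref{exte:a}--\eqref{exte:c}, but \emph{choosing the extension so that its $!$-restriction to $\sZ$ lives in degrees $\ge$ (the relevant bound)}. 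This is the coherent substitute for $j_!$: monotonicity of $p$ guarantees that on $\sZ$ the required degree bound is $\ge p$ on $\sZ$'s points, and one produces such an extension by truncating a chosen coherent extension — this is where the ``very easy'' construction promised in the introduction is carried out. Dually, using comonotonicity and Grothendieck--Serre duality (Lemma~\ref{predvar}\eqref{predvar:a}, which exchanges $D^{p,\le 0}$ and $D^{\pbar,\ge 0}$), one obtains the substitute for $j_*$. Then one glues: given the triangle on $\sU$ relating $j^*\F$ to its standard truncations, one lifts it to a triangle over $\sX$ whose outer terms are the chosen extensions, uses the octahedron axiom to combine with the triangle $i_*i^!\F\to\F\to j_*j^*\F\to$, and applies the inductive hypothesis on $\sZ$ to further truncate the part supported on $\sZ$ — Lemma~\ref{closedimbed} lets one pass from a closed subset to an actual closed substack there, and Lemma~\ref{predvar}\eqref{predvar:b},\eqref{predvar:c} ensure the perversity conditions propagate correctly under $i_*$, $i^*$, $i^!$.

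Concretely the induction step looks as follows. Assume the theorem (or rather the existence of the truncation triangle, which formally implies the theorem) for all proper closed substacks. Given $\F\in D^b_{coh}(\sX)$, pick $\sU$, $\sZ$ as above. On $\sU$, form $A_0=\tau^p_{\le 0}(j^*\F)$ and $B_0=\tau^p_{\ge 1}(j^*\F)$ with $j^*\F\to B_0\to A_0[1]\to$. Using the extension lemmas and the monotone/comonotone hypotheses, choose $A\in D^{p,\le 0}(\sX)$ with $j^*A\simeq A_0$ and a comonotone-type extension $B\in D^b_{coh}(\sX)$ of $B_0$ with $i^!B\in D^{p_\sZ,\ge 1}(\sZ)$, together with a triangle $\F'\to B\to A[1]\to$ extending the one on $\sU$, where $\F'$ agrees with $\F$ over $\sU$. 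The cones of $\F\to\F'$ and of the comparison maps are supported on $\sZ$; applying Lemma~\ref{closedimbed} and the inductive hypothesis on $\sZ$, truncate these, and reassemble via repeated use of the octahedron axiom to get the desired triangle for $\F$, checking at each stage with Lemma~\ref{predvar} and Proposition~\ref{Hom0} that the two outer terms land in $D^{p,\le 0}$ and $D^{p,\ge 1}$ respectively and that the maps are forced (uniqueness from Proposition~\ref{Hom0}).

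The main obstacle — and the place I expect to spend the most care — is the precise construction of the two ``approximate adjoint'' extensions and the verification that the gluing data (the connecting maps in the lifted triangles, and the interplay of the octahedra) actually produces objects satisfying the \emph{pointwise} $!$- and $*$-degree bounds at \emph{all} points of $\sZ$, not merely the generic ones; this is exactly where monotonicity and comonotonicity enter and cannot be dispensed with, and it is the step with no analogue in the purely formal part of \cite{BBD}. Handling boundedness (staying inside $D^b_{coh}$ rather than $D^{\pm}_{coh}$) requires a small additional argument: one checks that the chosen extensions can be taken bounded because $\DC_\sX$ has finite local injective dimension and $d(x)$ is bounded, so only finitely many perversity values occur and the induction terminates. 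Everything else — shift-stability, Hom-vanishing, and the formal manipulation of triangles — is either already proved above or routine.
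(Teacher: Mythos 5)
Your plan is correct in strategy and is essentially the argument of the paper: reduce to the existence of the truncation triangle (Hom-vanishing being Proposition~\ref{Hom0}), run a Noetherian induction, split $\F$ generically into a low part and a high part using a monotone-truncation substitute for $j_!$ and its Grothendieck--Serre dual substitute for $j_*$, and push the error term onto a proper closed substack. The difference is in how the two approximations are built, and the paper's choice eliminates exactly the bookkeeping you flag as the main obstacle. Instead of extending $\tau_{\le n}(j^*\F)$ from $\sU$ and then lifting the triangle (which forces you to extend morphisms, compare extensions via roofs, and control several cones supported on $\sZ$), the paper picks one generic point $y$ and a closed irreducible substack $\sY$ agreeing with $\sX$ near $y$, and sets $\F^-=\tau^{stand}_{\le p(y)}(i_{\sY*}i_\sY^!\F)$: this is automatically coherent and bounded, lies in $D^{p,\le 0}(\sX)$ because it is supported on $\overline{\{y\}}$ and sits in standard degrees $\le p(y)\le p(x)$ by monotonicity, and comes with a \emph{canonical} map to $\F$. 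The high part is then built from $\F_1=\cone(\F^-\to\F)$ by the dual formula $\F^+=\D(\tau^{stand}_{<\pbar(y)}i_{\sY*}i_\sY^!(\D\F_1))$, again with a canonical map $\F_1\to\F^+$ that is an isomorphism at $y$; the single error term $\cone(\F_1\to\F^+)$ vanishes at $y$, hence is pushed forward from a proper closed substack by Lemma~\ref{closedimbed} and absorbed by the induction via associativity of the $*$-operation of \cite[1.3.10]{BBD} --- no repeated octahedra or uniqueness arguments are needed. Two small slips in your write-up: the substitute for $j_!$ must have its $*$-restriction to $\sZ$ in \emph{low} standard degrees (your first description asks for the $!$-restriction to lie in high degrees, which is the condition for the $j_*$-substitute; your ``concretely'' paragraph assigns the conditions correctly), and ``$p$ constant equal to $n$ near the generic points'' need not hold when $\sX$ is reducible --- which is why the paper works with one generic point and one irreducible component at a time, letting the remaining components fall into the error term.
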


\begin{proof}
Set $D^{p,\le0}(\sX)^b=D^{p,\le0}(\sX)\cap D^b(\sX)$, $D^{p,>0}(\sX)^b=D^{p,>0}(\sX)\cap D^b(\sX)$.
In view of Proposition~\ref{Hom0}, we have only to construct for
any $\F\in D^b_{coh}(\sX)$ a distinguished triangle
$$\F'\to \F\to \F''\to\F'[1]$$ 
with $\F'\in D^{p,\leq 0}(\sX)^b$, $\F''\in D^{p,>0}(\sX)^b$.

We abuse notation as follows. For a category
$\A$, we denote the set of isomorphism classes of $\ob(\A)$ by the same letter 
$\A$. In particular, for a functor $F:\A\to \B$, we denote by $F(\A)$ the image of
the induced map from the set of isomorphism classes of $\ob(\A)$ to that of $\ob(\B)$.

Let us also adopt the following convention
(see \cite{BBD}, 1.3.9).  If $D'$, $D''$ are sets of (isomorphism classes)
of objects of a triangulated category $D$, then $D'*D''$ is the set
of  (isomorphism classes) of objects of $D$, defined by the following condition:
$B\in D'*D''$ if and only if there exists a distinguished triangle 
$$A\to B\to C\to A[1]\quad(A\in D',C\in D'').$$
The octahedron axiom implies (see \cite{BBD}, Lemma 1.3.10)
that the $*$ operation is associative: $$(D'*D'')*D'''
=D'*(D''*D''').$$ Thus the meaning of the
expression $D_1*\dots *D_n$ is unambiguous. 

In this notation, we need to prove that 
\begin{equation*}%\label{xotim}
D^b_{coh}(\sX)\subset D^{p,\leq 0}(\sX)^b*D^{p,>0}(\sX)^b.
\end{equation*}

We proceed by Noetherian induction; thus we can assume 
that for any closed substack $\sZ\subsetneq\sX$,
\begin{equation*}
D^b_{coh}(\sZ)\subset D^{p_{\sZ},\leq 0}(\sZ)^b*D^{p_{\sZ},>0}(\sZ)^b,
\end{equation*}
where $p_\sZ:\sZ\tp\to\Zet$ is the induced perversity.
By Lemma~\ref{predvar}\eqref{predvar:c}, it suffices to show that 
\begin{equation}\label{reduce}
D^b_{coh}(\sX)\subset\cupl_\sZ D^{p,\leq 0}(\sX)^b* i_{\sZ*}D^b_{coh}(\sZ) * D^{p,>0}(\sX)^b,
\end{equation}
where the union is over closed substacks $i_{\sZ}:\sZ\hookrightarrow\sX$, $\sZ\ne\sX$.

Let us prove \eqref{reduce}. Fix $\F\in D^b_{coh}(X)$. Choose a generic point $y\in\sX^{top}$
and a closed irreducible substack $i_\sY:\sY\hookrightarrow\sX$ that coincides with $\sX$ in
a neighborhood of $y$. Consider $i_{\sY*}i^!_\sY(\F)\in D^+_{coh}(\sX)$, and set
$$\F^-=\tau_{\leq p(y)}^{stand}(i_{\sY*}i^!_\sY(\F)).$$
Here $\tau_{\le n}^{stand}:D(\sX)\to D^{\le n}(\sX)$ is the truncation functor
for the usual $t$-structure ($n\in\Zet$).
Clearly $\F^-\in D^b_{coh}(\sX)$.
Moreover, $\F^-\in D^{p,\leq 0}(\sX)^b$, because it is supported on $\sY$
and $p$ is monotone. Note that $\F^-$ is equipped with
a canonical morphism $\F^-\to \F$. Set
$$\F_1=\cone(\F^-\to\F).$$ 
Over an open subset of $\sY$,
$\F_1$ is concentrated in cohomological degrees above $p(y)$.

The dual procedure (in the sense of Grothendieck-Serre duality)
gives $\F^+ \in D^{p,>0}(\sX)$ and a morphism $f:\F_1\to \F^+$ that is an isomorphism over $y$.
More precisely, set
$$\F^+= \D( \tau^{stand}_{< \pbar(y ) } i_{\sY *}i^!_\sY(\D(\F_1))).$$
Since $p$ is comonotone, we see by Lemma \ref{predvar}\eqref{predvar:a} that
$\F^+\in D^{p,>0}(\sX)^b$. Note that the duality is exact over the generic point $y$
(\cite{H}, \S V.6), which implies that $f$ is an isomorphism at $y$.
%!! I really cut some stuff here to avoid talking about local rings on stack; maybe should expand this. and pass to presentation

Thus $$\F^0=\cone(\F_1\to \F^+)$$ vanishes at $y$.
By Lemma~\ref{closedimbed}, there is a closed substack $i_\sZ:\sZ\hookrightarrow\sX$, $\sZ\ne\sX$ such that
$$\F^0\cong i_{\sZ*}(\F_\sZ)\quad(\F_\sZ\in D^b_{coh}(\sZ)).$$

Therefore, $$\F\in \{\F^-\}*\{\F^0[-1]\}
*\{\F^+\}\subset D^{p,\leq 0}(\sX)^b*i_{\sZ*}D^b_{coh}(\sZ) * D^{p,>0}(\sX)^b,$$
 which proves \eqref{reduce}. 
\end{proof}

\begin{Rem*} Construction of an object $\F^+\in D^{p,>0}(\sX)^b$
 with given generic fiber (and with a morphism from a given object)
is the only place where we use the duality formalism on stacks.
\end{Rem*}

\begin{Cor} $(D^{p,\le0}(\sX), D^{p,\ge0}(\sX))$ define a $t$-structure on $D_{coh}(\sX)$.
In other words, Theorem~\ref{t_str} holds for the unbounded derived category.
\end{Cor}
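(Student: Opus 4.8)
The plan is to deduce the unbounded statement from the bounded one (Theorem~\ref{t_str}) together with Proposition~\ref{Hom0}, which already holds without any boundedness restriction. The orthogonality axiom $\Hom(\F,\G)=0$ for $\F\in D^{p,\le0}(\sX)$, $\G\in D^{p,>0}(\sX)$ is exactly Proposition~\ref{Hom0}, so the only thing to establish is that every $\F\in D_{coh}(\sX)$ fits into a truncation triangle $\F'\to\F\to\F''\to\F'[1]$ with $\F'\in D^{p,\le0}(\sX)$ and $\F''\in D^{p,>0}(\sX)$. The idea is to reduce to the bounded case by first applying the \emph{standard} truncation functors: for $\F\in D_{coh}(\sX)$ and suitable integers $a\ll 0\ll b$ (chosen using that $d$, and hence the perversity gap relative to the standard $t$-structure, is bounded on $\sX\tp$ since $\DC_\sX$ has finite injective dimension — see the discussion before Example~\ref{finitetypescheme}), the standard-truncated piece $\tau^{stand}_{\le a}\F$ already lies in $D^{p,\le0}(\sX)$ and $\tau^{stand}_{>b}\F$ already lies in $D^{p,>0}(\sX)$, purely by the degree bounds on $\ip_x^*$ and $\ip_x^!$.

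Concretely, first I would pick $a$ so small that $a\le p(x)$ for all $x$; then for any $\F\in D_{coh}(\sX)$, $\pi^*\tau^{stand}_{\le a}\F\in D^{\le a}(X)$ has $\ip_x^*$ concentrated in degrees $\le a\le\pi^*p(x)$, so $\tau^{stand}_{\le a}\F\in D^{p,\le0}(\sX)$. Dually, using that $\ip_x^!$ has finite cohomological dimension and $p$ is bounded below, pick $b$ so large that $\tau^{stand}_{>b}\F\in D^{p,>0}(\sX)$. The octahedron axiom then lets me write $\F$ as an extension of $\tau^{stand}_{>b}\F$ by a complex $\F_0$ with $\F_0\in D^{[a+1,b]}_{coh}(\sX)\subset D^b_{coh}(\sX)$ sitting over $\tau^{stand}_{\le a}\F$; more precisely, let $\F_0=\cone(\tau^{stand}_{\le a}\F\to\tau^{stand}_{\le b}\F)$, which is bounded, and observe $\tau^{stand}_{\le a}\F\in D^{p,\le0}(\sX)$. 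Apply Theorem~\ref{t_str} to $\F_0\in D^b_{coh}(\sX)$ to get $\F_0'\to\F_0\to\F_0''$ with $\F_0'\in D^{p,\le0}(\sX)^b$, $\F_0''\in D^{p,>0}(\sX)^b$. Now I splice: $D^{p,\le0}(\sX)$ is closed under extensions (being defined by the conditions $\ip_x^*(-)\in D^{\le\pi^*p(x)}$, which are stable under cones in a triangle once two of three terms satisfy them) and likewise $D^{p,>0}(\sX)$ is closed under extensions, so from the tower $\tau^{stand}_{\le a}\F$, $\F_0'$, $\F_0''$, $\tau^{stand}_{>b}\F$ one assembles, using associativity of the $*$-operation as recalled in the proof of Theorem~\ref{t_str}, a triangle $\F'\to\F\to\F''$ with $\F'\in D^{p,\le0}(\sX)$, $\F''\in D^{p,>0}(\sX)$.

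The one point that needs a little care — and the main obstacle — is the interplay between the \emph{standard} truncation functors and the perversity conditions in the unbounded setting: I must make sure that $\tau^{stand}_{\le a}$ and $\tau^{stand}_{>b}$ of an object of $D_{coh}(\sX)$ again have coherent cohomology (clear, since they are honest cohomology sheaves of $\F$) and genuinely land in $D^{p,\le0}(\sX)\subset D^-_{coh}(\sX)$ respectively $D^{p,>0}(\sX)\subset D^+_{coh}(\sX)$. Coherence of $D^{p,\le0}$ objects being bounded above (and $D^{p,\ge0}$ objects bounded below) is forced by the uniform bounds on $d(x)$, so this is automatic. I would also double-check the closure-under-extensions claims for $D^{p,\le0}(\sX)$ and $D^{p,>0}(\sX)$: the pointwise conditions $\ip_x^*(\F)\in D^{\le p(x)}(\cO_x)$ pass to cones in a distinguished triangle when satisfied by the other two terms because $\ip_x^*$ is exact and $D^{\le p(x)}$ is closed under extensions in $D(\cO_x\md)$; for the $!$-side the same argument applies since $\ip_x^!$ sends distinguished triangles to distinguished triangles and $D^{\ge p(x)}$ is closed under extensions. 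Once these are in place the assembly via the $*$-calculus is routine, and we conclude, together with Proposition~\ref{Hom0}, that $(D^{p,\le0}(\sX),D^{p,\ge0}(\sX))$ is a $t$-structure on $D_{coh}(\sX)$.
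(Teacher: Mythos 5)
Your proposal is correct and is essentially the paper's own argument: the paper likewise observes that $D^{<-N}_{coh}(\sX)\subset D^{p,\le0}(\sX)$ and $D^{>N}_{coh}(\sX)\subset D^{p,>0}(\sX)$ for $N\gg0$, writes $D_{coh}(\sX)=D^{<-N}_{coh}(\sX)*D^b_{coh}(\sX)*D^{>N}_{coh}(\sX)$ (which is exactly your three-piece standard-truncation decomposition), applies Theorem~\ref{t_str} to the bounded middle piece, and absorbs the outer pieces using closure of $D^{p,\le0}$ and $D^{p,>0}$ under extensions via the associativity of the $*$-operation. Your extra care about why the extreme standard truncations land in the perverse half-categories (boundedness of $p$, exactness of $\ip_x^*$, finite cohomological dimension of $\ip_x^!$) only makes explicit what the paper's ``Clearly'' leaves implicit.
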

\begin{proof} We need to prove that
\begin{equation*}%\label{xotim}
D_{coh}(\sX)=D^{p,\leq 0}(\sX)*D^{p,>0}(\sX),
\end{equation*}
where $*$ is defined in the proof of Theorem~\ref{t_str}. Clearly,
$$D^{p,\leq0}(\sX)\supset D^{<-N}_{coh}(\sX)\quad\text{and}\quad
D^{p,>0}(\sX)\supset D^{>N}_{coh}(\sX)\quad(N\gg0).
$$
Therefore,
\begin{multline*}
D^{p,\leq 0}(\sX)*D^{p,>0}(\sX)=
D^{<-N}_{coh}(\sX)*D^{p,\leq 0}(\sX)*D^{p,>0}(\sX)*D^{>N}_{coh}(\sX)\supset\\
D^{<-N}_{coh}(\sX)*D^b_{coh}(\sX)*D^{>N}_{coh}(\sX)=D_{coh}(\sX).
\end{multline*}
\end{proof}

\begin{Cor}\label{cohext}
Let $j:\sU\imbed\sX$ be an open substack, $p:\sX\tp\to \Zet$ be a monotone and
comonotone perversity, and $\F \in D^{p,\geq 0}(\sU)\cap D^b_{coh}(\sU)$.
Consider $j_*(\F)\in D^b_{qcoh}(\sX)$, and let $n=\min\limits_{x\not\in \sU\tp}
p(x)$. Then $\tau^{stand}_{\leq n-2}(j_*(\F))$ has coherent cohomology.
Recall that $\tau^{stand}_{\leq n-2}$ is the truncation functor for the usual $t$-structure.
\end{Cor}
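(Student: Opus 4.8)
The plan is to reduce to the scheme case via a presentation, and then to leverage Grothendieck's Finiteness Theorem (\cite{SGA2}, VIII.2.1), in the form in which it is applied in \cite{D}. First I would fix a presentation $\pi:X\to\sX$ and recall that, by Lemma~\ref{smoothloc} and the flat-base-change compatibility of $j^*$, it suffices to prove the statement on the scheme $X$ for the open subscheme $\pi^{-1}(\sU)$ and the perversity $\pi^*p$; indeed $\pi^*$ commutes with $\tau^{stand}_{\le n-2}$ and detects coherence of cohomology sheaves (as $\pi$ is faithfully flat). So from now on $X$ is a Noetherian scheme admitting a dualizing complex, $j:U\imbed X$ is open, and $\F\in D^{p,\ge0}(U)\cap D^b_{coh}(U)$.

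The key step is to control the cohomology sheaves of $j_*\F$ in low degrees. Write $\ip:\bfZ\imbed X\tp$ for the closed embedding of the complement $X\tp-U\tp$. For a closed point $x\in\bfZ$, or more generally any $x\notin U\tp$, the condition $n=\min_{x\notin\sU\tp}p(x)$ together with the distinguished triangle
$$\ip_*\ip^!(j_*\F)\to j_*\F\to j_*\F\to\ip_*\ip^!(j_*\F)[1]$$
(the middle map being the identity, so $\ip_*\ip^!(j_*\F)=0$ — this is a degenerate instance; the honest statement is rather that $j^*j_*\F=\F$, so the cone of $j_*\F\to Rj_*\F$ vanishes and we only need to understand $j_*$ versus $Rj_*$) shows one must instead argue as follows. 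I would apply Grothendieck's Finiteness Theorem to each cohomology sheaf $\cH^q(Rj_*\F)$: for $q$ below a bound governed by the depth of $\F$ along $\bfZ$ — which is exactly where $p(x)$ for $x\notin\sU\tp$ and the hypothesis $\F\in D^{p,\ge0}(U)$ enter, via Lemma~\ref{equiv}\eqref{equiv:b} relating $\ip_x^!\F\in D^{\ge p(x)}$ to a coherence statement on subschemes through $x$ — the sheaf $\cH^q(Rj_*\F)$ is coherent. Concretely, for each $x\notin U\tp$ the stalk $\ip_x^*(Rj_*\F)$ is computed by local cohomology, and $\ip_x^!(\F)\in D^{\ge p(x)}(\cO_x\text{-mod})$ forces $\cH^q$ of the relevant local cohomology complexes to vanish for $q\le p(x)-2$ (the shift by $2$ is the usual one: passing from $\ip^!$ on the punctured neighborhood to $j_*$ costs a degree, and coherence of $\cH^q$ of a pushforward requires $\cH^q$ and $\cH^{q+1}$ of the local-cohomology complex to be finitely generated, VIII.2.3 of \cite{SGA2}). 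Taking the minimum over all $x\notin U\tp$ gives the bound $n-2$, and below that degree $Rj_*\F$ — hence $j_*\F=\cH^0$ of a suitable representative together with the truncation — has coherent cohomology. I would cite Remark~\ref{Grfin} for the precise bookkeeping.

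The main obstacle will be the precise formulation of the Finiteness Theorem adapted to a non-closed point and to complexes rather than sheaves: Grothendieck's statement is about a sheaf on a scheme with a closed subset, bounding the degree below which $R^q j_*$ is coherent in terms of depth conditions, and here I need it uniformly over the (possibly non-closed, infinitely many) points of $\bfZ$, applied degree-by-degree to the cohomology sheaves of the complex $\F$. The clean way around this is to first replace $\F$ by a single coherent sheaf via the standard truncation/dévissage (the hypothesis $\F\in D^{p,\ge0}(U)$ passes to cohomology sheaves only after a shift, so one must keep careful track of how $p$ interacts with $\tau^{stand}$), reducing to the classical statement VIII.2.3; then the bound $p(x)-2$ for each $x$, minimized over $x\notin U\tp$, yields exactly $n-2$. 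Everything else — the reduction to schemes, the compatibility of $\pi^*$ with truncation, the identification of stalks of $Rj_*$ with local cohomology — is routine.
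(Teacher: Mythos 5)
Your proposal diverges from the paper's argument in an essential way, and as written it has a genuine gap. The paper's proof does \emph{not} invoke Grothendieck's Finiteness Theorem: after reducing to a scheme $X$ exactly as you do, it takes an arbitrary coherent extension $\Ftil\in D^b_{coh}(X)$ of $\F$ (Lemma~\ref{exte}) and replaces it by its perverse truncation so that $\Ftil\in D^{p,\geq 0}(X)$ while still $\Ftil|_U\simeq\F$; the triangle to use is then
$$\ip_*\ip^!(\Ftil)\to\Ftil\to j_*(\F)\to\ip_*\ip^!(\Ftil)[1],$$
and since $i_Z^!(\Ftil)\in D^{\geq n}_{coh}(Z)$ for every closed subscheme $Z$ with $Z\tp=X\tp-U\tp$ (Lemma~\ref{predvar}\eqref{predvar:b} together with $p(x)\ge n$ off $U$), Lemma~\ref{lim} gives $\ip_*\ip^!(\Ftil)\in D^{\geq n}_{qcoh}(X)$, so in degrees $\le n-2$ the cohomology of $j_*\F$ agrees with that of $\Ftil$, which is coherent. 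This is the step your write-up circles around but never lands on: the triangle you write down is applied to $j_*\F$ itself, where $\ip_*\ip^!(j_*\F)=0$ and everything degenerates (as you notice), and you do not replace it with the triangle for a coherent extension. The entire role of the hypothesis that $p$ is monotone and comonotone is to make the perverse $t$-structure, hence the truncation producing $\Ftil\in D^{p,\ge0}(X)$, available; your proposal never uses it.

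The gaps in your alternative route via \cite{SGA2} are real, not just bookkeeping. First, for $x\notin\sU\tp$ the object $\ip_x^!(\F)$ is not defined ($\F$ lives on $\sU$), so the condition ``$\ip_x^!(\F)\in D^{\ge p(x)}$ for $x\notin\sU\tp$'' that drives your depth estimate has no meaning; the hypothesis $\F\in D^{p,\ge0}(\sU)$ constrains only points of $\sU$, and the minimum $n$ over points outside $\sU$ enters the paper's proof only through the extension $\Ftil$. Second, the d\'evissage to a single coherent sheaf is not harmless: the condition $\ip_x^!\F\in D^{\ge p(x)}$ does not pass to the individual cohomology sheaves $\cH^q(\F)$ (local cohomology of a complex is not computed degree by degree), so the reduction to the sheaf statement of \cite{SGA2} requires an argument you have not supplied. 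Remark~\ref{Grfin} does assert that the corollary is \emph{equivalent} to the Finiteness Theorem, so your strategy is not hopeless in principle, but establishing that equivalence is essentially the content of the corollary; deferring it to ``precise bookkeeping'' leaves the proof incomplete, whereas the extension-plus-triangle argument is short and self-contained.
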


\begin{proof} Passing to a presentation of $\sX$, we can assume that $X=\sX$ is a scheme and $U=\sU$ is an open subscheme of $X$. 
Let $\Ftil\in D^b_{coh}(X)$ be any extension of $\F$
(see Lemma~\ref{exte}). Truncating $\Ftil$ with respect to the perverse t-structure,
we can achieve that $\Ftil\in D^{p,\geq 0}(X)$. Consider the closed embedding
$\ip:X\tp-U\tp\hookrightarrow X\tp$. It yields a distinguished triangle
$$\ip_*\ip^!(\Ftil) \to \Ftil \to j_*(\F)\to \ip_*\ip^!(\Ftil)[1].$$ 
Since $\Ftil$ has coherent cohomology, it is enough to show that
$$\tau^{stand}_{\leq n-2}(\ip_*\ip^!(\Ftil)[1])\in D^b_{coh}(X).$$
However, the assumption $\Ftil\in D^{p,\geq 0}(X)$ implies
that for any closed subscheme $Z\subset X$ with $Z\tp=X\tp-U\tp$, we have
$$i_Z^!(\Ftil)\in D^{p,\geq 0}(Z)\subset D^{\geq n}_{coh}(Z).$$
Hence $$\ip_*\ip^!(\Ftil)\in D^{\geq n}_{qcoh}(X),$$ and
 $\tau^{stand}_{\leq n-2}(\ip_*\ip^!(\Ftil)[1])=0$. 
 \end{proof}

\begin{Rem}\label{Grfin}
This statement and idea of proof of Corollary~\ref{cohext} are copied
from Deligne's message to the second author. 
(Possible mistakes belong to the authors.) It was pointed out to us by 
Deligne that Corollary~\ref{cohext}
 is equivalent to
the Grothendieck Finiteness Theorem, \cite[VIII.2.1]{SGA2}.
\end{Rem}

\section{Coherent IC sheaves}\label{IC} 
\subsection{IC extension}
Suppose now that $p:\sX\tp\to\Zet$ is a monotone and comonotone perversity
function. As before, $\sX$ is a semi-separated Noetherian stack admitting a dualizing sheaf.
Denote by $\cP=\cP_\sX=\cP_{\sX,p}\subset D^b_{coh}(\sX)$ the core of the $t$-structure 
constructed in the previous section.

Let  $\bfZ\subset\sX\tp$ be a closed subset. Define auxiliary perversities 
$p^\pm=p^\pm_{(\bfZ)}:\sX\tp\to\Zet$ by
$$p^-(x)=\begin{cases}p(x), &x\not\in\bfZ\\p(x)-1,&x\in\bfZ;\end{cases}\qquad
p^+(x)=\begin{cases}p(x), &x\not\in\bfZ\\p(x)+1,&x\in\bfZ.\end{cases}$$

\begin{Lem}\label{orto}
 Let $\F\in\cP_\sX$.
\begin{enumerate}
\item\label{orto:a} The following conditions are equivalent:
\begin{enumerate}
\item\label{orto:i} $\F\in D^{p^-,\leq 0}(\sX)$.
\item\label{orto:ii} $i_\sZ^*(\F)\in D^{p_{\sZ},<0}(\sZ)$ for any closed substack
$\sZ\subset\sX$ with $\sZ\tp\subset\bfZ$.
\item\label{orto:iii} $\Hom(\F, \G)=0$ for all $\G\in \cP$ such that $\supp\G\subset\bfZ$.
\end{enumerate}
\item\label{orto:b} The following conditions are equivalent:
\begin{enumerate}
\item $\F\in D^{p^+,\ge 0}(\sX)$.
\item $i_\sZ^!(\F)\in D^{p_{\sZ},>0}(\sZ)$ for any closed substack $\sZ\subset\sX$ with $\sZ\tp\subset\bfZ$.
\item $\Hom(\G, \F)=0$ for all $\G\in \cP$ such that $\supp\G\subset\bfZ$.
\end{enumerate}
\end{enumerate}
\end{Lem}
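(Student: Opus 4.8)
The plan is to prove both parts simultaneously by duality, so I will focus on part \eqref{orto:a} and note that part \eqref{orto:b} follows by applying $\D$: since $\F\in\cP_{\sX,p}$ implies $\D\F\in\cP_{\sX,\pbar}$ (by Lemma~\ref{predvar}\eqref{predvar:a}), that $\D$ exchanges $D^{p^+,\ge 0}$ with $D^{\pbar^-,\le 0}$ (one checks $\overline{p^+}=\pbar^-$ directly from the formula $\pbar(x)=-\dim(x)-p(x)$), that $\D$ exchanges $i_\sZ^!$ with $i_\sZ^*$ up to the dualizing complex on $\sZ$, and that $\D$ reverses $\Hom$'s and preserves support. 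So the whole statement reduces to the three implications within \eqref{orto:a}.

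For \eqref{orto:a}, first I would prove \eqref{orto:i}$\Leftrightarrow$\eqref{orto:ii}. By definition $\F\in D^{p^-,\le 0}(\sX)$ means $\pi^*\F$ lies in the appropriate category on a presentation, which amounts to $\ip_x^*(\pi^*\F)\in D^{\le p^-(x)}$ for all points; for $x\notin\bfZ$ this is automatic since $\F\in\cP\subset D^{p,\le 0}$, so the only new content is at points of $\bfZ$, where it says $\ip_x^*(\F)\in D^{<p(x)}$. By Lemma~\ref{equiv}\eqref{equiv:a}, this is equivalent to the existence, near each generic point $x$ of a stratum inside $\bfZ$, of a locally closed $Z^0\ni x$ with $i_{Z^0}^*\F\in D^{<p(x)}_{coh}$; passing to closed substacks $\sZ$ with $\sZ\tp\subset\bfZ$ and using Lemma~\ref{predvar}\eqref{predvar:b} (which gives $i_\sZ^*\F\in D^{p_\sZ,\le 0}(\sZ)$ automatically) one translates the strict inequality at every point of $\bfZ$ into the condition $i_\sZ^*\F\in D^{p_\sZ,<0}(\sZ)$ for all such $\sZ$. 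Reducing to a single reduced closed substack with underlying space $\bfZ$ and checking the condition stratum by stratum makes this routine.

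Next, \eqref{orto:ii}$\Rightarrow$\eqref{orto:iii}: given $\G\in\cP$ with $\supp\G\subset\bfZ$, by Lemma~\ref{closedimbed} write $\G\cong i_{\sZ*}\G_\sZ$ for a closed substack $\sZ$ with $\sZ\tp\subset\bfZ$. Then by adjunction $\Hom(\F,\G)=\Hom(\F,i_{\sZ*}\G_\sZ)=\Hom(i_\sZ^*\F,\G_\sZ)$. Now $i_\sZ^*\F\in D^{p_\sZ,<0}(\sZ)$ by hypothesis, i.e. $i_\sZ^*\F\in D^{p_\sZ-1,\le 0}(\sZ)$, and $\G_\sZ\in\cP_{\sZ,p_\sZ}$ — but I should be slightly careful: $\G_\sZ$ is only known to satisfy $i_{\sZ*}\G_\sZ\in\cP_{\sX,p}$, which by Lemma~\ref{predvar}\eqref{predvar:c} (applied in reverse at the generic points of $\sZ$) gives $\G_\sZ\in D^{p_\sZ,\ge 0}(\sZ)$, in particular $\G_\sZ\in D^{p_\sZ,>0}(\sZ)$... no: $\G_\sZ\in D^{p_\sZ,\ge 0}$, and $i_\sZ^*\F\in D^{p_\sZ-1,\le 0}=D^{p_\sZ,<0}$. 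Applying Proposition~\ref{Hom0} on $\sZ$ with perversity $p_\sZ$ shifted appropriately (objects in $D^{q,\le 0}$ and $D^{q,>0}$ have no Homs, and here the first object is even more negative) yields $\Hom(i_\sZ^*\F,\G_\sZ)=0$.

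Finally, \eqref{orto:iii}$\Rightarrow$\eqref{orto:i}: suppose $\F\notin D^{p^-,\le 0}(\sX)$. Then there is a point $x\in\bfZ$ with $\ip_x^*\F\notin D^{<p(x)}$, i.e. (since $\F\in\cP\subset D^{p,\le 0}$) $\H^{p(x)}(\ip_x^*\F)\neq 0$; take $x$ with $\overline{\{x\}}$ minimal with this property. Let $\sZ$ be a reduced irreducible closed substack with generic point $x$; by Lemma~\ref{equiv} there is an open substack of $\sZ$ on which $i_\sZ^*\F$ has nonzero top cohomology $\H^{p(x)}$, a nonzero coherent sheaf supported everywhere on that open set. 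I would then build a nonzero map from $\F$ to an object of $\cP$ supported on $\bfZ$: truncating, $\tau_{\ge p(x)}^{stand}(i_\sZ^*\F)$ maps onto $\H^{p(x)}(i_\sZ^*\F)[-p(x)]$; shrinking $\sZ$ and pushing forward by $i_{\sZ*}$ produces a nonzero object $\G$ of $D^{p,\le 0}(\sX)$ supported on $\bfZ$, with a nonzero map $\F\to\G$ (the top cohomology surjects). Replacing $\G$ by its perverse truncation $\tau_{\le 0}^p\G\in\cP$ — which is supported on $\bfZ$ and still receives a nonzero map from $\F$ since $\F\in D^{p,\le 0}$ — contradicts \eqref{orto:iii}. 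The main obstacle I anticipate is this last implication: getting a genuinely \emph{nonzero} map from $\F$ into something in $\cP$ supported on $\bfZ$ requires care with truncations and with ensuring the map survives to the perverse heart; the other two implications are essentially formal consequences of adjunction, Lemma~\ref{equiv}, Lemma~\ref{closedimbed}, and Proposition~\ref{Hom0}.
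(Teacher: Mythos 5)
Your handling of (i)$\Leftrightarrow$(ii) and of (ii)$\Rightarrow$(iii) is correct and is essentially the paper's argument: the paper phrases (ii)$\Leftrightarrow$(iii) through the general fact that for $A\in D^{\le 0}$ one has $A\in D^{<0}$ if and only if $\Hom(A,B)=0$ for all $B$ in the heart, which is your adjunction-plus-Proposition~\ref{Hom0} argument in disguise. The reduction of part (b) to part (a) by Grothendieck--Serre duality (using $\overline{p^+}=(\pbar)^-$) is also fine; the paper simply says the proof of (b) is analogous.

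The gap is exactly where you feared, in (iii)$\Rightarrow$(i), and it is genuine on two counts. First, after shrinking $\sZ$ to an open substack where the top standard cohomology of $i_\sZ^*\F$ is a nonzero sheaf, that substack is only locally closed in $\sX$, so pushing forward from it does not preserve coherence and the adjunction no longer lands you in $\cP$; you must work on the closed substack $\sZ$ itself. Second, and more seriously, your final move --- replacing $\G\in D^{p,\le 0}(\sX)$ by a perverse truncation in $\cP$ and asserting the map from $\F$ survives ``since $\F\in D^{p,\le 0}$'' --- is not justified: composing with $\G\to\tau^{p}_{\ge 0}\G$ kills exactly the maps factoring through $\tau^{p}_{<0}\G$, and $\Hom(\F,\tau^{p}_{<0}\G)$ need not vanish for $\F$ in the heart (the $t$-structure orthogonality points the other way). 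The repair, which is what the paper does, is to truncate the \emph{source} rather than the target: for $A=i_\sZ^*\F\in D^{p_\sZ,\le 0}(\sZ)$ and any $B$ in the heart $\cP_\sZ$, the truncation triangle gives $\Hom(A,B)\cong\Hom(H^0_{p}(A),B)$, where $H^0_{p}$ denotes zeroth perverse cohomology on $\sZ$; this uses only the orthogonality $\Hom(D^{\le -1},D^{\ge 0})=0$. Hence if $i_\sZ^*\F\notin D^{p_\sZ,<0}(\sZ)$, taking $B=H^0_{p}(i_\sZ^*\F)\neq 0$ produces a nonzero element of $\Hom(i_\sZ^*\F,B)=\Hom(\F,i_{\sZ*}B)$, and $i_{\sZ*}B$ lies in $\cP_\sX$ with support in $\bfZ$, contradicting (iii). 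This yields (ii), and hence (i), with no truncation of the target at all.
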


\begin{proof}  We prove only \eqref{orto:a}; the proof of \eqref{orto:b} is completely analogous.  
The equivalence between \eqref{orto:i} and \eqref{orto:ii} follows from Lemma~\ref{equiv}.
%!! Might want to be more specific here.
Let us show \eqref{orto:ii}$\iff$\eqref{orto:iii}. 

Let $\sZ$ be a closed substack with $\sZ\tp\subset\bfZ$. Since $\F\in\cP_\sX$,  $$i_{\sZ}^*(\F)\in D^{p_{\sZ},\leq 0}(\sZ).$$
On the other hand, given any triangulated category $D$ with a $t$-structure and an object $A\in
D^{\leq 0}$, it is clear that $A\in D^{<0}$ if and only if $\Hom(A,B)=0$ for all $B$ in the core of the
 $t$-structure. Therefore, $i_{\sZ}^*(\F)\in D^{p_{\sZ},<0}(\sZ)$ if and only if
 $$\Hom(\F, i_{\sZ*}(\G))=\Hom(i_{\sZ}^*(\F),\G)=0\quad\text{for all }\G\in\cP_\sZ.$$
 Now the statement follows from Lemma~\ref{closedimbed}.
 \end{proof}

Suppose now that $j:\sU\imbed\sX$ is an open substack, and consider the 
perversities $p^\pm=p^\pm_{(\sX-\sU)}$. Define a full subcategory
$\cP_{!*}(\sU)\subset \cP_{\sX}$ by
  $$\cP_{!*}(\sU)= D^{p^-,\leq 0}(\sX) \cap  D^{p^+,\geq 0}(\sX).$$

\begin{Thm}\label{GM}
 Suppose that $p(x')>p(x )$,
  $\pbar (x ')>\pbar (x )$ for any $x \in \sU\tp$, 
$x '\in\overline{\{x\}}-\sU\tp$. 
Then $j^*$ induces an equivalence between $\cP_{!*}(\sU)$ and $\cP_\sU$.

The inverse equivalence is denoted by $$j_{!*}:\cP_\sU\to \cP_{!*}(\sU)\subset \cP_{\overline\sU},$$ 
and is called the functor of \emph{minimal} (or \emph{Goresky-MacPherson},
or \emph{IC}) extension.
\end{Thm}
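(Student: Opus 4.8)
The plan is to establish Theorem~\ref{GM} by combining the orthogonality characterizations from Lemma~\ref{orto} with a uniqueness argument, very much in the spirit of \cite[Section~2.1]{BBD}. First I would observe that the defining conditions for $\cP_{!*}(\sU)$ make sense as stated: by Lemma~\ref{orto}\eqref{orto:a}, an object $\F\in\cP_\sX$ lies in $D^{p^-,\le 0}(\sX)$ iff $i_\sZ^*(\F)\in D^{p_\sZ,<0}(\sZ)$ for every closed substack $\sZ$ supported on $\bfZ=\sX\tp-\sU\tp$, and dually via Lemma~\ref{orto}\eqref{orto:b} for the $D^{p^+,\ge 0}$ condition. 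Hence $\cP_{!*}(\sU)$ consists exactly of those perverse sheaves $\F$ with no sub- or quotient object supported on $\sX-\sU$. The hypothesis $p(x')>p(x)$ and $\pbar(x')>\pbar(x)$ for $x'\in\overline{\{x\}}-\sU\tp$ is what makes the auxiliary perversities $p^-$ (resp. $p^+$) still monotone (resp. comonotone), so that $D^{p^-,\le 0}(\sX)$ and $D^{p^+,\ge 0}(\sX)$ are genuine truncation classes one can work with.

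The core of the argument is to prove that $j^*:\cP_{!*}(\sU)\to\cP_\sU$ is fully faithful and essentially surjective. For full faithfulness, given $\F_1,\F_2\in\cP_{!*}(\sU)$, I would analyze the cone of $j^*$ on $\Hom$-groups using the localization description of $D^b_{coh}(\sU)$ (the Remark after Lemma~\ref{exte}): the obstruction to $\Hom(\F_1,\F_2)\iso\Hom(j^*\F_1,j^*\F_2)$ lives in $\Hom$-groups of objects supported on $\sX-\sU$. Concretely, using the recollement-type triangle $i_{\sZ*}i_\sZ^!\to\mathrm{id}\to j_*j^*$ and its dual, one reduces to showing $\Hom(\F_1,i_{\sZ*}\G)=\Hom(i_{\sZ*}\G,\F_2)=0$ for $\G\in\cP_\sZ$, which is precisely conditions \eqref{orto:iii} of Lemma~\ref{orto}\eqref{orto:a} and \eqref{orto:b}; one also needs the vanishing of $\Hom^{-1}$ between such objects and $\F_i$, which follows from the same orthogonality applied after a shift, together with Proposition~\ref{Hom0}. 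For essential surjectivity, given $\G\in\cP_\sU$, I would take any extension $\Gtil\in D^b_{coh}(\sX)$ (Lemma~\ref{exte}\eqref{exte:a}), truncate it with respect to the $p^-$-perverse $t$-structure to land in $D^{p^-,\le 0}(\sX)$ without changing $j^*$, then truncate the result with respect to the $p^+$-perverse $t$-structure to land in $D^{p^+,\ge 0}(\sX)$; because $p^-\le p\le p^+$ and the two truncations are taken on disjoint "halves," the composite still restricts to $\G$ on $\sU$ and now lies in $D^{p^-,\le 0}(\sX)\cap D^{p^+,\ge 0}(\sX)=\cP_{!*}(\sU)$. One checks $\cP_{!*}(\sU)\subset\cP_\sX$, i.e. that these objects really are perverse, by noting $D^{p^-,\le 0}\subset D^{p,\le 0}$ and $D^{p^+,\ge 0}\subset D^{p,\ge 0}$.

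The main obstacle I anticipate is the full faithfulness, specifically handling the higher $\Ext$ contributions cleanly: one must rule out not only $\Hom$ but also $\Hom^{-1}$ from perverse sheaves supported on $\sX-\sU$ into and out of objects of $\cP_{!*}(\sU)$, and this requires a careful Noetherian induction on closed substacks $\sZ\subset\sX-\sU$ exactly as in the proof of Proposition~\ref{Hom0}, using the strict inequalities in the hypothesis to gain the extra degree of vanishing that a non-strict perversity would not provide. Once full faithfulness is in hand, defining $j_{!*}$ as the inverse functor is formal, and its landing in $\cP_{!*}(\sU)\subset\cP_{\overline\sU}$ is immediate since every object of $\cP_{!*}(\sU)$ is supported on the closure $\overline\sU$ (anything in the complement of $\overline\sU$ can be removed by a further closed-support truncation without affecting $j^*$, or more simply one works from the start inside $\cP_{\overline\sU}$). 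I would also remark that the two truncation functors used in essential surjectivity commute in the relevant sense, which is what guarantees the output is independent of the order and genuinely functorial, giving the asserted equivalence.
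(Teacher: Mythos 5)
Your proposal is correct in substance, and for the construction of the extension it follows the same route as the paper: the paper defines $J_{!*}=\tau^-_{\le 0}\circ\tau^+_{\ge 0}$ on all of $D^b_{coh}(\sX)$ and checks (Lemma~\ref{JGM}\eqref{JGM:a}) that the second truncation does not destroy the condition gained by the first, via the explicit containment $(\tau^-_{>0}\F_1)[-1]\in D^{p^-,\ge 2}(\sX)\subset D^{p^+,\ge 0}(\sX)$, which holds precisely because $p^+\le p^-+2$ pointwise. This is the one spot where your write-up is vaguer than it should be: the phrase ``the two truncations are taken on disjoint halves'' is not an argument, and no commutation of the two truncation functors is needed or used --- what is needed is exactly the inclusion above (or its dual if you truncate in the other order), and you should state it. Where you genuinely diverge from the paper is full faithfulness. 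The paper does not compute $\Hom$-groups at all: it shows that $J_{!*}(f)$ is an isomorphism whenever $f|_\sU$ is (because its kernel and cokernel in the abelian category $\cP_\sX$ are supported on $\sX-\sU$, hence vanish by Lemma~\ref{orto}), and then invokes the localization description of $D^b_{coh}(\sU)$ from Lemma~\ref{exte} to factor $J_{!*}$ through $j^*$; fully faithfulness and essential surjectivity then come for free from $J_{!*}|_{\cP_{!*}(\sU)}=\mathrm{id}$. Your route instead attacks $\Hom(\F_1,\F_2)\to\Hom(j^*\F_1,j^*\F_2)$ directly via the triangle $\ip_*\ip^!\F_2\to\F_2\to j_*j^*\F_2$, Lemma~\ref{lim}, and the vanishing of $\Hom(i_\sZ^*\F_1,i_\sZ^!\F_2[k])$ for $k=0,1$, which indeed follows from $i_\sZ^*\F_1\in D^{p_\sZ,\le -1}(\sZ)$, $i_\sZ^!\F_2\in D^{p_\sZ,\ge 1}(\sZ)$ and Proposition~\ref{Hom0}; this works, and the extra degree of vanishing does come from the strict inequalities exactly as you say, but it costs you a limit argument over thickenings of $\sX-\sU$ that the paper's abelian-category argument avoids entirely. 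Both approaches are valid; the paper's is shorter because Lemma~\ref{orto}\eqref{orto:iii} already packages the orthogonality in the form needed for the kernel/cokernel argument.
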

\begin{proof} By the hypotheses, both $p^-$ and $p^+$ are monotone and comonotone perversities on $\sX\tp$.
Hence they define  $t$-structures on $D^b_{coh}(\sX)$;
let $\tau^-$,
$\tau^+$ be the corresponding truncation functors.

Define a functor $J_{!*}$ from $D^b_{coh}(\sX)$ to itself by
$$J_{!*}=\tau^-_{\leq 0} \circ \tau^+_{\geq 0}.$$
We need the following claim.

\begin{Lem}\label{JGM}
\begin{enumerate}
\item\label{JGM:a} $J_{!*}$ takes values in $\cP_{!*}(\sU)$.
\item\label{JGM:b} If a morphism $f:\F\to \G$ in $D^b_{coh}(\sX)$ is such that 
$f|_\sU$ is an isomorphism, then $J_{!*}(f)$ is an isomorphism.
\end{enumerate}
\end{Lem}

\begin{proof} Fix $\F\in D^b_{coh}(\sX)$. By construction, $J_{!*}(\F)\in D^{p^-,\leq 0}(\sX)$. 
Set $$\F_1=\tau^+_{\geq 0}\F.$$ Then $J_{!*}(\F)$ fits in a distinguished triangle
$$(\tau^-_{>0}\F_1) [-1] \to J_{!*}(\F) \to \F_1\to \tau^-_{>0}\F_1.$$ 
Since $\F_1\in D^{p^+,\geq 0}(\sX)$ and $(\tau^-_{>0}\F_1) [-1]\in D^{p^-, \geq 2}(\sX)
\subset D^{p^+, \geq 0}(\sX)$, part~\eqref{JGM:a} follows.

Let $f$ be as in \eqref{JGM:b}. Then $J_{!*}(f)$ is a morphism in $\cP_{!*}(\sU)$
that is an isomorphism over $\sU$. Therefore, its kernel and cokernel are
objects of $\cP_{\sX}$ supported by $\sX-\sU$.
But by Lemma~\ref{orto}, $J_{!*}(\F)$ and $J_{!*}(\G)$ have neither subobjects 
nor quotients supported by $\sX -\sU$.
\end{proof}

We now finish the proof of Theorem~\ref{GM}.
By Lemmas~\ref{exte} and \ref{JGM}, the functor
$J_{!*}$ decomposes as 
$$ D^b_{coh}(\sX)\overset{j^*}\to D^b_{coh}(\sU)\to\cP_{!*}(\sU)$$
for a canonically defined functor 
$\jtil_{!*}:D^b_{coh}(\sU)\to \cP_{!*}(\sU)$.
Set $j_{!*}=\jtil_{!*}|_{\cP_\sU}$. By construction, $j^*\circ j_{!*} =id_{\cP_\sU}$
canonically. Also $j_{!*}\circ j^*|_{\cP_{!*}(\sU)} =id$ canonically, because
$J_{!*}|_{\cP_{!*}(\sU)} =id$. Thus $j^*$ and $j_{!*}$ are  inverse
equivalences between $\cP_{!*}(U)$ and $\cP_U$. 
\end{proof}

\begin{Rem*} Note that the hypotheses of Theorem~\ref{GM} are always satisfied if $p$ is strictly
monotone and strictly comonotone.
\end{Rem*} 

We need some elementary properties of the minimal extension. Let us keep the hypotheses of Theorem~\ref{GM}.

\begin{Lem} For any $\F\in\cP_\sX$, $j_{!*}(\F|_\sU)$ is a subquotient of $\F$ in the abelian category $\cP_\sX$.
\end{Lem}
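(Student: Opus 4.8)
I want to show that for $\F\in\cP_\sX$, the minimal extension $j_{!*}(\F|_\sU)$ appears as a subquotient of $\F$ in the abelian category $\cP_\sX$. The idea is to realize $j_{!*}(\F|_\sU)$ as the image (in the abelian category $\cP_\sX$) of a natural morphism between two canonical modifications of $\F$ that agree with $\F$ over $\sU$. Concretely, write $p^\pm$ for the perversities $p^\pm_{(\sX-\sU)}$ as in the discussion before Theorem~\ref{GM}, and let $\tau^\pm$ denote the truncation functors for the corresponding $t$-structures on $D^b_{coh}(\sX)$. Applying the perverse truncation $\tau^p_{\ge 0}\tau^p_{\le 0}$ we may assume $\F\in\cP_\sX$ to begin with, and then the natural maps give a diagram
$$\tau^+_{\ge 0}\F \longrightarrow \F \longrightarrow \tau^-_{\le 0}\F.$$
I claim the composite $\tau^+_{\ge 0}\F\to\tau^-_{\le 0}\F$ is, after applying $J_{!*}=\tau^-_{\le 0}\circ\tau^+_{\ge 0}$, precisely a map whose image computes $j_{!*}(\F|_\sU)$.

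\textbf{Key steps.} First I would check that all four objects $\tau^+_{\ge 0}\F$, $\F$, $\tau^-_{\le 0}\F$, and $J_{!*}(\F)=\tau^-_{\le 0}\tau^+_{\ge 0}\F$ lie in $\cP_\sX$. For $\F\in\cP_\sX$ we have $\F\in D^{p,\le 0}\cap D^{p,\ge 0}$; since $p^+\ge p\ge p^-$ pointwise, $\F\in D^{p^+,\le 0}$ and $\F\in D^{p^-,\ge 0}$, so $\tau^+_{\ge 0}\F\in D^{p^+,\ge 0}\cap D^{p^+,\le 0}=\cP_{\sX,p^+}$, but one checks using $\F\in D^{p,\le 0}\subset D^{p^+,\le 0}$... — more carefully, since $\F\in D^{p,\ge 0}\cap D^{p,\le 0}$, the cone of $\tau^+_{\ge 0}\F\to\F$ lies in $D^{p^+,<0}\subset D^{p,\le 0}$ and in $D^{p,\ge 0}$ (as it is a quotient-type piece), giving $\tau^+_{\ge 0}\F\in\cP_\sX$; similarly for $\tau^-_{\le 0}\F$ and for $J_{!*}(\F)$, the last by Lemma~\ref{JGM}\eqref{JGM:a}. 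Restricting to $\sU$, all of these agree with $\F|_\sU$ since $p^\pm=p$ on $\sU\tp$, so the truncations are trivial there; in particular $J_{!*}(\F)|_\sU\cong\F|_\sU$ and by Lemma~\ref{JGM}\eqref{JGM:b} $J_{!*}(\F)\cong j_{!*}(\F|_\sU)$.

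\textbf{Producing the subquotient.} Now consider the composite morphism $\psi:\tau^+_{\ge 0}\F\to\tau^-_{\le 0}\F$ in $\cP_\sX$; both source and target are objects of the abelian category, so $\psi$ has a kernel, image, and cokernel in $\cP_\sX$. The kernel of $\tau^+_{\ge 0}\F\to\F$ is supported on $\sX-\sU$ (it is killed by $j^*$), and similarly $\F\to\tau^-_{\le 0}\F$ has cokernel supported on $\sX-\sU$; hence $\ker\psi$ and $\coker\psi$ are supported on $\sX-\sU$, so $\mathrm{Im}\,\psi$ is a subquotient of $\F$ (a quotient of the subobject $\mathrm{Im}(\tau^+_{\ge 0}\F\to\F)\subset\F$). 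It remains to identify $\mathrm{Im}\,\psi$ with $j_{!*}(\F|_\sU)$. For this I would use Lemma~\ref{orto}: $\mathrm{Im}\,\psi$ is a quotient of $\tau^+_{\ge 0}\F$, hence lies in $D^{p^+,\ge 0}(\sX)$; and it is a subobject of $\tau^-_{\le 0}\F$, hence lies in $D^{p^-,\le 0}(\sX)$; therefore $\mathrm{Im}\,\psi\in\cP_{!*}(\sU)$, and since its restriction to $\sU$ is $\F|_\sU$ (as $\psi|_\sU$ is an isomorphism), Theorem~\ref{GM} forces $\mathrm{Im}\,\psi\cong j_{!*}(\F|_\sU)$.

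\textbf{Main obstacle.} The delicate point is the bookkeeping in the first step: verifying carefully that $\tau^+_{\ge 0}\F$ and $\tau^-_{\le 0}\F$ genuinely land in $\cP_\sX$ (not merely in $\cP_{\sX,p^\pm}$), which rests on the pointwise inequalities $p^-\le p\le p^+$ together with $\F$ being perverse for $p$; and then confirming that the image of $\psi$, formed in $\cP_\sX$, is the same as the image formed in $\cP_{\sX,p^+}$ or $\cP_{\sX,p^-}$ — i.e. that these abelian-category structures are compatible enough for the quotient/subobject identifications above to be legitimate. Once that compatibility is pinned down, the identification of $\mathrm{Im}\,\psi$ via Lemma~\ref{orto} and Theorem~\ref{GM} is routine.
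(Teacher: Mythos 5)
Your strategy --- realizing $j_{!*}(\F|_\sU)$ as the image of a natural map between two perverse modifications of $\F$, in the spirit of the classical ``image of $j_!\to j_*$'' description --- is workable, but as written the central morphism does not exist: the adjunction maps for a truncation go $\tau_{\le 0}\to\mathrm{id}$ and $\mathrm{id}\to\tau_{\ge 0}$, so the natural arrows are $\tau^-_{\le 0}\F\to\F$ and $\F\to\tau^+_{\ge 0}\F$, the reverse of both arrows in your diagram. There is in general no map $\tau^+_{\ge 0}\F\to\F$ or $\F\to\tau^-_{\le 0}\F$, so your $\psi$ is undefined. The fix is to take $\psi$ to be the composite $\tau^-_{\le 0}\F\to\F\to\tau^+_{\ge 0}\F$; after this correction your subsequent claims transfer with the roles of ``subobject'' and ``quotient'' exchanged ($\mathrm{Im}\,\psi$ is a quotient of the subobject $\tau^-_{\le 0}\F\hookrightarrow\F$, hence a subquotient; it lies in $D^{p^-,\le 0}$ as a quotient of $\tau^-_{\le 0}\F$ and in $D^{p^+,\ge 0}$ as a subobject of $\tau^+_{\ge 0}\F$, using $p-1\le p^-\le p\le p^+\le p+1$; and it restricts to $\F|_\sU$, so Theorem~\ref{GM} identifies it with $j_{!*}(\F|_\sU)$). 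Your worry about comparing images formed in different hearts is not the real issue --- everything can be done inside $\cP_\sX$, with the inequalities on $p^\pm$ only used to check membership in the classes $D^{p^\pm,\lessgtr 0}$.

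For comparison, the paper's proof is shorter and avoids forming images altogether. It observes that $\tau^+_{\ge 0}\F\in D^{p^+,\ge 0}(\sX)\subset D^{p,\ge 0}(\sX)$ and $\tau^+_{<0}\F\in D^{p^+,<0}(\sX)\subset D^{p,\le 0}(\sX)$, so the truncation triangle is a short exact sequence in $\cP_\sX$ exhibiting $\tau^+_{\ge 0}\F$ as a quotient of $\F$; the dual argument exhibits $J_{!*}(\F)=\tau^-_{\le 0}\tau^+_{\ge 0}\F$ as a subobject of that quotient, and $J_{!*}(\F)\cong j_{!*}(\F|_\sU)$ by Lemma~\ref{JGM}. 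This bypasses the identification of an image via Lemma~\ref{orto} and Theorem~\ref{GM} that your route requires. (Incidentally, the two answers agree: one can check that $\tau^-_{\le 0}\tau^+_{\ge 0}\F$ is exactly the image of the corrected $\psi$.)
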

\begin{proof} Since $\F\in\cP_\sX$, we have
\begin{align*}
\tau^+_{\ge 0}\F&\in D^{p^+,\ge 0}(\sX)\subset D^{p,\ge 0}(\sX)\\
\tau^+_{<0}\F&\in D^{p^+,<0}(\sX)\subset D^{p,\le 0}(\sX).
\end{align*}
Now the distinguished triangle
$$\tau^+_{<0}\F\to\F\to\tau^+_{\ge0}\F\to\tau^+_{<0}\F[1]$$
implies that $\tau^+_{\ge 0}(\F),\tau^+_{<0}(\F)\in\cP_\sX$. In other words, $\tau^+_{\ge 0}(\F)$ is a quotient of $\F$.

Repeating the argument, we see that $$J_{!*}(\F)=\tau^-_{\le 0}\tau^+_{\ge 0}\F=j_{!*}(\F|_\sU)\in\cP_\sX$$
is a subobject of $\tau^+_{\ge 0}(\F)$, as required.
\end{proof}

\begin{Lem} \label{closedIC} Let $\overline i:\sY\hookrightarrow\sX$ be a closed substack. Set $\sV=\sU\cap\sY$. Then
$$j_{!*}\circ i_*\simeq \overline i_*\circ j'_{!*}.$$
Here $i$ is the closed embedding $\sV\hookrightarrow\sU$, and $j'$ is the open embedding $\sV\hookrightarrow\sY$.
\end{Lem}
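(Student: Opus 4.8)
The statement asserts compatibility of the IC extension functor with pushforward along a closed embedding. Since both sides are functors from $\cP_\sV$ to $\cP_{\overline\sU}$ (where $\overline\sU$ is the closure of $\sU$ in $\sX$, but we may as well work with all of $\sX$), and since on $\sU$ the functor $j^*$ is an equivalence onto $\cP_\sU$ by Theorem~\ref{GM}, the cleanest route is to exhibit a natural isomorphism after restricting to $\sU$ and then invoke the characterization of $j_{!*}$. More precisely: the object $\overline i_*(j'_{!*}(\G))$ for $\G\in\cP_\sV$ restricts over $\sU$ to $i_*(\G)$ (since $\overline i_*$ commutes with the open restriction $j^*$, which is simply $i'_*$ where $i':\sV\hookrightarrow\sU$), so the whole content is to show that $\overline i_*(j'_{!*}(\G))\in\cP_{!*}(\sU)$, i.e.\ that it lies in $D^{p^-,\le 0}(\sX)\cap D^{p^+,\ge 0}(\sX)$ for the auxiliary perversities $p^\pm=p^\pm_{(\sX-\sU)}$. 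Once that is established, Theorem~\ref{GM} forces $\overline i_*(j'_{!*}(\G))\cong j_{!*}(i_*(\G))$ canonically, and this identification is functorial in $\G$, giving the isomorphism of functors.

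**Key steps.** First, I would reduce to checking the two membership conditions $\overline i_*(j'_{!*}\G)\in D^{p^-,\le0}(\sX)$ and $\in D^{p^+,\ge0}(\sX)$. For the first: by Lemma~\ref{orto}\eqref{orto:a} applied on $\sX$ with $\bfZ=\sX-\sU$, it suffices to verify $i_\sZ^*(\overline i_*(j'_{!*}\G))\in D^{p_\sZ,<0}(\sZ)$ for every closed substack $\sZ\subset\sX$ with $\sZ\tp\subset(\sX-\sU)\tp$. Here I split $\sZ$ according to $\sY$: the part of $\sZ$ not contained in $\sY$ contributes nothing after $\overline i_*$ (the $*$-restriction to a closed substack disjoint from $\sY$ of something pushed from $\sY$ vanishes, at least on the relevant open piece), while for $\sZ\subset\sY$ we use base change $i_\sZ^*\overline i_*=(\overline i|_\sZ)_*(\text{restriction})$ — more carefully, since $\overline i$ is a closed embedding, $i_\sZ^*\overline i_* \cong i_{\sZ\cap\sY *}\,i_{\sZ\cap\sY/\sY}^*$ up to the usual Tor issues, and then the condition $j'_{!*}\G\in D^{p'^-,\le0}(\sY)$ (which holds by Theorem~\ref{GM} applied to $j'$, noting that the hypotheses of Theorem~\ref{GM} for $j'$ follow from those for $j$ by Lemma~\ref{predvar}\eqref{predvar:b}, since $p_\sY$ restricted appropriately inherits the strict inequalities) gives exactly what we need, because $p^-_{(\sX-\sU)}$ restricted to $\sY$ agrees with $(p_\sY)^-_{(\sY-\sV)}$. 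The second condition, $\overline i_*(j'_{!*}\G)\in D^{p^+,\ge0}(\sX)$, is handled dually via Lemma~\ref{orto}\eqref{orto:b} and $i_\sZ^!\overline i_*$, using $j'_{!*}\G\in D^{p'^+,\ge0}(\sY)$; note $\overline i_*$ is exact and $i_\sZ^!\overline i_* = i_{\sZ\cap\sY}^!$ for $\sZ\subset\sY$ in the obvious sense, while for $\sZ$ transverse to $\sY$ one again gets vanishing.

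**Main obstacle.** The routine-looking but genuinely delicate point is the base-change bookkeeping for $i_\sZ^*\overline i_*$ and $i_\sZ^!\overline i_*$ when $\sZ$ and $\sY$ are not in general position inside $\sX$ — the naive identity $i_\sZ^*\overline i_*\G\cong i_{\sZ\cap\sY}^*\G$ fails because of higher Tor sheaves supported on the intersection, so I must argue only at the \emph{generic points} of $\sZ$ (which is all Lemma~\ref{equiv}, hence Lemma~\ref{orto}, actually requires): over a generic point $x$ of an irreducible component of $\sZ$ lying in $\sY$, shrinking to a neighborhood kills the bad Tor terms and the scheme-theoretic intersection $\sZ\cap\sY$ becomes flat, so the pointwise statement $\ip_x^*\overline i_*\G\cong \ip_x^*\G$ (for $x\in\sY\tp$) holds on the nose and $\ip_x^*\overline i_*\G$ vanishes for $x\notin\sY\tp$. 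A secondary, purely notational nuisance is matching up $j_{!*}$ valued in $\cP_{!*}(\sU)\subset\cP_{\overline\sU}$ versus a functor landing in $\cP_\sX$ — but since $\overline\sU$ is a closed substack and everything in sight is supported on $\overline{\sU\cap\sY}\subset\overline\sU$, the ambient stack can be taken to be $\sX$ throughout without harm. Granting these, the naturality of the resulting isomorphism in $\G$ is immediate from the functoriality built into Theorem~\ref{GM}.
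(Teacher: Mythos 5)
Your overall strategy --- restrict to $\sU$, show that $\overline i_*(j'_{!*}\G)$ lies in $\cP_{!*}(\sU)$, and then invoke the equivalence of Theorem~\ref{GM} --- is exactly the paper's, and your conclusion is reached correctly. Where you diverge is in the verification of the containment $\overline i_*(\cP_{!*}(\sV))\subset\cP_{!*}(\sU)$: you route it through Lemma~\ref{orto} and a case analysis over closed substacks $\sZ$ with $\sZ\tp\subset(\sX-\sU)\tp$, which forces you to confront base change for $i_\sZ^*\overline i_*$ and the attendant Tor terms (and also tacitly requires knowing $\overline i_*(j'_{!*}\G)\in\cP_\sX$ before Lemma~\ref{orto} is even applicable --- a point you do not address). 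The paper's proof is a single line: the containment is an instance of Lemma~\ref{predvar}\eqref{predvar:c} applied to the auxiliary perversities $p^\pm_{(\sX-\sU)}$, whose restrictions to $\sY$ are precisely $(p_{\sY})^\pm_{(\sY-\sV)}$ (a matching you do correctly note). The point is that membership in $D^{p,\le0}$ and $D^{p,\ge0}$ is a pointwise condition phrased via the topological stalk functors $\ip_x^*$ and $\ip_x^!$, which see a pushforward along a closed embedding transparently: the stalk at $x\in\sY\tp$ is the same module up to restriction of scalars, which does not change cohomological amplitude, and the stalk vanishes for $x\notin\sY\tp$. No scheme-theoretic restriction $i_\sZ^*$, and hence no Tor bookkeeping, ever enters. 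Your ``main obstacle'' is therefore an artifact of the chosen route rather than an intrinsic difficulty; your resolution of it (arguing only at generic points, where by the Nakayama argument of Lemma~\ref{equiv} only the extreme cohomological degree of the stalk matters) is sound and in effect re-derives Lemma~\ref{predvar}\eqref{predvar:c}, so the proof goes through, just less economically.
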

\begin{proof} Indeed, $\overline i_*(\cP_{!*}(\sV))\subset\cP_{!*}(\sU)$ by
Lemma~\ref{predvar}\eqref{predvar:c}. 
\end{proof}

\begin{Def} Let $j:\sU\hookrightarrow\sX$ be a locally closed embedding that satisfies the hypotheses
of Theorem~\ref{GM}:  
$$p(x')>p(x )\quad\text{and}\quad \pbar (x ')>\pbar (x )\quad (x \in \sU\tp,x '\in\overline{\{x\}}-\sU\tp).$$ The \emph{minimal (or Goresky-MacPherson, or IC) extension
functor} $$j_{!*}:\cP_\sU\to\cP_\sX$$
is given by $i_*\circ j'_{!*}$, where $i:\sZ\hookrightarrow\sX$ is a closed substack such that $j':\sU\hookrightarrow\sZ$ is an open
embedding. By Lemma~\ref{closedIC}, the functor is independent of the choice of $\sZ$.
\end{Def}

\subsection{Irreducible perverse sheaves}
For the rest of the paper, we assume that $\sX$ is a semi-separated
stack of finite type over a field $\kk$. 

Let $x$ be a point of $\sX$. Let us represent it by a morphism $\xi:\spec K\to\sX$ for a field extension
$K\supset\kk$. Denote by $G_\xi$ the automorphism group of $\xi$; it is a $K$-group scheme of finite type.  
Note that $\dim(G_\xi)$ depends only on $x\in\sX\tp$, and that it is an upper semi-continuous function of
$x$ (because $G_\xi$ can be thought of as the fiber of the diagonal morphism $\sX\to\sX\times\sX$ over
$(\xi,\xi)$). 

\begin{Ex} Suppose $\sX=X/G$, where $X$ is a semi-separated scheme of finite type over $\kk$ and 
$G$ is an affine group scheme of finite type acting on $X$. 
A point $\xi:\spec K\to X$ defines a point of $\sX$; the corresponding automorphism group is 
the stabilizer of $\xi$. 
\end{Ex}

Denote by $\sX\tpl\subset\sX\tp$ the set of points $x\in\sX\tp$ that are defined over the
algebraic closure $\kbar\supset\kk$.
For any presentation $\pi:X\to\sX$, $x\in\sX\tp$ is defined over
$\kbar$ if and only if it can be lifted to a $\kbar$-point of $X$, because $\pi$ has finite type. 

\begin{Lem} For $x\in\sX\tp$, the set $\{x\}$ is locally closed if and only if $x\in\sX\tpl$.
\label{locallyclosed}
\end{Lem}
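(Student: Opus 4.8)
The plan is to prove both implications separately, using the presentation $\pi:X\to\sX$ to transfer the question to the (already understood) case of schemes of finite type over $\kk$, and to exploit the group scheme $G_\xi$ attached to $x$.

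\medskip

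\textit{Plan of proof.} Fix $x\in\sX\tp$ and a presentation $\pi:X\to\sX$. First I would record the scheme-theoretic fact on which everything rests: since $X$ is of finite type over $\kk$, a point $y\in X\tp$ has $\{y\}$ locally closed if and only if $y$ is a closed point of some open subscheme, which happens if and only if the residue field of $y$ is algebraic over $\kk$, i.e. $y\in X\tpl$ (this is the Nullstellensatz together with the fact that a finitely generated field extension that is algebraic is finite). So the statement is true on $X$, and the content is to descend it along $\pi$.

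For the direction ``$x\in\sX\tpl\Rightarrow\{x\}$ locally closed'': if $x$ is defined over $\kbar$, then as noted in the text it lifts to a $\kbar$-point $y\in X(\kbar)$, so $\{y\}$ is a closed point of some open $X^0\subset X$; shrinking, I may take $X^0=\pi^{-1}(\sX^0)$ for an open substack $\sX^0\owns x$ (the fiber $\pi^{-1}(x)$ is a finite-type $\kbar$-scheme, all of whose points are defined over $\kbar$, hence closed in $X^0$ after further shrinking, because the closure of any non-closed point would meet $X^0$ in a point not of finite residue degree). Replacing $\sX$ by $\sX^0$, I must show $\{x\}$ is closed. Equivalently, I must show its preimage $\pi^{-1}(\overline{\{x\}}-\{x\})=\emptyset$; but that preimage is a $\pi$-saturated closed subset of $X$ consisting of points whose image is a proper specialization of $x$, and since every point of the fibre $\pi^{-1}(x)$ is closed in $X$, no point of $X$ can specialize into $\pi^{-1}(x)$ nontrivially while mapping to a proper specialization of $x$ --- more cleanly, $\pi(\pi^{-1}(\overline{\{x\}}))=\overline{\{x\}}$ and the closed points of $\pi^{-1}(\overline{\{x\}})$, being dense, all lie over $x$, forcing $\overline{\{x\}}=\{x\}$.

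For the converse ``$\{x\}$ locally closed $\Rightarrow x\in\sX\tpl$'': shrink $\sX$ to an open substack in which $\{x\}$ is closed, so $\sZ=\overline{\{x\}}$ with reduced structure is a closed substack with a single point; then $\pi^{-1}(\sZ)\to\sZ$ is a faithfully flat finite-type morphism, $\pi^{-1}(\sZ)$ is a nonempty finite-type $\kk$-scheme, hence has a closed point $y$, necessarily with $\{y\}$ locally closed, hence $y\in X\tpl$ by the scheme case; but $\pi(y)=x$, so $x$ is the image of a $\kbar$-point and therefore $x\in\sX\tpl$.

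\medskip

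\textit{Main obstacle.} The delicate point is the forward direction: from ``$x$ defined over $\kbar$'' I must genuinely produce a \emph{closed} (not merely locally closed) orbit-like behaviour after shrinking, i.e. control the whole fibre $\pi^{-1}(\overline{\{x\}})$ and not just one lift $y$. The argument will hinge on the interplay between $\pi$ being open (flat of finite type) and closed points being dense in finite-type $\kk$-schemes; I expect the careful bookkeeping of ``shrink $\sX$, pull back, shrink again so that the open is $\pi$-saturated'' to be the part that needs the most care, whereas the converse is essentially immediate once one invokes the scheme case plus surjectivity of $\pi$.
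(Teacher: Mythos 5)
Your second implication ($\{x\}$ locally closed $\Rightarrow x\in\sX\tpl$) is correct and essentially coincides with the paper's argument: a presentation of the locally closed substack with underlying set $\{x\}$ is a nonempty finite-type $\kk$-scheme, hence has $\kbar$-points, and these map to $x$.

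The forward implication, however, has a genuine gap, and it sits exactly where you flagged the ``main obstacle.'' The parenthetical meant to produce $\sX^0$ is false: when the automorphism group $G_\xi$ has positive dimension, the fiber $\pi^{-1}(x)\subset X\tp$ is positive-dimensional and therefore contains non-closed points of $X$; its points are certainly not all defined over $\kbar$, and no open substack $\sX^0$ is ever actually constructed. Next, the closedness of $\pi^{-1}(\overline{\{x\}}-\{x\})$ is \emph{equivalent} to $\{x\}$ being locally closed, so invoking it is circular. Finally, the ``more cleanly'' claim that all closed points of $\pi^{-1}(\overline{\{x\}})$ lie over $x$ fails already for $\sX=\mathbb{A}^1/\mathbb{G}_m$ with $x$ the open point: the origin is a closed point of $\pi^{-1}(\overline{\{x\}})=\mathbb{A}^1$ lying over the other point of $\sX$, and there $\{x\}$ is locally closed but \emph{not} closed, so any argument concluding $\overline{\{x\}}=\{x\}$ cannot be right. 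The underlying issue is that closedness of one lift $y\in X(\kbar)$ says nothing about the rest of the fiber $\pi^{-1}(x)$, which is what actually governs closedness of $\{x\}$ downstairs.

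The paper closes this gap in either of two ways, both of which avoid your saturation bookkeeping. One: restrict to $\overline{\{x\}}$ and then to the open locus where $\dim(G_\eta)$ is constant (using the upper semi-continuity of $\dim(G_\eta)$ recorded just before the lemma); on that locus every point has dimension at least $-\dim(G_\xi)$, while $x$, being defined over $\kbar$, has dimension exactly $-\dim(G_\xi)$, hence is of minimal dimension and therefore closed. Two, more simply: $\{x\}$ is the image of a morphism $\spec K\to\sX$ with $K/\kk$ finite, which is of finite type, so $\{x\}$ is constructible by Chevalley's theorem, and a constructible singleton is locally closed.
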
 
\begin{proof}
Suppose $\{x\}$ is locally closed. Then there is a locally closed substack $\sY\subset\sX$ such that 
$\sY\tp=\{x\}$. However, $\sY\tpl\ne\emptyset$ 
(because its presentation has $\kbar$-points), and therefore $x$ is defined over $\kbar$.

Suppose $x$ is defined over $\kbar$. Passing to a locally closed subset of $\sX$, we may assume that 
$\dim(G_\eta)$ is constant for all $\eta:\spec K\to\sX$. But then $x\in\sX\tp$ has minimal dimension (equal to
$-\dim(G_\eta)$), and therefore 
$\{x\}\subset\sX\tp$ is closed. (For another proof, note that $\{x\}\subset\sX\tp$ is the image of a morphism
$\spec K\to\sX$, where $K\supset\kk$ is a finite extension. This morphism is of finite type, therefore $\{x\}$ is a constructible set. This implies $\{x\}$ is locally closed.)
\end{proof}

\begin{Lem} Suppose $x\in\sX\tp-\sX\tpl$. Then there is a point $x'\in\overline{\{x\}}$ of dimension $\dim(x')=\dim(x)-1$.
\label{codim1}
\end{Lem}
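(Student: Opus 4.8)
The statement to prove is: if $x\in\sX\tp-\sX\tpl$, i.e. $x$ is \emph{not} defined over $\kbar$, then $\overline{\{x\}}$ contains a point $x'$ with $\dim(x')=\dim(x)-1$. The plan is to reduce to a statement about an integral scheme by choosing a presentation, and then to use the classical fact that in a finite-type integral scheme over $\kk$, the dimension drops by exactly one when passing from the generic point to a generic point of an irreducible divisor, together with a careful bookkeeping of the correction term $\codim(y/x)$ in the definition of $\dim$ on the stack.

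First I would pick a presentation $\pi:X\to\sX$ and a point $y\in\pi^{-1}(x)\subset X\tp$. Shrinking $X$ (to a suitable open, and then to the closure of $y$ with reduced structure) we may assume $X$ is an integral scheme of finite type over $\kk$ with generic point $y$, that $\overline{\{y\}}=X$ maps into $\overline{\{x\}}$, and that the fiber dimension $\codim(y/x)=\dim(\pi^{-1}(x))$ is locally constant in a neighborhood of the generic point (this uses upper semicontinuity of fiber dimension for the finite-type morphism $\pi$ and the fact, used in the proof of Lemma~\ref{locallyclosed}, that we may stratify so that $\dim G_\eta$ is controlled). Recall from the construction preceding Definition~\ref{deft} that $\dim(x)=\dim(y)-\codim(y/x)$, where $\dim(y)$ on the scheme $X$ is the one coming from the pulled-back dualizing complex; by Example~\ref{finitetypescheme} we may take it to be the Krull dimension of $\overline{\{y\}}=X$, i.e. $\dim(y)=\dim X$.

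Next comes the key point. Since $x\notin\sX\tpl$, the residue field of $y$ over $\kk$ is not algebraic; hence the map $\spec\kappa(y)\to\spec\kappa(x)$ factors through something positive-dimensional, and in fact (after our shrinking, where the fiber $\pi^{-1}(x)$ is irreducible) $\dim X=\dim\pi^{-1}(x)+\text{(base part)}$ with $\dim\pi^{-1}(x)=\codim(y/x)\ge 1$ — this is exactly the obstruction to $\{x\}$ being locally closed, cf. Lemma~\ref{locallyclosed}. Because $\pi^{-1}(x)$ is an integral scheme of positive dimension and of finite type over a field, it contains a point $z$ of codimension $1$ in it; equivalently, it has an irreducible divisor. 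I would then let $x'$ be the image of (the generic point of) that divisor under $\pi$, checking that it still lies in $\overline{\{x\}}$ and that $\codim(y/x')=\codim(y/x)-1$ while $\dim(y)$ is unchanged — so $\dim(x')=\dim(y)-\codim(y/x')=\dim(x)+1$? No: one must instead track the \emph{other} direction, replacing $y$ by a generic point $y'$ of a divisor in $X$ lying over $x$, or dually move within the fiber; the arithmetic of which of $\dim(y)$ and $\codim(y/x)$ changes is where care is needed.

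\textbf{Main obstacle.} The genuine difficulty is the bookkeeping of the two competing contributions $\dim(y)$ and $\codim(y/x)$ to $\dim(x)=\dim(y)-\codim(y/x)$: one must produce a specialization $x\rightsquigarrow x'$ in $\sX\tp$ that decreases $\dim$ by exactly $1$, and there are two mechanisms — moving down in the base (decreasing $\dim(y)$ by $1$ via a divisor in $X$) versus moving within the fiber of $\pi$ over $x$ (which changes $\codim(y/x)$) — and one has to argue that when $x$ is not a $\kbar$-point the fiber direction is available, i.e. $\codim(y/x)\ge 1$, and that specializing \emph{within the fiber} $\pi^{-1}(x)$ to a codimension-one point genuinely realizes a point $x'$ of $\sX$ with $x'\in\overline{\{x\}}$ and $\dim(x')=\dim(x)-1$, using that the dualizing complex on $X$ is pulled back from $\sX$ so that the codimension function is compatible. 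Verifying the independence of these choices and the exact value of the dimension shift — especially that the residual-field transcendence degree is what provides the slack — is the crux; everything else is a routine reduction to integral finite-type schemes over $\kk$.
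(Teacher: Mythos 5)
There is a genuine gap: your plan never actually produces the point $x'$, and both mechanisms you sketch for producing it would fail. Specializing ``within the fiber $\pi^{-1}(x)$'' to a codimension-one point, or passing to the generic point of a divisor of $X$ ``lying over $x$'', yields a point of $X$ that still maps to $x$, hence gives back $x$ itself rather than a point of $\overline{\{x\}}-\{x\}$. You flag the resulting arithmetic as ``the crux'' but leave it unresolved; in addition, the formula you quote has the wrong sign (the paper sets $d(x)=d(y)-\codim(y/x)$ and $\dim=-d$, so $\dim(x)=\dim(y)+\codim(y/x)$), and the claim that $\codim(y/x)\ge 1$, or that $\kappa(y)$ is transcendental over $\kk$, is ``exactly the obstruction'' to $x\in\sX\tpl$ is false for an arbitrary choice of $y$: for the open orbit of $\mathbb{A}^1/\mathbb{G}_m$ the fiber is $\mathbb{G}_m$, whose generic point has transcendental residue field, yet that point of the stack is defined over $\kbar$.

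The missing idea is the choice of $y$: take $y\in\pi^{-1}(x)$ \emph{closed in the induced topology on the fiber}, and set $Y=\overline{\{y\}}\subset X$. This is where the hypothesis enters: if $\dim Y=0$, then $y$ is a closed point of the finite-type scheme $X$, so $\kappa(y)$ is finite over $\kk$ and $x$ would be defined over $\kbar$; hence $\dim Y>0$. Moreover $Y\cap\pi^{-1}(x)=\{y\}$, so \emph{every} codimension-one point $y'$ of $Y$ maps to some $x'\in\overline{\{x\}}$ with $x'\ne x$, whence $\dim(x')<\dim(x)$. After shrinking $Y$ to an open subset on which the fibers of $\pi|_Y$ have constant dimension $\dim Y-\dim x$, the estimate
$$\dim x'\ge\dim\bigl(\pi^{-1}(\overline{\{x'\}})\cap Y\bigr)-(\dim Y-\dim x)\ge(\dim Y-1)-(\dim Y-\dim x)=\dim x-1$$
closes the argument. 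Your reduction to integral finite-type schemes and your instinct that a fiber-dimension count is what is needed are sound, but without this choice of $y$ the specialization you construct does not leave the fiber over $x$.
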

\begin{proof} For a presentation $\pi:X\to\sX$, consider preimage $\pi^{-1}(x)\subset X\tp$. Choose a point $y\in\pi^{-1}(x)$ that is closed in
the induced topology, and set $Y=\overline{\{y\}}\subset X$. Note that $\dim Y>0$, 
otherwise $x$ would be defined over $\kbar$. 

The restriction $\pi|_Y:Y\to\sX$ is of finite type; therefore, we can replace $Y$ with its open subset such that 
the dimension of fibers of $\pi|_Y$ is constant and equal to $\dim Y-\dim x$. (Actually, it is easy to see that
$\dim Y-\dim x=\dim G_\xi$, where $\xi$ represents $x$.) Take a point $y'\in Y$ such that $\codim\overline{\{y'\}}=1$, and set 
$x'=\pi(y')$. We claim that $\dim x'=\dim x-1$. 

Indeed, $\dim x'<\dim x$, because $x'\in\overline{\{x\}}$ and $x'\ne x$. On the other hand,
$$\dim x'=\dim\overline{\{x'\}}\ge\dim(\pi^{-1}(\overline{\{x'\}})\cap Y)-(\dim Y-\dim x)=\dim x-1.$$
\end{proof}

\begin{Rem*} Lemmas~\ref{locallyclosed} and \ref{codim1} are also easy to derive from \cite[Corollary 10.8]{La}. If $\sX$ is of the form $X/G$, Lemmas~\ref{locallyclosed} and \ref{codim1} follow from Rosenlicht's Theorem (\cite{Rosenlicht}, see also exposition in \cite[Section~2.3]{RosenlichtSum}). Essentially, Rosenlicht's Theorem allows us to construct, for an action of a 
group $G$ on an irreducible variety $X$, a rational quotient: a rational dominant map $q:X\to Q$ that identifies $\kk(Q)$
with $\kk(X)^G$. Moreover, it claims that $q$ generically separates $G$-orbits on $X$. This allows us to generically identify
$(X/G)\tp$ with $Q\tp$ and to reduce the above lemmas to corresponding statements for varieties.
\end{Rem*}

By Lemma~\ref{locallyclosed}, a point $x\in\sX\tpl$ determines a reduced locally closed substack 
$\sG_x\subset\sX$: the \emph{residue gerbe} of $x$. 
We denote the embedding $\sG_x\hookrightarrow\sX$ by $j^{(x)}$. 

\begin{Rem} \label{residue}
Let $\kk_x$ be the residue field of $x$ (see \cite[Chapter 11]{La}); since $x\in\sX\tpl$, $\kk_x\supset\kk$ is a finite extension. Suppose that $x$ is defined over $\kk_x$ so that $x$ is the image
of a morphism $$\xi:\spec\kk_x\to\sX.$$ 
Then $\xi$ provides a presentation of $\sG_x$, and 
$\sG_x$ is isomorphic to the classifying stack of $G_\xi$. In particular, quasi-coherent sheaves on $\sG_x$
identify with representations of $G_\xi$.

In general, we can always represent $x$ as the image of $\xi:\spec K\to\sX$, where $K$ is a finite extension of
$\kk_x$. Then the extension of scalars
$$\sG_x\times_{\spec\kk_x}\spec K$$
identifies with the classifying stack of $G_\xi$, so in a sense $\sG_x$ is a $\kk_x$-form of this classifying stack.
\end{Rem}

Let us fix a monotone and comonotone perversity $p:\sX\tp\to\Zet$. Recall that 
$$\cP_{\sX}\subset D^b_{coh}(\sX)$$ is the kernel of the associated t-structure.

\begin{Prop} \label{irre}
For $\F \in D^b_{coh}(\sX)$ the following statements are equivalent:

\begin{enumerate}
\item\label{irre:i} $\F$ is an irreducible object of $\cP_{\sX}$.
\item\label{irre:ii} There exists a point $x\in\sX\tpl$ such that
$p(x)<p(y)$, $\pbar(x)<\pbar(y)$ for any 
$y\in\overline{\{x\}}-\{x\}$ and an irreducible coherent sheaf $\cL$ on $\sG_x$ 
such that
$\F=j^{(x)}_{!*}(\cL[-p(x)])$.
\end{enumerate}
\end{Prop}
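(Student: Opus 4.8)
The plan is to establish the equivalence by combining the structure theory of $j_{!*}$ developed in Theorem~\ref{GM} with a Noetherian-induction argument that reduces the classification of irreducible perverse sheaves to the classification of irreducible objects on the residue gerbes $\sG_x$, which are classifying stacks (up to a form). First I would prove \eqref{irre:ii}$\Rightarrow$\eqref{irre:i}. Since $x$ satisfies the strict monotonicity hypotheses of Theorem~\ref{GM} (on the locally closed substack $\overline{\{x\}}$), $j^{(x)}_{!*}$ is a fully faithful exact-on-the-heart embedding $\cP_{\sG_x}\hookrightarrow\cP_{\overline{\{x\}}}\hookrightarrow\cP_\sX$, and moreover, by Lemma~\ref{orto}, $\F=j^{(x)}_{!*}(\cL[p(x)])$ has no subobjects or quotients supported on $\overline{\{x\}}-\{x\}$. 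So any nonzero subobject or quotient $\G$ of $\F$ must restrict to a nonzero subobject or quotient of $\cL[p(x)]$ on $\sG_x$; irreducibility of $\cL$ as a coherent sheaf (equivalently, of $\cL[p(x)]$ as a perverse sheaf on the gerbe, since the perverse $t$-structure on $\sG_x$ is a shift of the standard one — the only points of $\sG_x\tp$ form a single point) then forces $\G|_{\sG_x}=\F|_{\sG_x}$, whence $\G=\F$ by the no-subquotients-off-$\{x\}$ property applied to $\ker$ or $\coker$ of the inclusion. Here I need the observation that $p|_{\{x\}}$ is the constant $p(x)$, so that the induced perverse $t$-structure on $\sG_x$ is $D^b_{coh}(\sG_x)$ shifted by $-p(x)$, and its heart is $\coh(\sG_x)[p(x)]$; hence irreducible perverse sheaves on $\sG_x$ are exactly $\cL[p(x)]$ for $\cL\in\coh(\sG_x)$ irreducible.

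For the converse \eqref{irre:i}$\Rightarrow$\eqref{irre:ii}, I would argue by Noetherian induction on $\supp\F\subset\sX\tp$. Let $\F$ be irreducible in $\cP_\sX$; by Lemma~\ref{closedimbed} we may assume $\supp\F=\sX\tp$ and $\sX$ is reduced. Pick a generic point $x$; by Lemma~\ref{locallyclosed}, since $x$ is generic in an irreducible component it is defined over $\kbar$, hence $x\in\sX\tpl$ and the residue gerbe $\sG_x$ is an open substack of (the smooth locus of) $\sX$ — more precisely, after shrinking, $\{x\}$ is open and carries the gerbe $\sG_x$, with open embedding $j^{(x)}:\sG_x\hookrightarrow\sX$. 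Now consider $\F|_{\sG_x}\in\cP_{\sG_x}$; it is a shift $\cL_0[p(x)]$ of a coherent sheaf on the gerbe. I claim $\F|_{\sG_x}$ is irreducible: any proper subobject would, via $j^{(x)}_{!*}$ and the adjunction/no-subquotient properties, produce a proper nonzero subobject of $\F$ (using that $j^{(x)}_{!*}(\F|_{\sG_x})$ is a subquotient of $\F$ by the first Lemma after Theorem~\ref{GM}, and $\F$ is irreducible so equals this subquotient once it is nonzero). Thus $\cL_0=\cL$ is irreducible in $\coh(\sG_x)$, and $j^{(x)}_{!*}(\cL[p(x)])$ is a nonzero subquotient of the irreducible $\F$, hence $\F\simeq j^{(x)}_{!*}(\cL[p(x)])$. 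Finally the strict inequalities $p(y)>p(x)$, $\pbar(y)>\pbar(x)$ for $y\in\overline{\{x\}}-\{x\}$ must hold: if not, $j^{(x)}_{!*}$ would not be defined in the required way, but more to the point, the argument of Theorem~\ref{GM} shows $j_{!*}$ lands in $\cP_\sX$ precisely under these inequalities, and one checks directly that without them $\F$ acquires a subobject or quotient supported on $\overline{\{x\}}-\{x\}$, contradicting irreducibility; so these inequalities are forced.

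I expect the main obstacle to be the careful bookkeeping around the case where $x$ is merely defined over $\kbar$ rather than over its residue field $\kk_x$, so that $\sG_x$ is only a $\kk_x$-form of a classifying stack (Remark~\ref{residue}); one must check that $\coh(\sG_x)$ still has a well-behaved notion of irreducible object and that irreducibility is detected after the faithfully flat base change to $K$. Since $\sG_x$ is a gerbe over $\spec\kk_x$ banded by $G_\xi$, its category of coherent sheaves is a $\kk_x$-linear abelian category in which every object has finite length (it is the category of finite-dimensional representations of a $\kk_x$-form of $G_\xi$, or more precisely of an inner form / twisted group algebra), so irreducible objects exist and behave well; this is routine but needs to be said. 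A secondary, minor point is verifying that the induced perversity on $\{x\}$ really is constant and that the perverse heart on $\sG_x$ is the shifted standard heart — this is immediate from the definitions since $\sG_x\tp$ is a single point and $\dim$ is constant on it. Everything else is a formal consequence of Theorem~\ref{GM}, Lemma~\ref{orto}, and the subquotient lemma.
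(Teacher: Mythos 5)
Your direction \eqref{irre:ii}$\Rightarrow$\eqref{irre:i} is essentially the paper's argument (reduce to the closure of $x$, note $\cP_\sU=\coh(\sU)[p(x)]$ on the open substack $\sU$ with $\sU\tp=\{x\}$, and invoke Lemma~\ref{orto} to exclude subobjects and quotients supported off $\{x\}$), and it is fine apart from the slip of calling $\sG_x$ an \emph{open} substack --- it is the maximal reduced closed substack of the open substack $\sU$, and one should restrict to $\sU$ (an exact operation on the hearts) rather than to $\sG_x$.

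The direction \eqref{irre:i}$\Rightarrow$\eqref{irre:ii} has a genuine gap. You assert that the generic point $x$ of $\supp\F$ lies in $\sX\tpl$ ``by Lemma~\ref{locallyclosed}, since $x$ is generic in an irreducible component.'' This is false for stacks: Lemma~\ref{locallyclosed} says $\{x\}$ is locally closed iff $x\in\sX\tpl$, and a generic point of an irreducible stack need not be defined over $\kbar$ (e.g.\ the generic point of $\mathbb{A}^2/\mathbb{G}_m$ with weights $(1,-1)$ is not locally closed, since every nonempty open substack contains closed orbits $xy=c$). Without $x\in\sX\tpl$ there is no residue gerbe, no open substack $\sU$ with $\sU\tp=\{x\}$, and no functor $j^{(x)}_{!*}$, so the rest of your argument cannot start. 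The paper gets around this by reversing the order of deductions: irreducibility plus Lemma~\ref{orto} first gives $i_\sZ^*\F\in D^{p_\sZ,<0}(\sZ)$ and $i_\sZ^!\F\in D^{p_\sZ,>0}(\sZ)$ for every closed $\sZ$ not containing $x$; a Nakayama-lemma argument applied to the coherent sheaf $\mathcal{H}^{p(x)}(\F)$ (nonzero fiber at $x$, hence nonzero fiber at every $y\in\overline{\{x\}}$) then forces $p(x)<p(y)$, and the dual argument forces $\pbar(x)<\pbar(y)$; together these give $\dim(y)<\dim(x)-1$ for all $y\in\overline{\{x\}}-\{x\}$, and \emph{only then} does Lemma~\ref{codim1} (no point of $\sX\tp-\sX\tpl$ can avoid having an adjacent point of codimension one) yield $x\in\sX\tpl$. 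Your plan instead defers the strict inequalities to the very end with ``one checks directly,'' while already using $j^{(x)}_{!*}$ and the subquotient lemma in the middle --- but those tools are only defined under the hypotheses of Theorem~\ref{GM}, i.e.\ under the very inequalities you have not yet proved. You need the concrete Nakayama argument, and you need it before, not after, invoking $j_{!*}$.
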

\begin{proof}  \eqref{irre:ii}$\Rightarrow$\eqref{irre:i}. Suppose that
$\F$ has a proper subobject $\F'$. Then $\supp(\F')\subset\overline{\{x\}}$, so by Lemma~\ref{closedimbed}, $\F'$
can be obtain by a direct image from a closed substack $\sY\subset\sX$ with generic point $x$. Replacing $\sX$
with $\sY$, we may assume that $x\in\sX\tp$ is dense, and therefore open.

Let $j:\sU\hookrightarrow\sX$ be the open substack with $\sU\tp=\{x\}$; its maximal reduced substack is $\sG_x$. 
By Lemma~\ref{closedIC},
$$\F=j_{!*}i_{*}(\cL[-p(x)]),$$
where $i:\sG_x\hookrightarrow\sU$ is the natural closed embedding. Clearly, $\cL[-p(x)]\in\cP_\sU$ is irreducible
(note that $\cP_\sU=\coh(\sU)[-p(x)]$). Now irreducibility of $\F$ follows from Lemma~\ref{orto}.

\eqref{irre:i}$\Rightarrow$\eqref{irre:ii}. Let $x$ be a generic point of $\supp(\F)$. 
Since $\F$ is irreducible, we have 
$$\Hom(\F, i_{\sZ*}(\G))=0,\qquad
\Hom( i_{\sZ*}(\G), \F)=0$$
for any closed substack $i_\sZ:\sZ\hookrightarrow\sX$ not containing $x$ and any $\G\in\cP_\sZ$. 

Thus Lemma~\ref{orto} implies that
$$i_\sZ^*(\F)\in D^{p_\sZ,<0}(\sZ),\qquad 
i_\sZ^!(\F) \in  D^{p_\sZ,> 0}(\sZ).$$
In particular, this shows that $\supp\F$ is irreducible (otherwise we can take $\sZ$ to be an
irreducible component of $\supp\F$ that does not contain $x$). 

Now take $y\in\supp\F=\overline{\{x\}}$, $y\ne x$. Then $p(x)<p(y)$:
otherwise, the coherent sheaf ${\cal H}^{p(x )}(\F)$ has a nonzero fiber at $x $,
but has  zero fiber at $y$, which contradicts the Nakayama Lemma.
A dual argument shows that $\pbar(x)<\pbar(y)$.
In particular, $\dim(y)< \dim(x)-1$. By Lemma~\ref{codim1}, $x\in\sX\tpl$. 
By Lemma~\ref{closedimbed}, $\F$ can be obtain by a direct image from a closed substack $\sY\subset\sX$ with generic point $x$. 

Consider the open substack $j:\sU\hookrightarrow\sY$ such that $\sU\tp=\{x\}$. Then Theorem~\ref{GM} implies that
$$\F=j_{!*}(\cL'[-p(x)])\quad (\cL'\in\coh(\sU)).$$ 
However, $\cL'$ has to be irreducible, and, in particular, it is supported by the reduced substack $\sG_x\subset\sU$.
\end{proof}

\begin{Ex} Oftentimes, Proposition~\ref{irre} can be made more explicit using Remark~\ref{residue}. For instance, suppose $\kk=\kbar$.
Then every point $x\in\sX\tpl$ is defined over $\kk$, and an irreducible coherent sheaf $\cL$ on $\sG_x$ is simply an irreducible representation
of $G_\xi$. Here $\xi:\spec\kk\to\sX$ is the $\kk$-point representing $x$.
\end{Ex}

\begin{Cor}\label{Art} Suppose that  
$\sX$ has finitely many points. If the perversity 
$p:\sX\tp\to\Zet$ is strictly monotone and strictly comonotone, then the  category $\cP_\sX$ is Artinian and Noetherian.
\end{Cor}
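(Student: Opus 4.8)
The plan is to show that $\cP_\sX$ is both Artinian and Noetherian by bounding the length of chains of subobjects. Since $\sX$ has finitely many points and $p$ is strictly monotone and strictly comonotone, the hypotheses of Theorem~\ref{GM} and Proposition~\ref{irre} are automatically satisfied for every point $x\in\sX\tp=\sX\tpl$ (every point is locally closed by Lemma~\ref{locallyclosed}, since there are only finitely many). First I would argue that every object $\F\in\cP_\sX$ has finite length by induction on $\dim(\supp\F)$ (equivalently, on the number of points in $\supp\F$). The base case is when $\supp\F$ is a single closed point $x$; then $\F$ is a coherent sheaf on the residue gerbe $\sG_x$ placed in degree $-p(x)$, i.e. (after extension of scalars, cf.\ Remark~\ref{residue}) a finite-dimensional representation of the finite-type group scheme $G_\xi$, which has finite length as such.

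For the inductive step, pick a generic point $x$ of $\supp\F$ and let $j:\sU\hookrightarrow\overline{\{x\}}$ be the open substack with $\sU\tp=\{x\}$, regarded via $\ibar:\overline{\{x\}}\hookrightarrow\sX$. The key is the distinguished triangle relating $\F$ to $j_{!*}(\F|_\sU)$: concretely, using the perversities $p^\pm=p^\pm_{(\sX-\sU)}$ and the truncation functors $\tau^\pm$ as in the proof of Theorem~\ref{GM}, one gets subquotients of $\F$ supported on the closed complement $\bfZ=\overline{\{x\}}-\{x\}$ (this uses Lemma~\ref{orto}), together with the ``middle'' piece $J_{!*}(\F)=j_{!*}(\F|_\sU)$. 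Thus $\F$ has a finite filtration whose successive quotients are either $j_{!*}$ of an object of $\cP_\sU$, or objects of $\cP_\sX$ supported on $\bfZ$. Objects of the first kind have finite length by Proposition~\ref{irre} together with the base case applied on $\sG_x$ (the functor $j_{!*}$ sends a composition series of $\F|_\sU$ to a filtration of $j_{!*}(\F|_\sU)$ with irreducible subquotients, again by Lemma~\ref{orto}, since $j_{!*}$ of an irreducible is irreducible). Objects of the second kind have finite length by the induction hypothesis, since $\dim\bfZ<\dim(\supp\F)$. Hence $\F$ has finite length, and $\cP_\sX$ is Artinian and Noetherian.

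The main obstacle is making the reduction in the inductive step precise: one must verify that the intermediate cones appearing in the $J_{!*}$ construction genuinely lie in $\cP_\sX$ (so that ``subquotient'' makes sense in an abelian category), that the pieces supported on $\bfZ$ are again perverse coherent sheaves for the induced perversity on a closed substack structure on $\bfZ$ --- here Lemma~\ref{predvar}\eqref{predvar:c} and Lemma~\ref{closedimbed} are used to replace $\sX$ by a genuine closed substack --- and that the filtration is finite, which follows from finiteness of $\supp\F$. One also uses that strict (co)monotonicity is inherited by induced perversities on closed substacks, and that a coherent sheaf on a residue gerbe $\sG_x$, being after base change a finite-dimensional representation of an algebraic group scheme, has a finite composition series; this last point is where the finite-type-over-a-field hypothesis enters.
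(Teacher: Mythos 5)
Your overall strategy --- induction on the support of $\F$ combined with the three-step filtration of $\F$ whose graded pieces are $\tau^+_{<0}\F$, $J_{!*}(\F)=j_{!*}(\F|_\sU)$ and $\tau^-_{>0}\tau^+_{\ge0}\F$ --- is sound, and most of the verifications you list (the intermediate cones lying in $\cP_\sX$, the outer pieces being supported on the closed complement, finite length of coherent sheaves on a one-point substack of finite type over $\kk$) do go through. But one step fails as stated: the claim that ``$j_{!*}$ sends a composition series of $\F|_\sU$ to a filtration of $j_{!*}(\F|_\sU)$ with irreducible subquotients.'' The functor $j_{!*}$ does preserve monomorphisms and epimorphisms (by Lemma~\ref{orto}: the kernel of $j_{!*}A\to j_{!*}B$ restricts to zero on $\sU$ and is a subobject of $j_{!*}A\in D^{p^+,\ge 0}(\sX)$, hence vanishes), so a composition series $\G_0\subset\dots\subset\G_n=\F|_\sU$ does yield a chain of subobjects $j_{!*}\G_0\subset\dots\subset j_{!*}\G_n$. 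But $j_{!*}$ is not exact in the middle: the successive quotient $j_{!*}\G_{i+1}/j_{!*}\G_i$ only surjects onto $j_{!*}(\G_{i+1}/\G_i)$, with a kernel $K_i$ supported on the boundary that is in general nonzero. This is the familiar phenomenon for constructible perverse sheaves (for $j:\mathbb{G}_m\hookrightarrow\mathbb{A}^1$ and $L$ the rank-two unipotent local system, $j_{!*}(L[1])$ has length three, with a skyscraper composition factor sitting in the \emph{middle} of the series), and nothing in the coherent formalism rules it out. So the subquotients of your filtration need not be irreducible, and $j_{!*}$ does not preserve length.

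The gap is repairable inside your own induction: each $K_i$ is an object of $\cP_\sX$ supported on $\supp\F-\{x\}$, hence of finite length by the support-induction hypothesis; combined with an inner induction on $\ell(\F|_\sU)$ this bounds $\ell(j_{!*}(\F|_\sU))$ by $\ell(\F|_\sU)+\sum_i\ell(K_i)<\infty$. You should make this extra layer explicit. It is worth noting that the paper's own (very terse) proof takes a route that sidesteps the non-exactness of $j_{!*}$ entirely: it shows by induction on the number of points that the irreducible perverse coherent sheaves generate $D^b_{coh}(\sX)$ as a triangulated category, and then deduces that every object of $\cP_\sX$ has finite length from subadditivity of length along the long exact sequence of perverse cohomology of a distinguished triangle. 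Your approach, once patched, is more explicit but proves essentially the same dévissage.
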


\begin{proof} In this case, every point $x\in\sX\tp$ is locally closed, so $x\in\sX\tpl$, and $j_{!*}^{(x)}$
is defined (by the hypotheses).
By induction in the number of points, one can deduce that the irreducible
objects generate the triangulated category $D^b_{coh}(\sX)$.
This implies that $\cP_\sX$ is Artinian and Noetherian.
\end{proof}

\begin{Cor} Suppose that $\sX$ has finitely many points and that the perversity $p:\sX\tp\to\Zet$ is strictly monotone and strictly
comonotone. Then classes of irreducible objects $\F\in\cP_\sX$ (described in Proposition~\ref{irre}) form a basis in $K^0(\coh(\sX))$.
\end{Cor}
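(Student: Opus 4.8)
The plan is to pass from $K^0(\coh(\sX))$ to the Grothendieck group of the abelian category $\cP_\sX$ and then invoke the Jordan--H\"older theorem. For the first step, recall that the embedding $\coh(\sX)\hookrightarrow D^b_{coh}(\sX)$ induces an isomorphism on Grothendieck groups, with inverse $[\F]\mapsto\sum_i(-1)^i[{\cal H}^i(\F)]$. Next I would note that the perverse $t$-structure of Theorem~\ref{t_str} is bounded on $D^b_{coh}(\sX)$: exactly as in the proof that it extends to the unbounded derived category, $D^{<-N}_{coh}(\sX)\subset D^{p,\le 0}(\sX)$ and $D^{>N}_{coh}(\sX)\subset D^{p,>0}(\sX)$ for $N\gg 0$, so every $\F\in D^b_{coh}(\sX)$ lies in $D^{p,\le N'}(\sX)\cap D^{p,\ge -N'}(\sX)$ for some $N'$, and therefore has only finitely many nonzero perverse cohomology objects ${}^{p}H^{i}(\F)\in\cP_\sX$. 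Boundedness then yields the usual d\'evissage: $[\F]\mapsto\sum_i(-1)^i[{}^{p}H^{i}(\F)]$ is a two-sided inverse of the natural map $K^0(\cP_\sX)\to K^0(D^b_{coh}(\sX))$, so $K^0(\coh(\sX))\cong K^0(\cP_\sX)$.

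By Corollary~\ref{Art}, the hypotheses that $\sX$ has finitely many points and that $p$ is strictly monotone and strictly comonotone guarantee that $\cP_\sX$ is both Artinian and Noetherian, i.e.\ every object has a finite composition series. In any abelian category all of whose objects have finite length the Jordan--H\"older theorem applies, and assigning to an object the formal sum of its composition factors counted with multiplicity identifies $K^0(\cP_\sX)$ with the free abelian group on the set of isomorphism classes of simple objects of $\cP_\sX$.

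It remains to identify the simple objects, which is the content of Proposition~\ref{irre}. Since $\sX$ has finitely many points, every point of the sober space $\sX\tp$ is locally closed, hence lies in $\sX\tpl$; and strict (co)monotonicity forces $p(x)<p(y)$ and $\pbar(x)<\pbar(y)$ for every $y\in\overline{\{x\}}-\{x\}$. Thus Proposition~\ref{irre} shows that the simple objects of $\cP_\sX$ are exactly the $j^{(x)}_{!*}(\cL[p(x)])$, with $x$ running over $\sX\tp$ and $\cL$ over the irreducible coherent sheaves on the residue gerbe $\sG_x$; distinct pairs $(x,\cL)$ give non-isomorphic simples, because $x$ is recovered as the generic point of the support and $\cL[p(x)]$ as the restriction to $\sG_x$. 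Combining the three steps, the classes $[j^{(x)}_{!*}(\cL[p(x)])]$ form a basis of $K^0(\coh(\sX))$.

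The one point that needs genuine attention is the boundedness of the perverse $t$-structure on $D^b_{coh}(\sX)$ used in the first paragraph, since this is what makes the d\'evissage isomorphism $K^0(\cP_\sX)\cong K^0(D^b_{coh}(\sX))$ legitimate; everything else is a direct combination of Corollary~\ref{Art}, Proposition~\ref{irre}, and the standard $K$-theory of finite-length abelian categories.
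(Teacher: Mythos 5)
Your argument is correct and is essentially the paper's own proof: the paper likewise identifies $K^0(\coh(\sX))$ with $K^0(\cP_\sX)$ via the fact that both the standard and the perverse $t$-structures on $D^b_{coh}(\sX)$ are bounded, and then invokes Corollary~\ref{Art} together with Proposition~\ref{irre}. Your write-up merely makes explicit the boundedness of the perverse $t$-structure and the resulting d\'evissage, which the paper leaves implicit.
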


\begin{proof} The derived category $D^b_{coh}(\sX)$ has two bounded t-structures: the standard t-structure and the perverse t-structure. For the
hearts of these t-structures, we get an identification
$$K^0(\coh(\sX))=K^0(\cP_\sX).$$
Now the claim follows from Corollary~\ref{Art}.
\end{proof}

\begin{Ex}\label{N}
 Let $G$ be a semisimple group over a field of characteristic 0,
or of large finite characteristic, and let $\N\subset G$ be the subvariety of
unipotent elements. Then $G$ acts on $\N$ by conjugation, and this action has
a finite number of orbits. Moreover, dimensions of orbits are known to be even.
Points $x$ of the stack $\N/G$ correspond to $G$-orbits $O\subset\N$, and 
we can define the \emph{middle perversity} $p:(\N/G)\tp$
by 
$$p(x)=-\frac{\dim(O)}{2}.$$
Obviously, $p$ is
strictly monotone and comonotone, hence by Proposition \ref{Art}
the heart of the corresponding $t$-structure is Artinian and Noetherian.
See \cite{me}, \cite{hum} for more information on this example.

%It is shown in \cite{hum} that the irreducible objects of this 
%$t$-structure are closely related to cohomology
%of (tilting) modules over a quantum group at a root of unity. 
%This relation was independently conjectured by Ostrik.
\end{Ex}

\begin{Ex}
Let $G$ be as above and let $Gr$ denote the affine Grassmanian of $G$.
This is an ind-scheme acted upon by the (infinite dimensional) group
scheme $G_O$. The orbits are in bijection with the set of dominant
coweights of $G$. Moreover, it is known that orbits in a given connected
component of $Gr$ have dimension of a fixed parity. Thus the perversity
function
$p(x)= -[\frac{\dim(O)}{2}]$ is monotone and comonotone, and
 (a straightforward generalization of) Corollary \ref{Art} applies.
See \cite{BFG} for more information on this example.
\end{Ex}

\bibliographystyle{abbrv}
\bibliography{izvrat}
\end{document}